\subjclass[2010]{30L05,46B85,37F35}
\keywords{Bi-Lipschitz embeddings, Hausdorff dimension, ultrametrics,  Dvoretzky's theorem}
\newcommand{\eqdef}{\stackrel{\mathrm{def}}{=}}
\renewcommand{\le}{\leqslant}
\renewcommand{\hat}{\widehat}
\renewcommand{\ge}{\geqslant}
\renewcommand{\leq}{\leqslant}
\renewcommand{\setminus}{\smallsetminus}
\renewcommand{\subset}{\subseteq}
\DeclareMathOperator{\lca}{lca}
  \DeclareMathOperator{\diam}{diam}
\newcommand{\N}{\mathbb{N}}
\newcommand{\E}{\mathbb{E}}
\newcommand{\Z}{\mathbb Z}
\newcommand{\p}{\mathbf{p}}
\newcommand{\F}{\mathcal{F}}
\newcommand{\G}{\mathcal{G}}
\renewcommand{\L}{\mathcal L}
\newcommand{\U}{\mathscr{U}}
\DeclareMathOperator{\depth}{depth}
\newcommand{\bd}{\overline{d}}
\newcommand{\dc}{\mathbf{c}}
\renewcommand{\P}{\mathscr P}
\newcommand{\e}{\varepsilon}
\renewcommand{\S}{\mathcal{S}}
\theoremstyle{plain}
  \newtheorem{lemma}{Lemma}[section]
  \newtheorem{theorem}[lemma]{Theorem}
  \newtheorem{corollary}[lemma]{Corollary}
  \theoremstyle{definition}
   \newtheorem{definition}[lemma]{Definition}
    \newtheorem{question}[lemma]{Question}
  \newtheorem{remark}[lemma]{Remark}
\begin{document}

\title{Ultrametric subsets with large Hausdorff dimension}\thanks{M. M. was partially supported by ISF grants 221/07 and  93/11,
BSF grants 2006009 and 2010021, and a gift from Cisco Research Center. A. N.
was partially supported by NSF grant CCF-0832795, BSF grants
2006009 and 2010021, and the Packard Foundation.
Part of this work was completed when
M.~M. was visiting Microsoft Research and University of Washington, and
A.~N. was visiting the Discrete Analysis program at the Isaac Newton Institute for Mathematical Sciences and the Quantitative Geometry program at the Mathematical Sciences Research Institute.}
\author{Manor Mendel}
\address{Mathematics and Computer Science Department, Open University of Israel, 1 University Road, P.O. Box 808
Raanana 43107, Israel} \email{mendelma@gmail.com}

\author{Assaf Naor}
\address{Courant Institute, New York University, 251 Mercer Street, New York NY 10012, USA}
\email{naor@cims.nyu.edu}

\begin{abstract}
It is shown that for every $\e\in (0,1)$, every compact metric space
$(X,d)$ has a compact subset $S\subseteq X$ that embeds into an
ultrametric space with distortion $O(1/\e)$, and $$\dim_H(S)\ge
(1-\e)\dim_H(X),$$ where $\dim_H(\cdot)$ denotes Hausdorff
dimension. 
The above $O(1/\e)$ distortion estimate is shown to be sharp via a
construction based on sequences of expander graphs.
\end{abstract}

\maketitle

\setcounter{tocdepth}{4} \tableofcontents

\section{Introduction}

Given $D\ge 1$, a metric space $(X,d_X)$ is said to embed with
distortion $D$ into a metric space $(Y,d_Y)$ if there exists $f:X\to
Y$ and $\lambda>0$ such that for all $x,y\in X$ we have
\begin{equation}\label{eq:def embedding}
\lambda d_X(x,y)\le d_Y(f(x),f(y))\le D\lambda d_X(x,y).
\end{equation}
Note that when $Y$ is a Banach space the scaling factor $\lambda$
can be dropped in the definition~\eqref{eq:def embedding}.

Answering positively a conjecture of Grothendieck~\cite{Gro53-dvo},
Dvoretzky proved~\cite{Dvo60} that for every $k\in \mathbb N$ and
$D>1$ there exists $n=n(k,D)\in \mathbb N$ such that every
$n$-dimensional normed space has a $k$-dimensional linear subspace
that embeds into Hilbert space with distortion $D$;
see~\cite{Mil71,MS99,Sch06} for the best known bounds on $n(k,D)$.

Bourgain, Figiel and Milman~\cite{BFM86} studied the following
problem as a natural nonlinear variant of Dvoretzky's theorem: given
$n\in \N$ and $D>1$, what is the largest $m\in \N$ such that {\em
any} finite metric space $(X,d)$ of cardinality $n$ has a subset
$S\subseteq X$ with $|S|\ge m$ such that the metric space $(S,d)$
embeds with distortion $D$ into Hilbert space? Denote this value of
$m$ by $R(n,D)$. Bourgain-Figiel-Milman proved~\cite{BFM86} that for
all $D>1$ there exists $c(D)\in (0,\infty)$ such that $R(n,D)\ge
c(D)\log n$, and that $R(n,1.023)=O(\log n)$. Following several
investigations~\cite{KKR94,BKRS00,BBM} that were motivated by
algorithmic applications, a more complete description of
the Bourgain-Figiel-Milman phenomenon was obtained in~\cite{BLMN05}.

\begin{theorem}[\cite{BLMN05}]\label{thm:BLMN phase}
For  $D\in (1,\infty)$ there exist $c(D),c'(D)\in (0,\infty)$ and
$\delta(D),\delta'(D)\in (0,1)$ such that for every $n\in \N$,
\begin{itemize}
\item if $D\in (1,2)$ then $c(D)\log n\le R(n,D)\le c'(D)\log
n$,
\item if $D\in (2,\infty)$ then $n^{1-\delta(D)}\le R(n,D)\le n^{1-\delta'(D)}$.
\end{itemize}
\end{theorem}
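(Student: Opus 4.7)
The plan is to prove the matching upper and lower bounds in the two regimes by a unified scheme: on the lower side, every finite ultrametric embeds isometrically into Hilbert space, so it suffices to find large subsets admitting small-distortion \emph{ultrametric} embeddings; on the upper side, I would exhibit hard examples — expanders for the low-distortion regime, and hierarchically self-similar graphs (Laakso-type or diamond graphs) for the high-distortion regime. The phase transition at $D=2$ is then not accidental: it reflects the threshold at which a recursive ball-partition can be performed with a genuine gap between a ball and its complement.

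For the lower bound I would use a recursive ball-partitioning lemma. Given an $n$-point metric space $(X,d)$ and a target distortion $D>1$, I search for a center $x$ and radius $r>0$ such that the ball $B=B(x,r)$ and its complement $X\setminus B$ both contain a large number of points, and simultaneously $\min_{y\in B,\,z\in X\setminus B} d(y,z)\ge 2r/D$. Under this gap condition one builds a $D$-distorted ultrametric on the union $S_B\cup S_{X\setminus B}$ by placing a root of diameter $\approx Dr$ above the two recursive subtrees. When $D>2$ there is slack in the gap condition, so one can afford to discard a $\beta$-fraction of points at each step for some $\beta=\beta(D)$; the resulting recurrence $f(n)\ge f(\beta n)+f((1-\beta)n)$ gives $f(n)\ge n^{1-\delta(D)}$ with $\delta(D)\to 0$ as $D\to\infty$. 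When $D<2$ the gap condition is tight: one cannot guarantee a balanced split and must discard all but a ball at each step, producing only a chain of $\Omega(\log n)$ nested balls, which recovers the Bourgain--Figiel--Milman bound $R(n,D)\ge c(D)\log n$.

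For the upper bound when $D\in(1,2)$ I would take a sequence of bounded-degree expanders $G_n$ on $n$ vertices in the shortest-path metric. By the Linial--London--Rabinovich / Matou\v sek argument, a Poincar\'e inequality on the expander forces every subset of size $\Omega(n)$ to have Hilbert distortion $\Omega(\log n)$, and a recursive application (throwing away the best third of the embedding and recursing) shows that even much smaller subsets of the expander have distortion $\ge \log m$ up to constants; hence distortion $D<2$ in Hilbert space is incompatible with subset size $m$ growing faster than $\log n$. For $D\in(2,\infty)$ I would use a recursively constructed diamond/Laakso family $L_t$ of size $n=2^{\Theta(t)}$ in which every subset of size $n^{1-\delta'(D)}$ still contains a diamond obstruction at $\Omega(t)$ hierarchical scales; each such obstruction contributes a factor $2-o(1)$ to the distortion, and these compound via a Markov-type / Poincar\'e-type inequality to force the total Hilbert distortion above $D$.

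The main obstacle I expect is the quantitative ball-partitioning lemma in the regime $D>2$: one has to show that, for every finite metric space, some center and radius simultaneously yield both a useful gap $2r/D$ between $B$ and its complement and a sufficiently balanced split $|B|, |X\setminus B|$ to support the desired recurrence. Guessing $r$ from the distribution function of $d(x,\cdot)$ for a well-chosen $x$, and tracking how balance trades against gap, is what pins down the exponent $\delta(D)$; getting the correct dependence (so that the upper and lower bounds match up to the form $n^{1-\Theta(1/D)}$) is the technical heart of the argument. The matching lower-bound analysis of the Laakso family, which requires a Poincar\'e inequality robust to passing to subsets, is the corresponding difficulty on the upper-bound side.
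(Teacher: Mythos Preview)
This theorem is not proved in the present paper: it is quoted from~\cite{BLMN05} as background (note the attribution in the theorem header), and the paper gives no argument for it. So there is no ``paper's own proof'' to compare your proposal against.

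That said, since the paper does describe the relevant constructions from~\cite{BLMN05} in Section~\ref{sec:ub}, it is worth noting that your upper-bound examples are swapped. For $D\in(2,\infty)$ the hard examples in~\cite{BLMN05} are \emph{expanders}: as recalled in Section~\ref{sec:expander}, subsets of an expander of size $>n^{1-\e}$ incur Hilbert distortion $\Omega(1/\e)$, which gives $R(n,D)\le n^{1-\delta'(D)}$. For $D\in(1,2)$ the hard examples are the $\{1,2\}$-valued metrics coming from $G(n,1/2)$ random graphs (Section~\ref{sec:random graph}): any subset of size exceeding roughly $\log n$ already incurs distortion $\ge 2-\delta$. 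Your expander argument for $D<2$ does not work as stated: a Poincar\'e inequality on an expander controls the distortion of \emph{large} subsets, but a subset of size, say, $n^{1/2}$ can easily be a path and embed isometrically into Hilbert space, so expanders do not force distortion $\ge D$ on all subsets of size $\gg\log n$. Laakso/diamond graphs do not appear in~\cite{BLMN05} for this theorem.

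Your lower-bound sketch (recursive ball partitioning yielding ultrametric subsets, with the $D>2$ threshold arising from the gap condition $d(B,X\setminus B)\ge 2r/D$) is in the right spirit and is indeed the mechanism used in~\cite{BLMN05}.
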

Highlighting the case of large $D$, which is most relevant for
applications, we have the following theorem.

\begin{theorem}[\cite{BLMN05,MN07,NT-fragmentations}]\label{thm:1/eps}
For every $\e\in (0,1)$ and $n\in \N$, any $n$-point metric space
$(X,d)$ has a subset $S\subseteq X$ with $|S|\ge n^{1-\e}$ that
embeds into an ultrametric space with distortion $2e/\e$. On the
other hand, there exists a universal constant $c>0$ with the
following property. For every $n\in \N$ there is an $n$-point metric
space $X_n$ such that for every $\e\in (0,1)$ all subsets
$Y\subseteq X$ with $|Y|\ge n^{1-\e}$ incur distortion at least
$c/\e$ in any embedding into Hilbert space.
\end{theorem}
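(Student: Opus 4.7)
My plan is to prove the two halves of the statement separately: the existence of a low-distortion ultrametric subset, and the matching expander-based lower bound.

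For the existence, I would argue by induction on $n=|X|$. Given $(X,d)$ of diameter $\Delta$, set $K=2e/\e$ and fix any center $x\in X$. Examining the ball-size profile $r\mapsto |B(x,r)|$ at geometrically-spaced scales in $[\Delta/K,\Delta/e]$ with ratio $e$, a pigeonhole argument yields a radius $\rho$ at which the ball is \emph{padded}: $|B(x,e\rho)|\le |B(x,\rho)|^{1/(1-\e)}$, i.e., the ball does not grow too much upon dilation by $e$. If no such radius existed, iterating the bad case would push $|B(x,\cdot)|$ above $n$. Apply the induction hypothesis separately to $B(x,\rho)$ and to $X\setminus B(x,\rho)$, obtaining ultrametric subsets $S_1,S_2$ of sizes $\ge |B(x,\rho)|^{1-\e}$ and $\ge |X\setminus B(x,\rho)|^{1-\e}$, each realized with distortion $\le K$. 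Subadditivity of $t\mapsto t^{1-\e}$ on $[0,\infty)$ gives $|S_1\cup S_2|\ge n^{1-\e}$. Gluing the two ultrametrics under a common root labeled $\Delta$ preserves the internal distortions; the inter-cluster ratios lie in $[1,\Delta/(e\rho)]=[1,K/e]=[1,2]$ by the padding gap, so the overall distortion remains $K=2e/\e$.

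For the lower bound, I would take $X_n$ to be the shortest-path metric on a spectral expander $G_n$ with $n$ vertices, degree $d$, and spectral gap $\lambda=\Omega(1)$. The key tool is a Poincar\'e inequality in Hilbert space: for every $f:S\to\ell_2$ defined on a subset $S\subseteq V(G_n)$ with induced spectral gap $\lambda_S$,
\[
\frac{1}{|S|^2}\sum_{x,y\in S}\|f(x)-f(y)\|^2 \le \frac{C}{\lambda_S|E(S)|}\sum_{\{u,v\}\in E(S)}\|f(u)-f(v)\|^2.
\]
Since expanders remain expanders after deleting a small fraction of vertices, any $S$ with $|S|\ge n^{1-\e}$ satisfies $\lambda_S=\Omega(\lambda)$ and $|E(S)|=\Theta(d|S|)$. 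The left-hand side is lower-bounded by $\Omega(\mathrm{diam}(S,d_{G_n})^2)$ times the squared lower Lipschitz constant of $f$: by the volume estimate a ball of radius $r$ in $G_n$ contains at most $d^r$ points, so $|S|\ge n^{1-\e}$ forces $\mathrm{diam}(S,d_{G_n})\ge (1-\e)\log n/\log d$. The right-hand side is bounded by the squared upper Lipschitz constant since edges of the induced subgraph have $G_n$-distance $1$. Combining these, the distortion $D$ of any Hilbert embedding of $(S,d_{G_n})$ must satisfy $D=\Omega(\log n/\log d)$; taking $d=n^{\e}$ yields $D=\Omega(1/\e)$.

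The main obstacle is extracting the sharp constant $2e/\e$ in the upper bound rather than a weaker $O(1)/\e$. The factor $2e$ arises as the optimizer of a continuous relaxation of the pigeonhole bookkeeping, and locking it in requires the padding ratio to be exactly $e$ together with a telescoping estimate that does not accumulate constants across recursion levels. A secondary technical difficulty on the lower-bound side is that the Poincar\'e argument must be uniform over \emph{every} subset of size $\ge n^{1-\e}$ rather than merely a typical one; this relies on robustness of expander spectral gaps under arbitrary small-fraction vertex deletion.
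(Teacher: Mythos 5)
Your proposal does not match the paper's route, and both halves contain genuine gaps.

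For the upper bound, the gluing step is broken as stated. You partition $X$ into $B(x,\rho)$ and $X\setminus B(x,\rho)$ and claim the inter-cluster ratio lies in $[1,\Delta/(e\rho)]$, but there is no geometric gap between these two pieces: $y\in B(x,\rho)$ with $d(x,y)=\rho$ and $z\notin B(x,\rho)$ with $d(x,z)=\rho+\epsilon$ can be at distance $\epsilon$, so gluing them under a common root at scale $\Delta$ has unbounded distortion. To get a gap you would have to recurse on $B(x,\rho)$ and $X\setminus B(x,e\rho)$ and discard the annulus, but then the subadditivity estimate no longer telescopes cleanly, and the padding condition $|B(x,e\rho)|\le|B(x,\rho)|^{1/(1-\e)}$ is exactly what is needed to repair the bookkeeping. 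More importantly, even after that repair this deterministic, center-and-pigeonhole recursion is the argument of~\cite{BLMN05}, and it provably yields distortion $O(\e^{-1}\log(2/\e))$ rather than $2e/\e$: the extra $\log(2/\e)$ factor is precisely the number of geometric scales in $[\Delta/K,\Delta/e]$ over which you pigeonhole. The sharp constant $2e/\e$ is obtained in the paper (Theorem~\ref{thm:two weight}, Section~\ref{sec:NT}) by an entirely different, probabilistic mechanism due to~\cite{NT-fragmentations}: one fixes random radii $r_0\ge r_1\ge\cdots$ satisfying $\Pr[r_n<r\le r_n+2r_{n-1}/D]\le\e$ for every $r$, iterates a padded-decomposition lemma to produce a random ultrametric subset, and then uses Jensen's inequality to show the expected ``weight'' of the surviving set meets the $n^{1-\e}$ threshold. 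The distortion $\frac{2}{\e(1-\e)^{(1-\e)/\e}}\le 2e/\e$ falls out of optimizing the distribution of the random radii, and no pigeonhole over scales occurs. Your ``continuous relaxation of the pigeonhole bookkeeping'' intuition does not close this gap.

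For the lower bound, the step ``expanders remain expanders after deleting a small fraction of vertices, so any $S$ with $|S|\ge n^{1-\e}$ satisfies $\lambda_S=\Omega(\lambda)$'' is false, and it is not a small technicality but the crux. A set of $n^{1-\e}$ vertices is an $n^{-\e}$-fraction of the graph; you are deleting almost everything, not a small fraction. The induced subgraph on such a set can be an independent set (every bounded-degree graph contains independent sets of linear size, let alone of size $n^{1-\e}$), in which case $E(S)=\emptyset$ and your Poincar\'e inequality is vacuous. The actual argument of~\cite{BLMN05} does not pass to the induced subgraph; it applies the Poincar\'e inequality on the ambient expander $G_n$ and combines it with the volume bound $|B(x,r)|\le d^r$ to extract, for an arbitrary large $S$, a contradiction between the forced diameter $\gtrsim(1-\e)\log_d n$ of $S$ and the edge energy on all of $G_n$. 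This requires extending or comparing a hypothetical low-distortion embedding of $S$ to a map on all of $V(G_n)$, which is precisely the nontrivial step your sketch elides. As written your lower-bound argument does not go through for adversarially chosen $S$.
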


Recall that a metric space $(U,\rho)$ is called an ultrametric space
if for every $x,y,z\in U$ we have $\rho(x,y)\le
\max\left\{\rho(x,z),\rho(z,y)\right\}$. Any separable ultrametric
space admits an isometric embedding into Hilbert space~\cite{VT79}.
Hence the subset $S$ from Theorem~\ref{thm:1/eps} also embeds with
the stated distortion into Hilbert space, and therefore
Theorem~\ref{thm:1/eps} fits into the Bourgain-Figiel-Milman
framework. Note, however, that the stronger statement that $S$
embeds into an ultrametric space is needed for the applications
in~\cite{BLMN05,MN07}, and that the matching lower bound in
Theorem~\ref{thm:1/eps} is for the weaker requirement of
embeddability into Hilbert space.
Thus, a byproduct of Theorem~\ref{thm:1/eps} is the assertion that, in general, the best way (up to constant factors) to find a large approximately Euclidean subset is to actually find a subset satisfying the more stringent requirement of being almost ultrametric.
The existence of the metric spaces
$\{X_n\}_{n=1}^\infty$ from Theorem~\ref{thm:1/eps} was established
in~\cite{BLMN05}. The estimate $2e/\e$ on the ultrametric distortion
of the subset $S$ from Theorem~\ref{thm:1/eps} is due
to~\cite{NT-fragmentations}, improving by a constant factor over the
bound from~\cite{MN07}, which itself improves (in an asymptotically
optimal way) on the distortion bound of
$O\left(\e^{-1}\log(2/\e)\right)$ from~\cite{BLMN05}.

In what follows, $\dim_H(X)$ denotes the Hausdorff dimension of a
metric space $X$. Inspired by the above theorems, Terence Tao
proposed (unpublished, 2006) another natural variant of the
nonlinear Dvoretzky problem:  one can keep the
statement of Dvoretzky's theorem unchanged in the context of general
metric spaces, while interpreting the notion of dimension in the
appropriate category. Thus one arrives at the following question.

\begin{question}[The nonlinear Dvoretzky problem for Hausdorff dimension]
\label{Q:tao}
Given $\alpha>0$ and $D>1$, what is the supremum over
those $\beta\ge 0$ with the following property. Every compact metric
space $X$ with $\dim_H(X)\ge \alpha$ has a subset $S\subseteq X$
with $\dim_H(S)\ge \beta$ that embeds into Hilbert space with
distortion $D$?
\end{question}
The restriction of Question~\ref{Q:tao} to compact metric spaces is
not severe. For example, if $X$ is complete and separable then one
can first pass to a compact subset of $X$ with the same Hausdorff
dimension, and even the completeness of $X$ can be replaced by
weaker assumptions; see~\cite{Car67,How95}. We will not address
this issue here and restrict our discussion to compact metric
spaces, where the crucial subtleties of the problem are already present.

Our purpose here  is to provide
answers to Question~\ref{Q:tao} in various distortion regimes, the main result being
the following theorem.

\begin{theorem}\label{thm:hausdorff into}
There exists a universal constant $C\in (0,\infty)$ such that for
every $\e\in (0,1)$ and $\alpha\in (0,\infty)$, every compact metric space $X$ with $\dim_H(X)\ge \alpha$ has a closed
subset $S\subseteq X$ with $\dim_H(S)\ge (1-\e)\alpha$ that
embeds with distortion $C/\e$ into an ultrametric space. In the
reverse direction, there is a universal constant $c>0$ such that for
every $\alpha>0$ there exists a compact metric space $X_\alpha$ with
$\dim_H(X_\alpha)= \alpha$ such that if $S\subseteq X$ satisfies
$\dim_H(S)\ge (1-\e)\alpha$ then $S$ incurs distortion at least
$c/\e$ in any embedding into Hilbert space.
\end{theorem}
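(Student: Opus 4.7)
The plan is to prove the two directions separately. For the positive (embedding) statement, I would reduce the problem to the finite setting via Frostman's lemma and a multi-scale iterative construction; for the negative (sharpness) statement, I would glue together the expander-based examples underlying Theorem~\ref{thm:1/eps} into a self-similar fractal of the prescribed Hausdorff dimension.

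For the upper bound on distortion, fix $\beta\in ((1-\e)\alpha,\alpha)$. By Frostman's lemma applied to the compact metric space $X$, there is a compactly supported Borel probability measure $\mu$ on $X$ and a constant $C_0>0$ such that $\mu(B(x,r))\le C_0 r^{\beta}$ for every $x\in X$ and every $r>0$; the support $X'$ of $\mu$ still satisfies $\dim_H(X')\ge \beta$. I would then construct a closed subset $S\subseteq X'$ as the intersection of a nested sequence of closed sets $X'=S_0\supseteq S_1\supseteq S_2\supseteq\cdots$, where $S_{k+1}$ is obtained by partitioning $S_k$ into ``clusters'' of diameter roughly $\rho^{k+1}\diam(X)$ (for a small parameter $\rho=\rho(\e)$) and selecting a sub-collection using a finite-dimensional selection lemma in the spirit of Theorem~\ref{thm:1/eps}. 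The selection at each scale must be done in a ``measure-weighted'' way: rather than keeping an $n^{1-\e}$ fraction of points as in Theorem~\ref{thm:1/eps}, one keeps clusters of combined $\mu$-mass at least a $\mu(S_k)^{\e}$-fraction of the total, while maintaining that the $\rho^{k+1}$-separated centers of the retained clusters form a hierarchically well-separated tree (HST) of distortion $C/\e$. Pushing $\mu$ forward to a measure $\nu$ on $S=\bigcap_k S_k$ via the tree structure, the Frostman-type bound on $\mu$ together with the geometric scale spacing yields $\nu(B(x,r))\le C' r^{(1-\e)\alpha}$ on $S$, and the mass distribution principle then gives $\dim_H(S)\ge (1-\e)\alpha$.

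For the lower bound, I would use the expander-based $n$-point spaces $X_n$ from Theorem~\ref{thm:1/eps}. Given $\alpha>0$, choose a rapidly growing sequence $\{n_k\}_{k=1}^\infty\subset\N$ and radii $\{r_k\}_{k=1}^\infty$ satisfying $n_k r_k^{\alpha}=1$, and build a compact space $X_\alpha$ as a ``fractal product'' in which level $k$ is (isometric to a rescaling of) $X_{n_k}$ with all distances multiplied by $r_1 r_2\cdots r_{k-1}$: each point at level $k-1$ is blown up into a scaled copy of $X_{n_k}$ of diameter $r_1\cdots r_{k-1}$ with minimum interpoint distance comparable to $r_1\cdots r_k$. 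A standard mass-distribution argument shows $\dim_H(X_\alpha)=\alpha$. Now if $S\subseteq X_\alpha$ has $\dim_H(S)\ge (1-\e)\alpha$, a pigeonhole argument gives infinitely many levels $k$ at which $S$ meets at least $n_k^{1-O(\e)}$ of the level-$k$ copies of $X_{n_k}$; applying the Hilbert-space distortion lower bound of Theorem~\ref{thm:1/eps} to any one such level produces a sub-configuration of $X_{n_k}$ on which any embedding of $S$ into Hilbert space incurs distortion at least $c/\e$.

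The main obstacle is the positive direction, specifically the passage from the discrete Theorem~\ref{thm:1/eps} to the continuous Hausdorff-dimension setting while preserving both the $O(1/\e)$ distortion bound and the $(1-\e)\alpha$ dimension bound. The naive approach of applying Theorem~\ref{thm:1/eps} scale-by-scale loses a constant factor at each scale, which compounds to destroy the dimension estimate; one must therefore prove a ``weighted'' version of the selection lemma that trades cardinality for $\mu$-mass and is iterable across infinitely many scales. Getting these two parameters to line up — so that the geometric scale factor $\rho$ and the per-scale mass loss $\mu(S_k)^\e$ combine to yield a Frostman measure with exponent exactly $(1-\e)\alpha$ — is the technical heart of the argument. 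The negative direction, by contrast, is largely a matter of carefully executing the fractal gluing and verifying that the expander distortion lower bound survives the pigeonhole reduction.
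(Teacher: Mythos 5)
Your negative direction essentially matches the paper's: glue expander-based finite spaces from~\cite{BLMN05} into a tree-like fractal $X_\alpha$ with $\dim_H(X_\alpha)=\alpha$, and use a pigeonhole/projection argument to show that any subset of dimension $\ge(1-\e)\alpha$ must contain a dilated copy of a large sub-configuration of some $X_{n_k}$, inheriting the expander distortion lower bound. The paper packages this as Theorem~\ref{lem:ub-second-lem} and Corollary~\ref{cor:ub-first-lem} in terms of ``hereditary dilation-invariant metric properties,'' and actually uses a single fixed expander repeated at every scale, but your variant with varying $n_k$ works for the same reasons. That direction is sound.

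Your positive direction has the right outline---Frostman's lemma, a multi-scale cluster hierarchy, and a ``weighted'' replacement for Theorem~\ref{thm:1/eps}---but it stops precisely at the point where the paper's real work begins, and the pieces you do specify are not quite right. First, the weighted selection statement you posit (``keep clusters of combined $\mu$-mass at least a $\mu(S_k)^\e$-fraction of the total'') is not the one that can be iterated. What works is a \emph{power-sum} inequality: for any weight $w$, there is a subset $S$ embedding into ultrametric with distortion $D$ satisfying $\sum_{x\in S}w(x)^{\theta(D)}\ge\bigl(\sum_{x\in X}w(x)\bigr)^{\theta(D)}$ (the paper's Theorem~\ref{lem:theta(D)}); the sublinear exponent is exactly what makes the estimate composable across scales, whereas a multiplicative mass-fraction loss compounds fatally, as you yourself note. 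Second, even with the correct one-scale lemma, the coordination across scales is not routine: the paper needs the whole fragmentation-map apparatus, the subadditive weighted trees with designated children, and the iterated H\"older argument of Lemma~\ref{lem:holder} to align the pruning at different levels so that the per-level losses telescope into a single exponent $(1-\e)$. Asserting that ``one must prove a weighted selection lemma that is iterable'' names the gap rather than filling it. Third, the paper does not construct a Frostman measure on $S$ at all; it proves the covering inequality $\sum_i\mu(B(x_i,c_\e r_i))^{1-\e}\ge\mu(X)^{1-\e}$ for \emph{all} covers of $S$ (Theorem~\ref{thm:measure}), which is the Hausdorff-content statement directly. Your plan to push $\mu$ forward along the cluster tree and invoke the mass distribution principle would additionally need to pass from the cluster-aligned scales to arbitrary balls, a step you do not address. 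Finally, the sharp $O(1/\e)$ distortion is not obtained in a single pass: the paper first builds a lacunary fragmentation map with distortion $e^{O(1/\e^2)}$ and then runs a second, separation-based pruning (Lemma~\ref{lem:2.5}) that reapplies the weighted Dvoretzky theorem to bring the distortion down to $O(1/\e)$. Your single-pass scheme would at best recover the weaker exponential bound.
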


The construction of the spaces
$X_\alpha$ from Theorem~\ref{thm:hausdorff into} builds  on
the examples of~\cite{BLMN05}, which are based on expander
graphs. The limiting spaces $X_\alpha$ obtained this way can therefore be
called ``expander fractals"; their construction is discussed in
Section~\ref{sec:ub}.

Our main new contribution leading to
Theorem~\ref{thm:hausdorff into} is the following structural result for general metric measure spaces. In what follows, by a metric measure space $(X,d,\mu)$ we mean a
compact metric space $(X,d)$,  equipped with a Borel measure
$\mu$ such that $\mu(X)<\infty$. For $r>0$ and $x\in X$, the corresponding closed ball is denoted $ B(x,r) = \{ y \in X:
d(x,y) \le r \}.$

\begin{theorem}\label{thm:measure}
For every $\e\in (0,1)$ there exists $c_\e\in (0,\infty)$ with the following
property. Every metric measure space $(X,d,\mu)$ has a closed subset
$S\subseteq X$ such that $(S,d)$ embeds into an ultrametric space
with distortion $9/\e$, and for every $\{x_i\}_{i\in I}\subseteq X$
and $\{r_i\}_{i\in I}\subseteq [0,\infty)$ such that the balls
$\{B(x_i,r_i)\}_{i\in I}$ cover $S$, i.e.,
\begin{equation}\label{eq:cover}
\bigcup_{i\in I} B(x_i,r_i)\supseteq S,
\end{equation}
we have
\begin{equation}\label{eq:power sum}
\sum_{i\in I} \mu(B(x_i,c_\e r_i))^{1-\e}\ge \mu(X)^{1-\e}.
\end{equation}
\end{theorem}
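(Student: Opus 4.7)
Proof proposal. The plan is to construct $S$ as the set of leaves of a recursively built partition tree of $X$ and equip $S$ with the associated hierarchically well-separated ultrametric; the covering estimate~\eqref{eq:power sum} will then follow by induction on the depth of the tree, with the scale-selection at each level being the technical heart of the argument.

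Recursive construction: at a node labeled by $(Y,\Delta)$ with $\diam(Y)\le \Delta$ (starting from $(X,\diam(X))$), invoke a scale-selection lemma to find $y_0\in Y$ and a radius $r$ in a geometric window around $\e\Delta/9$ for which the annular shell $B(y_0,Kr)\setminus B(y_0,r)$ is $\mu$-light, where $K$ is a constant coupled to $c_\e$. Declare two children $Y_1\eqdef Y\cap B(y_0,r)$ at scale $2r\le \e\Delta/4$ and $Y_2\eqdef Y\setminus B(y_0,Kr)$ at scale $\Delta$, and recurse until $\mu(Y)=0$ or $\diam(Y)=0$. Define the ultrametric distance between two leaves to be the scale of their lowest common ancestor; sibling regions being separated by $\ge (K-1)r\ge (\e/9)\Delta$ (after a suitable tuning of $K$) gives distortion $\le 9/\e$.

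Covering inequality: given a ball cover $\{B(x_i,r_i)\}_{i\in I}$ of $S$ and a node $(Y,\Delta)$, classify each cover ball as \emph{small} (its $c_\e$-enlargement $B(x_i,c_\e r_i)$ lies in a single child's region) or \emph{large} (its $c_\e$-enlargement contains $Y$), with $c_\e$ chosen large enough to make this dichotomy exhaustive. Small balls charge to the inductive bound for the unique child they hit, large balls each individually supply $\mu(Y)^{1-\e}$, and the passage from children to parent relies on the subadditivity of $t\mapsto t^{1-\e}$:
\[
\mu(Y_1)^{1-\e}+\mu(Y_2)^{1-\e}\ge \bigl(\mu(Y_1)+\mu(Y_2)\bigr)^{1-\e}\ge \mu(Y)^{1-\e},
\]
whose second inequality requires the annular gap $Y\setminus(Y_1\cup Y_2)$ to carry negligible $\mu$-mass. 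This is precisely what the scale-selection lemma is to guarantee, via a pigeonhole over the $\sim 1/\e$ disjoint subshells fitting inside $[r,Kr]$.

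The principal obstacle is the scale-selection lemma itself: simultaneously enforcing (i) that the shell be thin enough for the $(1-\e)$-power loss to be absorbed without spoiling the distortion constant $9$, (ii) that the radius lie in the narrow window governing the $9/\e$ scale-decay, and (iii) that no ``medium'' cover ball exists whose $c_\e$-enlargement straddles both children yet fails to contain $Y$. Enforcing (iii) will likely force $c_\e$ to grow super-polynomially in $1/\e$ (independently of the distortion $9/\e$), or demand a more intricate multi-child split absorbing intermediate-radius cover balls into a dedicated child, in the spirit of the fragmentation constructions of~\cite{NT-fragmentations}. Coordinating these choices with the termination logic when $\mu(Y)$ falls below an $\e$-dependent threshold is where the bulk of the technical work will sit.
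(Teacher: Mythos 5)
Your construction takes a genuinely different route from the paper, and it has a gap in the covering inequality that I do not think can be repaired within the framework you propose. The chain
\[
\mu(Y_1)^{1-\e}+\mu(Y_2)^{1-\e}\ \ge\ \bigl(\mu(Y_1)+\mu(Y_2)\bigr)^{1-\e}\ \ge\ \mu(X)^{1-\e}
\]
relies in the second step on $\mu(Y_1)+\mu(Y_2)\ge\mu(Y)$, i.e.\ on the annulus $B(y_0,Kr)\setminus B(y_0,r)$ carrying \emph{zero} mass, not merely ``negligible'' mass. The pigeonhole over $\sim 1/\e$ subshells only gives $\mu(\mathrm{annulus})\le \e\,\mu(Y)$, so one has $(\mu(Y_1)+\mu(Y_2))^{1-\e}\ge (1-\e)^{1-\e}\mu(Y)^{1-\e}$, a strict loss. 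Crucially, subadditivity of $t\mapsto t^{1-\e}$ is tight when one of $\mu(Y_1),\mu(Y_2)$ vanishes, so there is no compensating gain to absorb the lost mass in general, and the losses compound along root-to-leaf paths of unbounded length (the $Y_2$-branch retains the ambient scale $\Delta$, so depth is not bounded in terms of $\e$ alone). This is not a tuning issue; it is why the paper explicitly abandons one-shot annular cuts and proves the cut-set inequality by an \emph{iterated H\"older argument} (Lemma~\ref{lem:holder}): one builds a deeper auxiliary tree with $h$ levels and shows, by applying H\"older's inequality to products of level-sums, that at most $k-1$ of the $h$ levels fail to support the $(1-1/k)$-power estimate; sparsifying only at good levels (and aligning them across branches via a pigeonhole on $L_j\cap L_{j_0}$) yields~\eqref{eq:power-kids} without any assumption that the discarded mass is small.

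Beyond that gap, your scheme differs structurally from the paper in three further respects that are worth flagging. First, the paper works bottom-up: it reduces to finite $X$ by an ultrafilter/compactness argument (Lemma~\ref{lem:compactness}) and then builds the fragmentation map from singletons upward (Lemma~\ref{lem:initiate}), which is what controls both the lacunary/diameter-decay property~\eqref{eq:diam decay} and the separation of designated children~\eqref{eq:sep designated}; your top-down binary split does not reduce to finite spaces and would have to handle termination and measurability of the leaf set separately. Second, the paper deliberately decouples the two goals: the H\"older pruning yields~\eqref{eq:power sum} but only distortion $e^{O(1/\e^2)}$ (see Remark~\ref{rem:simplified-argument}), and the sharp $O(1/\e)$ distortion is obtained only afterwards by a second pruning step (Lemma~\ref{lem:2.5}) that applies the weighted finite Dvoretzky theorem (Theorem~\ref{lem:theta(D)}) to the separated intermediate map; hard-wiring $r\approx \e\Delta/9$ at every node, as you do, cannot reproduce this because the distortion improvement comes from compressing an already-built tree, not from the scale choice. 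Third, the ``medium ball'' dichotomy you worry about is indeed handled in the paper by the lacunary structure (Lemma~\ref{lem:cut-set-to-cover}): enlarging each cover ball by $1+2K^2\gamma$, where $(K,\gamma)$ are the lacunary parameters, lets one round each cover ball to a cut-set vertex's parent, which is where the $c_\e=e^{O(1/\e^2)}$ enlargement constant comes from — so your anticipated super-polynomial $c_\e$ is correct, but it is realized through the lacunary machinery rather than through the scale-selection lemma you sketch.
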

Theorem~\ref{thm:measure} contains Theorem~\ref{thm:1/eps} as a simple special case. Indeed, consider the case when $X$ is finite, say $|X|=n$, the measure $\mu$ is the counting measure, i.e., $\mu(A)=|A|$ for all $A\subseteq X$, and all the radii $\{r_i\}_{i\in I}$ vanish. In this case $B(x_i,r_i)=B(x_i,c_\e r_i)=\{x_i\}$, and therefore the covering condition~\eqref{eq:cover} implies that $\{x_i\}_{i\in I}\supseteq S$. Inequality~\eqref{eq:power sum} therefore implies that $|S|\ge n^{1-\e}$, which is
the (asymptotically sharp) conclusion of Theorem~\ref{thm:1/eps},
up to a constant multiplicative factor in the distortion.

Theorem~\ref{thm:measure} also implies Theorem~\ref{thm:hausdorff into}. To see this assume that $(X,d)$ is a compact metric space and $\dim_H(X)>\alpha$. The Frostman lemma (see~\cite{How95} and~\cite[Ch.~8]{Mattila}) implies that there exists a constant $K\in (0,\infty)$ and a Borel measure $\mu$ such that $\mu(X)>0$ and $\mu(B(x,r))\le Kr^\alpha$ for all $r>0$ and $x\in X$. An application of Theorem~\ref{thm:measure} to the metric measure space $(X,d,\mu)$ yields a closed subset $S\subseteq X$ that embeds into an ultrametric space with distortion $O(1/\e)$ and satisfies the covering condition~\eqref{eq:power sum}. Thus, all the covers of $S$ by balls $\{B(x_i,r_i)\}_{i\in I}$ satisfy
$$
\mu(X)^{1-\e}\le \sum_{i\in I} \mu(B(x_i,c_\e r_i))^{1-\e}\le \sum_{i\in I} \left(Kc_\e^\alpha r_i^{\alpha}\right)^{1-\e}.
$$
Hence,
$$
\sum_{i\in I} r_i^{(1-\e)\alpha}\ge \frac{\mu(X)^{1-\e}}{K^{1-\e}c_\e^{(1-\e)\alpha}}.
$$
This means that the $(1-\e)\alpha$-Hausdorff content\footnote{Recall that for $\beta\ge 0$ the $\beta$-Hausdorff content of a metric space $(Z,d)$ is defined to be the infimum of $\sum_{j\in J} r_j^\beta$ over all possible covers of $Z$ by balls $\{B(z_j,r_j)\}_{j\in J}$; see~\cite{Mattila}.} of $S$ satisfies
$$
\mathcal{H}_\infty^{(1-\e)\alpha}(S)\ge \frac{\mu(X)^{1-\e}}{K^{1-\e}c_\e^{(1-\e)\alpha}}>0,
$$
and therefore $\dim_H(S)=\inf\left\{\beta\ge 0:\ \mathcal{H}_\infty^\beta(S)=0\right\}\ge (1-\e)\alpha$, as asserted in Theorem~\ref{thm:hausdorff into}.

To summarize the above discussion, the general structural result for
metric measure spaces that is contained in Theorem~\ref{thm:measure}
implies the sharp Bourgain-Figiel-Milman style nonlinear Dvoretzky theorem
when applied to trivial covers of $S$ by singletons. The nonlinear
Dvoretzky problem for Hausdorff dimension is more subtle since one
has to argue about all possible covers of $S$, and this is achieved
by applying Theorem~\ref{thm:measure} to the metric measure space
induced by a Frostman measure.
 In both of these applications the value of the constant $c_\e$ in Theorem~\ref{thm:measure}
 is irrelevant, but we anticipate that it will play a role in future
applications of Theorem~\ref{thm:measure}. Our argument yields the
bound $c_\e=e^{O(1/\e^2)}$, but we have no reason to believe that
this dependence on $\e$ is optimal. We therefore pose the following
natural problem.
\begin{question}\label{Q:c}
What is the asymptotic behavior as $\e\to 0$ of the best possible constant $c_\e$ in Theorem~\ref{thm:measure}?
\end{question}

\subsection{An overview of the proof of
Theorem~\ref{thm:measure}}\label{sec:overview}

Theorem~\ref{thm:BLMN phase} was proved in~\cite{BLMN05} via a
deterministic iterative construction of a sufficiently large almost
ultrametric subset $S$ of a given finite metric space $(X,d)$. In
contrast, Theorem~\ref{thm:1/eps} was proved in~\cite{MN07} via a
significantly shorter probabilistic argument. It is shown
in~\cite{MN07} how to specify a distribution (depending on the
geometry of $X$) over {\em random} subsets $S\subseteq X$ that embed
into an ultrametric space with small distortion, yet their expected
cardinality is large. The lower bound on the expected cardinality of
$S$ is obtained via a lower bound on the probability $\Pr\left[x\in
S\right]$ for each $x\in X$. Such a probabilistic estimate seems to
be quite special, and we do not see how to argue probabilistically
about all possible covers of a random subset $S$, as required in
Theorem~\ref{thm:measure}. In other words, a reason why
Question~\ref{Q:tao} is more subtle than the Bourgain-Figiel-Milman
problem is that ensuring that $S$ is large is in essence a {\em
local} requirement, while ensuring that $S$ is high-dimensional is a
{\em global} requirement: once $S$ has been determined one has to
argue about all possible covers of $S$ rather than estimating
$\Pr\left[x\in S\right]$ for each $x\in X$ separately.

For the above reason our proof of Theorem~\ref{thm:measure} is a
deterministic construction which uses in some of its steps
adaptations of the methods of~\cite{BLMN05}, in addition to a
variety of new ingredients that are needed in order to handle a
covering condition such as~\eqref{eq:power sum}. Actually, in order
to obtain the sharp $O(1/\e)$ distortion bound of
Theorem~\ref{thm:measure} we also use results
of~\cite{MN07,NT-fragmentations} (see Theorem~\ref{thm:two weight}
below), so in fact Theorem~\ref{thm:measure} is based on a
combination of deterministic and probabilistic methods, the
deterministic steps being the most substantial new contribution.

The proof of Theorem~\ref{thm:measure} starts with a reduction of
the problem to the case of finite metric spaces; see
Section~\ref{sec:reduction}. Once this is achieved, the argument is
a mixture of combinatorial, analytic and geometric arguments, the
key objects of interest being {\em fragmentation maps}. These are
maps that are defined on rooted combinatorial trees and assign to
each vertex of the tree a subset of the metric space $(X,d)$ in a
way that respects the tree structure, i.e., the set corresponding to
an offspring of a vertex is a subset of the set corresponding to the
vertex itself, and vertices lying on distinct root-leaf paths are
assigned to disjoint subsets of $X$. We also require that leaves are
mapped to singletons.

Each fragmentation map corresponds to a subset of $X$ (the images of
the leaves), and our goal is to produce a fragmentation map that
corresponds to a subset of $X$ which satisfies the conclusion of
Theorem~\ref{thm:measure}. To this end, we initiate the iteration
via a bottom-up construction of a special fragmentation map; see
Section~\ref{sec:initilize}. We then proceed to iteratively ``prune"
(or ``sparsify") this initial tree so as to produce a smaller tree
whose leaves satisfy the conclusion of Theorem~\ref{thm:measure}. At
each step we argue that there must exist sufficiently many good
pruning locations so that by a pigeonhole argument we can make the
successive pruning locations align appropriately; see
Section~\ref{sec:holder}.

At this point, the subset corresponding to fragmentation map that we
constructed is sufficient to prove Theorem~\ref{thm:measure}  with a
weaker bound of $e^{O(1/\e^2)}$ on its ultrametric distortion; see
Remark~\ref{rem:simplified-argument}. To get the optimal distortion
we add another pruning step guided by a weighted version (proved in
Section~\ref{sec:NT}) of the nonlinear Dvoretzky theorem for finite
metric spaces. The mechanism of this second type of pruning is
described in Section~\ref{sec:proof-lem:3}.

It is impossible to describe the exact details of the above steps
without introducing a significant amount of notation and
terminology, and specifying rather complicated inductive hypotheses.
We therefore refer to the relevant sections for a detailed
description. To help motivate the lengthy arguments, in the body of
this paper we present the proof in a top-down fashion which is
opposite to the order in which it was described above.

\subsection{The low distortion regime}\label{sec:small}

We have thus far focused on Question~\ref{Q:tao} in the case of high
distortion embeddings into ultrametric spaces. There is also a
 Hausdorff dimensional variant  of the phase transition at distortion
$2$ that was described in Theorem~\ref{thm:BLMN phase}.

\begin{theorem}[Distortion $2+\delta$]\label{thm:2+}
There exists a universal constant $c\in (0,\infty)$ such that for
every $\delta\in (0,1/2)$, any compact metric space $(X,d)$ of finite Hausdorff dimension has a
closed subset $S\subseteq X$ that embeds with distortion $2+\delta$
in an ultrametric space, and
\begin{equation}\label{eq:delta}
\dim_H(S)\ge \frac{c\delta}{\log(1/\delta)}\dim_H(X).
\end{equation}
\end{theorem}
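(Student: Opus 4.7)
The plan is to deduce Theorem~\ref{thm:2+} from an analogue of Theorem~\ref{thm:measure} adapted to the low distortion regime, then pass to Hausdorff dimension through Frostman's lemma exactly as was done to obtain Theorem~\ref{thm:hausdorff into} from Theorem~\ref{thm:measure}. Specifically, set $\eta=\eta(\delta)\eqdef c\delta/\log(1/\delta)$. My goal is to first establish: for every $\delta\in(0,1/2)$ there exists $c_\delta\in(0,\infty)$ such that every metric measure space $(X,d,\mu)$ admits a closed subset $S\subseteq X$ that embeds into an ultrametric space with distortion $2+\delta$, and such that for every cover $\{B(x_i,r_i)\}_{i\in I}$ of $S$ one has
\begin{equation*}
\sum_{i\in I}\mu\bigl(B(x_i,c_\delta r_i)\bigr)^{\eta}\ge \mu(X)^{\eta}.
\end{equation*}
Granting this, if $\dim_H(X)>\alpha$ then Frostman's lemma yields a Borel measure $\mu$ on $X$ with $\mu(X)>0$ and $\mu(B(x,r))\le Kr^\alpha$, and applying the above to $(X,d,\mu)$ produces a closed $S\subseteq X$ of distortion $2+\delta$ into an ultrametric with $\sum_i r_i^{\eta\alpha}\ge \mu(X)^\eta/(K^\eta c_\delta^{\alpha\eta})$ for every cover; hence $\dim_H(S)\ge \eta\alpha$, which is the bound~\eqref{eq:delta}.

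The substantive step is therefore the measure-theoretic statement at distortion $2+\delta$. My approach is to rerun the entire deterministic framework behind Theorem~\ref{thm:measure}: the reduction to finite metric spaces in Section~\ref{sec:reduction}, the bottom-up construction of the initial fragmentation map in Section~\ref{sec:initilize}, and the successive pruning of fragmentation trees in Section~\ref{sec:holder}, all of which are insensitive to the target distortion level. The only place where the distortion enters quantitatively is the invocation of the weighted finite nonlinear Dvoretzky theorem (Theorem~\ref{thm:two weight} of Section~\ref{sec:NT}) used in the second pruning step of Section~\ref{sec:proof-lem:3}. Replacing that subroutine by a weighted finite nonlinear Dvoretzky theorem at distortion $2+\delta$, which for every weight $w$ produces an ultrametrically embeddable subset $S$ satisfying $w(S)\ge w(X)^{1-\eta}$ with $\eta=c\delta/\log(1/\delta)$, propagates through the rest of the argument without structural change and yields the desired covering inequality with exponent $\eta$.

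Thus the main obstacle is to prove this weighted finite version at distortion $2+\delta$. In the unweighted setting, the exponent $c\delta/\log(1/\delta)$ in the finite phase transition at $D=2$ underlies the right endpoint of Theorem~\ref{thm:BLMN phase}, and the construction of~\cite{BLMN05} produces an almost-ultrametric subset $S\subseteq X$ with $|S|\ge |X|^{\eta(\delta)}$ and distortion $2+\delta$. I would upgrade that construction to the weighted category by the same device used to derive Theorem~\ref{thm:two weight} in Section~\ref{sec:NT}: redo the hierarchical partitioning argument of~\cite{BLMN05} with the counting measure replaced by an arbitrary weight $w$, tracking weight rather than cardinality at every splitting step, and verifying that the recursive inequality that drives the phase transition at $D=2$ carries over verbatim. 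The care here is in handling the logarithmic factor: the distortion $2+\delta$ only barely escapes the catastrophic regime $D\le 2$, so the splitting rule must be tuned so that each partition step loses at most a multiplicative factor of $w(\cdot)^{1-\eta(\delta)}$ with the precise constant $c$ in $\eta(\delta)=c\delta/\log(1/\delta)$.

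Once the weighted finite distortion-$(2+\delta)$ Dvoretzky theorem is in hand, the measure-theoretic statement follows by slotting it into the proof of Theorem~\ref{thm:measure} in place of Theorem~\ref{thm:two weight}, and Theorem~\ref{thm:2+} then follows by the Frostman reduction displayed above. I expect the bookkeeping of the logarithmic loss through the nested pruning steps to be the most delicate point, since the framework was tuned for the $O(1/\e)$ distortion regime, but no essentially new geometric input should be required beyond the weighted low-distortion Dvoretzky theorem.
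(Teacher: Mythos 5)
Your high-level plan is exactly the paper's: reduce to a covering statement for metric measure spaces in the spirit of Theorem~\ref{thm:measure} (this is precisely Theorem~\ref{thm:measure2+}), then apply the Frostman lemma as in the deduction of Theorem~\ref{thm:hausdorff into}. You also correctly pinpoint the single place where the distortion enters quantitatively, namely the weighted finite nonlinear Dvoretzky step inside the second pruning.

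However, you have overlooked that the paper's machinery is \emph{already} parameterized by an arbitrary $D\in(2,\infty)$, and in particular already covers $D=2+\delta$. Lemma~\ref{lem:3} is stated for all $D>2$, its cut-set conclusion~\eqref{eq:lemma goal} carries the exponent $\bigl(1-\tfrac1k\bigr)^2\theta\bigl(\tfrac{1-3\tau}{1+\tau}D\bigr)$, and the function $\theta(\cdot)$ of Definition~\ref{def:theta(D)} was deliberately set up to interpolate between the two regimes: inequality~\eqref{eq:lower theta 2e} gives $\theta(D)\ge 1-2e/D$ for the high-distortion case, while inequality~\eqref{eq:lower theta near 2} gives $\theta(2+\delta)\ge c\delta/\log(1/\delta)$. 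Similarly, Theorem~\ref{thm:two weight} (and its corollary Theorem~\ref{lem:theta(D)}) works for every $D>2$ by choosing $\e=1-\theta(D)$ in~\eqref{eq:dist bound}. Consequently, the paper's proof of Theorem~\ref{thm:measure2+} is not a separate construction but a three-line specialization: apply Lemma~\ref{lem:3} with $D=2+\delta$, $k=2$, $\tau=\delta/9$, observe via~\eqref{eq:lower theta near 2} that the resulting exponent dominates $c\delta/\log(1/\delta)$, decrease the exponent monotonically, and finish with Lemma~\ref{lem:cut-set-to-cover}. You instead propose to re-derive a weighted low-distortion Dvoretzky theorem by redoing the hierarchical partitioning argument of~\cite{BLMN05} with a weight in place of counting measure. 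That detour is unnecessary, and it is also the riskiest part of your plan: the low-distortion construction in~\cite{BLMN05} is delicate precisely because the phase transition at $D=2$ is sharp, and there is no reason the weighted upgrade would come out with the optimal $c\delta/\log(1/\delta)$ exponent without substantial new work --- indeed the weighted Ramsey variant proved in~\cite{BLMN05} only yields distortion $O(\e^{-1}\log(2/\e))$. Finally, a small but substantive slip: the form of the weighted conclusion you state, $w(S)\ge w(X)^{1-\eta}$, is not scale-invariant in $w$ and is not what is needed; the correct form, as in Theorem~\ref{lem:theta(D)}, is $\sum_{x\in S}w(x)^{\theta}\ge\bigl(\sum_{x\in X}w(x)\bigr)^{\theta}$.
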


For distortion strictly less than $2$ the following theorem shows
that there is no  nonlinear Dvoretzky phenomenon in terms of
Hausdorff dimension.

\begin{theorem}
\label{thm:<2} For every $\alpha\in (0,\infty)$ there exists a
compact metric space $(X,d)$ of Hausdorff dimension  $\alpha$, such
that if $S\subseteq X$ embeds into Hilbert space with distortion
strictly smaller than $2$ then $\dim_H(S)=0$.
\end{theorem}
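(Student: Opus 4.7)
The plan is to construct $X_\alpha$ as a compact iterated fractal built from the finite extremal examples provided by the $D<2$ half of Theorem~\ref{thm:BLMN phase}: for every $D\in(1,2)$ that theorem produces $n$-point metric spaces in which every subset admitting a distortion-$D$ embedding into Hilbert space has cardinality at most $c'(D)\log n$. Fix sequences $D_k\nearrow 2$ and $n_k\to\infty$, and for each $k$ let $F_k$ be such an $n_k$-point extremal example, normalized so that $\min_{x\ne y}d_{F_k}(x,y)=1$; the BLMN constructions are based on logarithmic-girth (expander-type) graphs, so we may arrange $\mathrm{diam}(F_k)=:d_k=O(\log n_k)$. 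Realize $X_\alpha$ as a nested Moran-type fractal on the address tree $\prod_k F_k$ in which each level-$(k-1)$ cell --- of diameter $r_{k-1}$ --- contains $n_k$ disjoint sub-cells placed as a scaled copy of $F_k$ with scale factor $\lambda_k=r_{k-1}/d_k$, the sub-cells themselves being level-$k$ cells of diameter $r_k:=r_{k-1}n_k^{-1/\alpha}$. The condition $n_k^{1/\alpha}\ge 3d_k$, satisfiable for $n_k$ large because $d_k$ is only logarithmic, guarantees $r_k\ll \lambda_k$, so sub-cells are metrically separated and the construction is well defined; a standard Moran/self-similar computation then gives $\dim_H(X_\alpha)=\alpha$.

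Suppose now $S\subseteq X_\alpha$ embeds into Hilbert space with distortion $D<2$, and fix $k_0$ so that $D(1+o_k(1))\le D_k$ for every $k\ge k_0$, where $o_k(1)=O(d_kn_k^{-1/\alpha})\to 0$ bounds the ratio of within-cell to between-cell perturbation at level $k$. For each level-$(k-1)$ cell $C$ that $S$ meets, pick one point $s_\sigma\in S$ from each level-$k$ sub-cell $\sigma\subseteq C$ that $S$ meets. Because distances between points in distinct sub-cells equal $\lambda_kd_{F_k}(\cdot,\cdot)$ on their sub-cell labels up to an additive error at most the sub-cell diameter $r_k\ll \lambda_k$, the resulting transversal is bi-Lipschitz equivalent with constant $1+o_k(1)$ to a subset of $F_k$ (rescaled by $\lambda_k$). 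Its image in Hilbert space therefore has distortion at most $D(1+o_k(1))\le D_k$, so by the extremal property of $F_k$ it has at most $c'(D_k)\log n_k$ elements. Iterating this over levels shows $S$ meets at most
\begin{equation*}
N_k\;\le\;C_{k_0}\prod_{j=k_0}^{k}c'(D_j)\log n_j
\end{equation*}
cells at level $k$, so for every $\beta>0$,
\begin{equation*}
\mathcal{H}_\infty^\beta(S)\;\le\;N_kr_k^{\beta}\;\le\;C_{k_0}\prod_{j=k_0}^{k}\frac{c'(D_j)\log n_j}{n_j^{\beta/\alpha}}\;\xrightarrow[k\to\infty]{}\;0,
\end{equation*}
whenever $n_k$ grows fast enough that each factor is eventually $<1$. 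Hence $\dim_H(S)=0$.

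The central obstacle is the transversal rigidity step: one must turn an arbitrary global distortion-$<2$ Hilbert embedding of $S$ into an honest distortion-$<2$ Hilbert embedding of a subset of every $F_k$. This demands that the fractal be nearly ultrametric at every level --- within-cell diameters vanishingly small compared to between-cell distances --- while still achieving $\dim_H(X_\alpha)=\alpha$; the two constraints are reconcilable precisely because the BLMN extremal examples can be taken from graphs of logarithmic diameter, which permits the ratio $d_k/n_k^{1/\alpha}$ to be pushed to zero. A secondary point is that the same $X_\alpha$ must witness the conclusion uniformly for every $D<2$; this is absorbed by the diagonal choice $D_k\nearrow 2$, since any fixed $D<2$ is eventually dominated by $D_k$.
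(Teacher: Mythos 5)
Your proof is correct and takes essentially the same tree-of-metric-spaces approach as the paper (see Sections~\ref{sec:trees of metrics} and~\ref{sec:random graph}), so this is a variant rather than a genuinely different route, but a few technical points are worth flagging. First, the extremal $n$-point metric spaces from~\cite{BLMN05} for the $D<2$ regime are not logarithmic-diameter expander graphs; they are the spaces $W_n$ coming from Erd\H{o}s--R\'enyi graphs $G(n,1/2)$, in which nonzero distances take only the values $1$ and $2$. (Expanders with logarithmic diameter are what the paper uses for the \emph{high}-distortion regime, i.e., the second half of Theorem~\ref{thm:hausdorff into}.) This misattribution does not break your argument---the bounded-diameter case is only easier---but it means your separation condition $n_k^{1/\alpha}\ge 3d_k$ is far stronger than needed: with $\diam(W_n)=2$ one only requires the fixed inequality $n_k^{1/\alpha}>2$, i.e., condition~\eqref{eq:delta_k condition} with $\delta_k=1/2$. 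Second, the paper defines the product metric $\rho_\alpha$ in~\eqref{eq:def rho alpha} abstractly so that when two branches first differ at level~$k$ their distance is \emph{exactly} a rescaled copy of $d_k$; this makes the transversal you extract an exact isometric (up to dilation) copy of a subset of the building block, which eliminates the $(1+o_k(1))$ bookkeeping and removes the need to let $D_k\nearrow 2$ merely to absorb shrinking perturbation errors. The diagonal choice $D_k\nearrow 2$ is still needed to make a single space work uniformly for all $D<2$, which is exactly the role played by the non-decreasing hereditary properties $\P_n$ in the paper's Theorem~\ref{lem:ub-second-lem}. In short: same strategy, with the paper's abstract metric and constant-diameter building blocks making exact what you control approximately.
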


It was recently observed in~\cite{Fun11} that Theorem~\ref{thm:<2}
easily implies the following seemingly stronger assertion: there
exists a compact metric space $X_\infty$ such that
$\dim_H(X_\infty)=\infty$, yet every subset $S\subset X_\infty$ that
embeds into Hilbert space with distortion strictly smaller than~2
must have $\dim_H(S)=0$.

As in the case of the finite nonlinear Dvoretzky theorem,  Question~\ref{Q:tao} at distortion $2$ remains open. In the same vein, the correct asymptotic dependence on $\delta$ in~\eqref{eq:delta} is unknown.

Theorem~\ref{thm:2+} follows  from the following result in the
spirit of Theorem~\ref{thm:measure}, via the same Frostman measure argument.

\begin{theorem}\label{thm:measure2+} There exists a universal constant $c\in (0,\infty)$ such that
for every $\delta\in (0,1/2)$ there exists $c'_\delta\in
(0,\infty)$  with the following property. Every metric measure space
$(X,d,\mu)$ has a closed subset $S\subseteq X$ such that $(S,d)$
embeds into an ultrametric space with distortion $2+\delta$, and for
every $\{x_i\}_{i\in I}\subseteq X$ and $\{r_i\}_{i\in I}\subseteq
[0,\infty)$ such that the balls $\{B(x_i,r_i)\}_{i\in I}$ cover $S$,
we have
\begin{equation}
\sum_{i\in I}
\mu(B(x_i,c'_\delta r_i))^{\frac{c\delta}{\log(1/\delta)}}\ge
\mu(X)^{\frac{c\delta}{\log(1/\delta)}}.
\end{equation}
\end{theorem}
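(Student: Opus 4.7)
The plan is to follow the same scheme used to prove Theorem~\ref{thm:measure}, replacing only the distortion-$O(1/\e)$ weighted nonlinear Dvoretzky theorem (Theorem~\ref{thm:two weight}) by its distortion-$(2+\delta)$ counterpart. Accordingly, I would first reduce to finite metric measure spaces by a discretization argument analogous to the one outlined in Section~\ref{sec:reduction}; any slack introduced by the discretization is absorbed into the constant $c'_\delta$.

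Next, I would run the bottom-up construction of the initial fragmentation map (Section~\ref{sec:initilize}) and then apply the pigeonhole/alignment pruning of Section~\ref{sec:holder} verbatim. These two steps are insensitive to the ultrametric distortion target: they produce a rooted labeled tree whose nodes are labeled by subsets of $X$ with geometrically decreasing diameters, together with the bookkeeping needed to derive a covering estimate in the spirit of~\eqref{eq:power sum}. Stopping at this stage would yield only a super-constant distortion bound of the shape $e^{O(1/\delta^2)}$, as in Remark~\ref{rem:simplified-argument}.

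To reach the optimal distortion $2+\delta$ I would plug the following weighted substitute into the second pruning step of Section~\ref{sec:proof-lem:3}: for every finite metric space $(X,d)$ with nonnegative weights $\{w_x\}_{x\in X}$ and every $\delta\in (0,1/2)$, there exists a subset $S\subseteq X$ that embeds into an ultrametric with distortion $2+\delta$ and satisfies $\sum_{x\in S} w_x^{c\delta/\log(1/\delta)}\ge \bigl(\sum_{x\in X} w_x\bigr)^{c\delta/\log(1/\delta)}$. This is the weighted refinement of the finite distortion-$(2+\delta)$ nonlinear Dvoretzky theorem from~\cite{BLMN05}. Inserting it in place of Theorem~\ref{thm:two weight} in the argument of Section~\ref{sec:proof-lem:3} produces a closed subset with ultrametric distortion $2+\delta$ and the required exponent-$c\delta/\log(1/\delta)$ covering bound.

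The main obstacle is the weighted distortion-$(2+\delta)$ statement itself. The unweighted version is established in~\cite{BLMN05} by a delicate iterative construction that carefully alternates between ``small-diameter'' and ``large-diameter'' ball removals so that the telescoping distortion along the resulting tree stays arbitrarily close to $2$. Redoing this iteration with measure in place of cardinality, while preserving the sharp $c\delta/\log(1/\delta)$ exponent, forces one to track the weighted mass removed at each stage and to prove an analogue of the key BLMN inductive inequality in weighted form. Once this weighted theorem is in hand, the remainder of the proof of Theorem~\ref{thm:measure2+} is a mechanical replay of the proof of Theorem~\ref{thm:measure}, since that proof uses Theorem~\ref{thm:two weight} only as a black box.
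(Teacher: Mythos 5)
Your overall plan is sound, and it matches the paper's architecture: finite reduction, initial fragmentation, H\"older pruning, then a second pruning driven by a weighted finite Dvoretzky step. But you have misidentified where the remaining work lies. You treat Theorem~\ref{thm:two weight} as a ``distortion-$O(1/\e)$'' statement and propose to replace it by a $(2+\delta)$-counterpart, describing as your ``main obstacle'' a weighted reworking of the delicate BLMN iterative construction. In fact Theorem~\ref{thm:two weight} (and hence Theorem~\ref{lem:theta(D)}) already covers the entire range $D\in(2,\infty)$: as $\e\to 1^-$ the quantity $D=\frac{2}{\e(1-\e)^{(1-\e)/\e}}$ in~\eqref{eq:dist bound} tends to $2$, and the inverse function is precisely $\theta(D)$ of Definition~\ref{def:theta(D)}, with the asymptotics~\eqref{eq:lower theta near 2} giving $\theta(2+\delta)\ge c\delta/\log(1/\delta)$. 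Consequently Lemma~\ref{lem:3}, which is already stated for arbitrary $D>2$, does all the work; the paper's proof of Theorem~\ref{thm:measure2+} is a one-paragraph application of Lemma~\ref{lem:3} with $D=2+\delta$, $k=2$, and $\tau=\delta/9$, followed by Lemma~\ref{lem:cut-set-to-cover} with $\theta=c\delta/\log(1/\delta)$, $K=2\tau^{-16}/(1-3\tau)$ and $\gamma=\tau^{-16}$ (the factor $(1-1/k)^2=1/4$ from $k=2$ is harmless since it is absorbed into the universal constant). The route you propose through~\cite{BLMN05} is also strictly harder than the paper's: the weighted metric Ramsey bound actually proved in~\cite{BLMN05} only gives distortion $O(\e^{-1}\log(2/\e))$, so a direct weighted rerun of their iteration does not obviously yield the sharp $\theta(D)$ exponent near distortion $2$; the paper gets the sharp weighted bound instead from the probabilistic sampling argument of~\cite{NT-fragmentations}, modified to handle weights, which is exactly what Theorem~\ref{thm:two weight} encapsulates.
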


\subsection{Further applications}
\label{sec:app}

Several applications of our results have been recently discovered.
We discuss some
of them here as an indication of how Theorem~\ref{thm:measure} could
be used. Before doing so, we note the obvious observation that since
Theorem~\ref{thm:measure} implies Theorem~\ref{thm:1/eps} it
automatically inherits its applications in theoretical computer
science. Algorithmic applications of nonlinear Dvoretzky theory
include the best known lower bound for the randomized $k$-server
problem~\cite{BBM06,BLMN05}, and the design of a variety of
proximity data structures~\cite{MN07}, e.g., the only known
approximate distance oracles with constant query time, improving
over the important work of Thorup and Zwick~\cite{TZ05} (this
improvement is sharp~\cite{SVY09,Wul12}. Nonlinear Dvoretzky theory
is the only known method to produce such sharp constructions).

A version of Theorem~\ref{thm:hausdorff into} in the case of
infinite Hausdorff dimension, in which the conclusion is that $S$ is
also infinite dimensional, was recently obtained in~\cite{Fun11}. In
the ensuing subsections we discuss two additional research
directions: surjective cube images of spaces with large Hausdorff
dimension, and the majorizing measures theorem.

\subsubsection{Urba\'nski's problem}\label{sec:urbanski} This application of Theorem~\ref{thm:hausdorff into}
 is due to Keleti, M\'ath\'e and Zindulka~\cite{KMZ12}. We thank them for allowing us to sketch their proof
here.  Urba\'nski asked~\cite{Urb09} whether given $n\in \N$ every
metric space $(X,d)$ with $\dim_H(X)>n$ admits a surjective
Lipschitz map $f:X\to [0,1]^n$ (Urba\'nski actually needed a weaker
conclusion). Keleti, M\'ath\'e and Zindulka proved that without further
assumptions on $X$ the Urba\'nski problem has a negative answer, yet
if $X$ is an analytic subset of a Polish space then one can use
Theorem~\ref{thm:hausdorff into} to solve Urba\'nski's problem
positively. To see how this can be proved using
Theorem~\ref{thm:hausdorff into}, note that by~\cite{How95} it
suffices to prove this statement when $X$ is compact. Choose $\e\in
(0,1)$ such that $\dim_H(X)> n/(1-\e)$. By
Theorem~\ref{thm:hausdorff into} there exists a compact $S\subseteq
X$ with $\dim_H(S)>n$, an ultrametric space $(U,\rho)$, and a
bijection $f:S\to U$ satisfying $d(x,y)\le \rho(f(x),f(y))\le
\frac9{\e}d(x,y)$ for all $x,y\in S$.

Since $(U,\rho)$ is a compact ultrametric space, there exists a linear ordering $\le$ of $U$ such that for every $a,b\in U$ with $a\le b$, the order interval $[a,b]=\{c\in U:\ a\le c\le b\}$ is a Borel set satisfying $\diam([a,b])=\rho(a,b)$. This general property of ultrametric spaces follows directly from the well-known representation of such spaces as ends of trees (see~\cite{Hug04}); the desired ordering is then a lexicographical order associated to the tree structure. Since $\dim_H(U)>n$, we can consider a Frostman probability measure on $U$, i.e., a Borel measure $\mu$ on $U$ satisfying $\mu(U)=1$ such that there exists $K\in (0,\infty)$ for which $\mu(A)\le K(\diam(A))^n$ for all $A\subseteq U$. Define $g:U\to [0,1]$ by $g(a)=\mu(\{x\in U:\ x<a\})$. If $a,b\in U$ satisfy $a<b$ then $|g(b)-g(a)|=\mu([a,b))\le K(\diam([a,b]))^n=K\rho(a,b)^n$. Thus $g$ is continuous, implying that $g(U)=[0,1]$ ($U$ is compact and $g$ cannot have any ``jumps" because $\mu$ is atom-free).

Let $P:[0,1]\to [0,1]^n$ be a Peano curve (see e.g.~\cite{Sag94}), i.e., $P([0,1])=[0,1]^n$ and we have the $1/n$-H\"older estimate $\|P(s)-P(t)\|_2\le L|s-t|^{1/n}$ for all $s,t\in [0,1]$. Then the mapping $\psi=P\circ g\circ f:S\to [0,1]^n$ is surjective and $9K^{1/n}L/\e$-Lipschitz. There exists $\overline \psi:X\to [0,1]^n$ that extends $\psi$ and is $CK^{1/n}L/\e^2$-Lipschitz, where $C$ is a universal constant. This follows from the absolute extendability property of ultrametric spaces, or more generally metric trees; see~\cite{LN05}. Alternatively, one can use the nonlinear Hahn-Banach theorem~\cite[Lem.~1.1]{BL}, in which case $C$ will depend on $n$. Since $\overline{\psi}(X)\supseteq \psi(S)=[0,1]^n$, this concludes the proof of the Keleti-M\'ath\'e-Zindulka positive solution of Urba\'nski's problem.

The conclusion of Urba\'nski's problem is known to fail if we only assume that $X$ has positive $n$-dimensional Hausdorff measure; see~\cite{VIM63}, \cite{Kel95} and~\cite[Thm.~7.4]{AK00}. However, in the special case when $X$ is a subset of $\mathbb R^n$ of positive Lebesgue measure, a well-known conjecture of Laczkovich asks for the same conclusion, i.e., that there is a surjective Lipschitz mapping from $X$ onto $[0,1]^n$. The Laczkovich conjecture has a positive answer~\cite{ACP05} when $n=2$, and there is recent exciting (still unpublished) progress on the Laczkovich question for $n\ge 3$ due to Marianna Cs\"ornyei and Peter Jones. Note that the above argument implies that if $(X,d)$ is compact and $\dim_H(X)=n$ then for every $\delta\in (0,1)$ there exists a $(1-\delta)$-H\"older mapping from $X$ onto $[0,1]^n$.

\subsubsection{Talagrand's majorizing measures theorem}\label{sec:tal}  Given a metric space $(X,d)$ let $\P_X$ be the Borel probability measures on $X$. The Fernique-Talagrand $\gamma_2$ functional is defined as follows.
\begin{equation*}\label{eq:def gamma2}
\gamma_2(X,d)=\inf_{\mu\in \P_X}\sup_{x\in X} \int_0^\infty \sqrt{\log\left(\frac{1}{\mu(B(x,r))}\right)}dr.
\end{equation*}

In 1987 Talagrand proved~\cite{Tal87} the following important
nonlinear Dvoretzky-like theorem, where the notion of ``dimension"
of $(X,d)$ is interpreted to be $\gamma_2(X,d)$.
Theorem~\ref{thm:talagrand} below is stated slightly differently
in~\cite{Tal87}, but it easily follows from a combination
of~\cite[Lem.~6]{Tal87}, \cite[Thm.~11]{Tal87},
and~\cite[Prop.~13]{Tal87}.
\begin{theorem}[\cite{Tal87}]\label{thm:talagrand} There are universal constants $c,D\in (0,\infty)$ such that every finite metric space $(X,d)$ has a subset $S\subseteq X$ that embeds into an ultrametric space with distortion $D$ and $\gamma_2(S,d)\ge c\gamma_2(X,d)$.
\end{theorem}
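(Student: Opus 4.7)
The plan is to follow the same Frostman-style deduction used to pass from Theorem~\ref{thm:measure} to Theorem~\ref{thm:hausdorff into}, with the Frostman measure replaced by a near-optimal Fernique majorizing measure. Given a finite metric space $(X,d)$, choose a Borel probability measure $\mu$ on $X$ such that
$$\sup_{x\in X}\int_0^\infty\sqrt{\log(1/\mu(B(x,r)))}\,dr\le 2\gamma_2(X,d),$$
and apply Theorem~\ref{thm:measure} to $(X,d,\mu)$ with a fixed value $\e\in(0,1)$, say $\e=1/2$. This produces a closed subset $S\subseteq X$ that embeds into an ultrametric space with universal distortion, together with a constant $c_\e$ such that every ball cover $\{B(x_i,r_i)\}_{i\in I}$ of $S$ satisfies the power-sum inequality~\eqref{eq:power sum} with exponent $1-\e$.

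It remains to show that~\eqref{eq:power sum} forces $\gamma_2(S,d)\ge c\gamma_2(X,d)$ for a universal $c$. I would use Talagrand's partition characterization $\gamma_2(X,d)\asymp \inf_{(\mathcal{A}_n)}\sup_x\sum_n 2^{n/2}\diam(\mathcal{A}_n(x))$, where $(\mathcal{A}_n)$ ranges over admissible partition chains satisfying $|\mathcal{A}_n|\le 2^{2^n}$. Given any admissible chain $(\mathcal{A}_n)$ of $S$, each cell $A\in\mathcal{A}_n$ yields a ball $B(x_A,\diam A)$ with $x_A\in A$, and the resulting balls cover $S$. Inequality~\eqref{eq:power sum} then gives $\sum_{A\in\mathcal{A}_n}\mu(B(x_A,c_\e\diam A))^{1-\e}\ge 1$, and since $|\mathcal{A}_n|\le 2^{2^n}$, some cell $A_n^*$ satisfies $\sqrt{\log(1/\mu(B(x_{A_n^*},c_\e\diam A_n^*)))}\lesssim 2^{n/2}/\sqrt{1-\e}$. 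Summing over $n$ and invoking the choice of $\mu$ couples $(\mathcal{A}_n)$ to the $\mu$-entropy integral that controls $\gamma_2(X,d)$, which is the route I expect to yield the desired lower bound.

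The principal obstacle is that the cells $A_n^*$ selected at different scales $n$ must be consistent with a single witness $s^*\in S$, so that $\sum_n 2^{n/2}\diam(\mathcal{A}_n(s^*))\gtrsim\gamma_2(X,d)$ at that one point: the power-sum condition is a per-scale, global statement, whereas Talagrand's characterization demands a pointwise supremum in $s$. The natural remedy is a top-down selection guided by the ultrametric tree structure on $S$ supplied by Theorem~\ref{thm:measure}: descend at each level into the child cell of largest $\mu$-mass and use a pigeonhole/averaging argument to show that this descent accumulates the correct fraction of the $\mu$-entropy integral. This is morally the combinatorial content of Talagrand's functional characterization of $\gamma_2$ and should deliver a universal constant $c$, completing the proof.
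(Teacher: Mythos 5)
The paper itself does not prove Theorem~\ref{thm:talagrand}: it attributes the statement to Talagrand~\cite{Tal87} and explains that it follows from a combination of~\cite[Lem.~6]{Tal87}, \cite[Thm.~11]{Tal87} and~\cite[Prop.~13]{Tal87}. The authors do remark that a forthcoming companion paper~\cite{MN11} will derive Theorem~\ref{thm:talagrand} from Theorem~\ref{thm:measure}, and they explicitly warn that this deduction ``will be published elsewhere due to its length.'' Your proposal is a sketch of precisely that deduction, so the overall strategy is the one the authors envisioned. But the sketch stops well short of a proof, and the obstacle you flag is the genuine one, not a removable technicality.

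Two concrete problems remain. First, the per-scale pigeonhole gives, for each $n$, some cell $A_n^*\in\mathcal{A}_n$ with $\mu\bigl(B(x_{A_n^*},c_\e\diam A_n^*)\bigr)\ge 2^{-2^n/(1-\e)}$, but nothing forces these cells to nest, so no single $s^*\in S$ with $\mathcal{A}_n(s^*)=A_n^*$ for all $n$ need exist. Your proposed remedy -- descend along the ultrametric tree of $S$ into the child of largest $\mu$-mass -- operates on the tree supplied by Theorem~\ref{thm:measure}, not on the arbitrary admissible chain $(\mathcal{A}_n)$, and the two partition structures are not aligned; it is not explained how a ray chosen in the tree controls $\diam(\mathcal{A}_n(\cdot))$ along that ray. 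Second, even granting a consistent witness $s^*\in S$ with $\diam(\mathcal{A}_n(s^*))\gtrsim e_n(s^*)$, where $e_n(x)$ denotes the radius at which $\mu(B(x,\cdot))$ reaches $2^{-2^n}$, the choice of $\mu$ as a near-optimal majorizing measure yields the \emph{upper} bound $\sum_n 2^{n/2}e_n(s^*)\le\sup_{x\in X}\sum_n 2^{n/2}e_n(x)\lesssim\gamma_2(X,d)$, whereas you need the matching \emph{lower} bound at the specific point $s^*$. Near-optimality of $\mu$ gives such a lower bound only at some worst-case $x\in X$, which has no reason to lie in $S$, let alone to coincide with $s^*$. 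Closing both of these gaps is exactly what the authors attest requires a separate, lengthy argument; as written, the proposal identifies the right ingredients but does not establish the theorem.
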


Theorem~\ref{thm:talagrand} is of major importance since it easily
implies Talagrand's {\em majorizing measures theorem}. Specifically,
suppose that $\{G_x\}_{x\in X}$ is a centered Gaussian process and
for $x,y\in X$ we have $d(x,y)=\sqrt{\E[(G_x-G_y)^2]}$. Talagrand's
majorizing measures theorem asserts that $\E[\sup_{x\in X} G_x]\ge
K\gamma_2(X,d)$, where $K\in (0,\infty)$ is a universal constant.
Let $S\subseteq X$ be the subset obtained from an application of
Theorem~\ref{thm:talagrand} to $(X,d)$. Since ultrametric spaces are
isometric to subsets of Hilbert space, there is a Gaussian process
$\{H_x\}_{x\in S}$ such that $\rho(x,y)= \sqrt{\E[(H_x-H_y)^2]}$ is
an ultrametric on $S$ and $d(x,y)\le \rho(x,y)\le Dd(x,y)$ for all
$x,y\in S$. Hence $\gamma_2(S,\rho)\ge \gamma_2(S,d)\ge
c\gamma_2(X,d)$, and a standard application of Slepian's lemma
(see~\cite[Prop.~5]{Tal87}) yields $\E[\sup_{x\in X}
G_x]\ge\E[\sup_{x\in S} G_x]\ge D^{-1}\E[\sup_{x\in S} H_x]$. This
shows that due to Theorem~\ref{thm:talagrand} it suffices to prove
the majorizing measures theorem when $(X,d)$ is an ultrametric space
itself. Ultrametric spaces have a natural tree structure (based on
nested partitions into balls; see e.g.~\cite[Sec.~3.1]{BLMN05}), and
they can be embedded into Hilbert space so that disjoint subtrees
are orthogonal. For Gaussian processes orthogonality means
independence, which indicates how the ultrametric structure can be
harnessed to yield a direct and short proof of the majorizing
measures theorem for ultrametrics~\footnote{According to
Talagrand~\cite[Sec.~2.8]{Tal11}, Fernique was the first to observe
that the majorizing measures theorem holds for ultrametrics.
See~\cite[Prop.~13]{Tal87} for a short proof of this fact.}. This
striking application of a metric Dvoretzky-type theorem is of great
importance to several areas; we refer to~\cite{Tal05,Tal11} for an
exposition of some of its many applications.

In a forthcoming paper~\cite{MN11} we show how
Theorem~\ref{thm:measure} implies Talagrand's nonlinear Dvoretzky
theorem. The deduction of Theorem~\ref{thm:talagrand} in~\cite{MN11}
is based on the ideas presented here, but it will be published
elsewhere due to its length. Our proof of Theorem~\ref{thm:measure}
does not borrow from Talagrand's proof of
Theorem~\ref{thm:talagrand}, and we do not see how to use
Talagrand's approach in order to deduce the general covering
statement of Theorem~\ref{thm:measure}. It is an interesting open
question to determine whether Talagrand's method is relevant to the
setting of Theorem~\ref{thm:measure}. Beyond being simpler,
Talagrand's original argument has additional advantages over our
approach; specifically, it yields the important {\em generic
chaining} method~\cite{Tal05,Tal11}.

The nonlinear Dvoretzky theorems that are currently known, including
variants of Theorem~\ref{thm:talagrand} for other functionals that
are defined similarly to $\gamma_2$, differ from each other in the
notion of ``dimension", or ``largeness", of a metric space that they
use. While we now have a general nonlinear Dvoretzky theorem that
contains the Bourgain-Figiel-Milman, Talagrand and Tao phenomena as
special cases, one might conceivably obtain a characterization of
notions of ``dimension" of metric spaces for which a nonlinear
Dvoretzky theorem can be proved.  We therefore end this introduction
with an open-ended and purposefully somewhat vague direction for
future research.

\begin{question}
What are the notions of ``dimension" of metric spaces that yield a
nonlinear Dvoretzky theorem in the sense that every (compact) metric
space can be shown to contain a subset of proportional dimension
that well-embeds into an ultrametric space? At present we know this
for the following notions of dimension: $\log|X|$, $\dim_H(X)$,
$\gamma_2(X)$ (and some natural variants of these notions). Is there
an overarching principle here?
\end{question}

\section{Reduction to finite metric spaces}\label{sec:reduction}

In this section we use a simple compactness argument to show that it suffices to prove
Theorem~\ref{thm:measure} and Theorem~\ref{thm:measure2+} when
$(X,d)$ is a finite metric space. Before doing so we fix some standard terminology.

As we have already noted earlier, given a metric space $(X,d)$ and
$r>0$, the closed ball centered at $x\in X$ of radius $r$ is denoted
$B(x,r)=\{y\in X:\ d(x,y)\le r\}$. Open balls are denoted by
$B^\circ(x,r)=\{y\in X:\ d(x,y)<r\}$. We shall use this notation
whenever the metric space in question will be clear from the context
of the discussion, but when we will need to argue about several
metrics at once we will add a subscript indicating the metric with
respect to which balls are taken. Thus, we will sometimes use the
notation $B_d(x,r), B^\circ_d(x,r)$. Similar conventions hold for
diameters of subsets of $X$: given a nonempty $A\subseteq X$ we
denote $\diam(A)=\sup_{x,y\in A} d(x,y)$ whenever the underlying
metric is clear from the context, and otherwise we denote this
quantity by $\diam_d(A)$.

Given two nonempty subsets $A,B\subseteq X$ we denote as usual
\begin{equation}\label{eq:def d sets}
d(A,B)=\inf_{\substack{x\in A\\y\in B}} d(x,y),
\end{equation}
and we will also use the standard notation $d(x,A)=d(\{x\},A)$. The Hausdorff distance between $A$ and $B$ is denoted
\begin{equation}\label{eq:def haus dist}
d_H(A,B)=\max\left\{\sup_{x\in A} d(x,B),\sup_{y\in B}d(y,A)\right\}.
\end{equation}

\begin{lemma}\label{lem:compactness}
Fix $D,c\ge 1$ and $\theta\in (0,1]$. Suppose that any finite metric
measure space $(X,d,\mu)$ has a subset $S\subseteq X$ that embeds
with distortion $D$ into an ultrametric space, such that every
family of balls $\{B(x_i,r_i)\}_{i\in I}$ that covers $S$ satisfies
\begin{equation}\label{eq:compactness theta}
\sum_{i\in I} \mu\left(B(x_i,cr_i)\right)^\theta\ge \mu(X)^\theta.
\end{equation}
Then any metric measure space $(X,d,\mu)$ has a closed subset $S\subseteq
X$ that embeds with distortion $D$ into an ultrametric space, such
that every family of balls $\{B(x_i,r_i)\}_{i\in I}$ that covers $S$
satisfies~\eqref{eq:compactness theta}.
\end{lemma}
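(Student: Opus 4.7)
The plan is to derive the general case from the finite case by a standard discretization-and-compactness argument.

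First, for each $n\in\N$ I would choose a finite $\delta_n$-net $N_n\subseteq X$ with $\delta_n\downarrow 0$, together with a Borel partition $\{V_x\}_{x\in N_n}$ of $X$ satisfying $x\in V_x\subseteq B(x,\delta_n)$ (e.g.\ a broken Voronoi decomposition). Setting $\mu_n(\{x\})\eqdef\mu(V_x)$ makes $(N_n,d|_{N_n\times N_n},\mu_n)$ a finite metric measure space of total mass $\mu(X)$, so the hypothesis supplies a subset $S_n\subseteq N_n$ together with an ultrametric $\rho_n$ satisfying $d\le\rho_n\le Dd$, for which every cover of $S_n$ by closed balls with centers in $N_n$ obeys~\eqref{eq:compactness theta}.

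Next, I would extract two limits. By the Blaschke selection theorem on the compact hyperspace of closed subsets of $X$ under the Hausdorff metric, a subsequence of $\{S_n\}$ converges in Hausdorff distance to a closed set $S\subseteq X$; write $h_n\eqdef d_H(S_n,S)\to 0$. A diagonal Arzel\`a--Ascoli-type extraction then produces an ultrametric on $S$: pick a countable dense $\mathcal D\subseteq S$ and, for each $x\in\mathcal D$, points $x_n\in S_n$ with $x_n\to x$; the bound $\rho_n(x_n,y_n)\le D(d(x,y)+o(1))$ keeps these values uniformly bounded on each pair $(x,y)\in\mathcal D\times\mathcal D$, so we may pass to a further subsequence along which $\rho_n(x_n,y_n)\to\rho(x,y)$ for every pair. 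The ultrametric inequality and the two-sided bound $d\le\rho_n\le Dd$ pass to the limit term-by-term, and because the ultrametric inequality together with $\rho\le Dd$ forces $\rho$ to be $D$-Lipschitz in $d$ on $\mathcal D$, $\rho$ extends continuously to an ultrametric on all of $S$ with $d\le\rho\le Dd$.

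The substance of the argument is the verification of~\eqref{eq:compactness theta} for $S$. Given a cover $\{B(x_i,r_i)\}_{i\in I}$ of $S$ and any $\eta>0$, compactness of $S$ applied to the open cover $\{B^\circ(x_i,r_i+\eta)\}_{i\in I}$ produces a finite $F=F(\eta)\subseteq I$ such that $\{B(x_i,r_i+\eta)\}_{i\in F}$ already covers $S$; for $n$ large enough the balls $\{B(x_i,r_i+\eta+h_n)\}_{i\in F}$ therefore cover $S_n$. Replacing each $x_i$ by a nearest net point $\tilde x_i^{(n)}\in N_n$ (with $d(x_i,\tilde x_i^{(n)})\le\delta_n$) yields a cover of $S_n$ by balls centered in $N_n$ with radii $r_i+\eta+h_n+\delta_n$. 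Applying the finite hypothesis and pushing the resulting bound from $\mu_n$ back to $\mu$ via the Voronoi inclusion $\bigcup_{z\in B_{N_n}(\tilde x_i^{(n)},R)}V_z\subseteq B(x_i,R+2\delta_n)$ gives
\[
\sum_{i\in F}\mu\bigl(B(x_i,c(r_i+\eta)+\e_n)\bigr)^\theta\ge\mu(X)^\theta,
\]
where $\e_n\eqdef ch_n+(c+2)\delta_n\downarrow 0$. Since this is a finite sum and $r\mapsto\mu(B(x,r))$ is right-continuous (closed balls nest and $\mu$ is finite), passing $n\to\infty$ along a monotone subsequence of radii gives $\sum_{i\in I}\mu(B(x_i,c(r_i+\eta)))^\theta\ge\mu(X)^\theta$ for every $\eta>0$. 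A final limit $\eta\downarrow 0$, exploiting monotonicity of each summand in $\eta$ together with right-continuity, then yields~\eqref{eq:compactness theta}.

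The main obstacle will be twofold: the diagonal extraction of the limiting ultrametric $\rho$ requires care because the underlying sets $S_n$ move inside $X$ as $n$ grows, and the final $\eta\downarrow 0$ limit in the covering argument cannot be performed on a fixed finite subcover because the size of $F(\eta)$ generally blows up as $\eta$ shrinks. Modulo these technical points, the rest is a routine combination of the Blaschke selection theorem with right-continuity of the finite measure $\mu$ on nested closed balls.
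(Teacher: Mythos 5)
Your limiting construction --- Blaschke selection for the sets $S_n$ followed by a diagonal extraction of $\rho_n$-values over pairs from a countable dense subset of $S$ --- is a legitimate substitute for the ultrafilter argument the paper uses, and the concern you raise about $S_n$ moving inside $X$ is not a real obstruction: once $d_H(S_n,S)\to 0$, approximants $x_n\in S_n$ with $d(x,x_n)\to 0$ are available for every $x\in S$, and the two-sided bound $d\le\rho_n\le D\,d$ transfers along them without difficulty. That part of your plan is sound.

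The genuine gap is the one you flag at the end, and your own diagnosis is correct: the closing step does not go through. Your intermediate conclusion, that $\sum_{i\in I}\mu\bigl(B(x_i,c(r_i+\eta))\bigr)^\theta\ge\mu(X)^\theta$ for every $\eta>0$, does \emph{not} imply $\sum_{i\in I}\mu\bigl(B(x_i,cr_i)\bigr)^\theta\ge\mu(X)^\theta$ when $I$ is infinite. Each summand decreases to its $\eta=0$ value by right-continuity, but a decreasing sum of nonnegative terms has no monotone-convergence theorem from above; the lower bound can evaporate in the limit. For a concrete illustration that the implication itself is false (independently of whether the balls cover $S$): take $X=[0,1]$ with Lebesgue measure, $c=\theta=1$, all $r_i=0$, and $\{x_i\}_{i\in\N}$ countable dense. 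Then $\sum_i\mu(B(x_i,\eta))=\infty$ for every $\eta>0$ while $\sum_i\mu(B(x_i,0))=0$. So the inequality you obtain for every positive $\eta$ is strictly weaker than what is needed, and some additional uniform control over the error is mandatory.

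The missing idea is to replace the uniform enlargement of all radii by $\eta$ with a \emph{per-ball} enlargement controlled by a geometric error budget. For a fixed $\eta>0$, right-continuity of $r\mapsto\mu(B(x_i,r))$ supplies, for each $i$, some $\e_i>0$ with
\[
\mu\bigl(B(x_i,cr_i+c\e_i)\bigr)\le\mu\bigl(B(x_i,cr_i)\bigr)+\Bigl(\frac{\eta}{2^i}\Bigr)^{1/\theta}.
\]
Compactness of $S$ applied to the open enlargements $\{B^\circ(x_i,r_i+\e_i/2)\}_{i\in I}$ still produces a finite subcover indexed by some $I_\eta$, and your discretization step then goes through once $n$ is large relative to $\min_{i\in I_\eta}\e_i$. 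The point is that after pushing the finite-case inequality back through the Voronoi inclusion and applying $(a+b)^\theta\le a^\theta+b^\theta$ (valid since $\theta\le 1$), the total perturbation contributed by the enlargements is at most $\sum_i\eta/2^i=\eta$, \emph{uniformly in the cardinality of $I_\eta$}. One arrives at
\[
\mu(X)^\theta\le\eta+\sum_{i=1}^\infty\mu\bigl(B(x_i,cr_i)\bigr)^\theta,
\]
and now letting $\eta\downarrow 0$ is an honest limit of real numbers, with no infinite sum changing underneath. This summable error budget is precisely what the uniform-$\eta$ scheme lacks.
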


\begin{proof}
Let $(X,d,\mu)$ be a metric measure space and let $X_n$ be a
$\frac1{n}$-net in $X$, i.e., $d(x,y)>\frac1{n}$ for all distinct
$x,y\in X_n$, and $d(x,X_n)\le \frac1{n}$ for all $x\in X$. Since
$X$ is compact, $X_n$ is finite. Write
$X_n=\{x^n_1,x_2^n,\ldots,x^n_{k_n}\}$ and for $j\in
\{1,\ldots,k_n\}$ define
$$
j_n(x)\eqdef \min\left\{i\in \{1,\ldots,k_n\}:\ d\left(x,x_i^n\right)=d(x,X_n)\right\}.
$$
Consider the Voronoi tessellation $\{V_1^n,\ldots,V_{k_n}^n\}\subseteq 2^X$ given by
$$
V_j^n\eqdef \left\{x\in X:\ j_n(x)=j\right\}.
$$
Thus $\{V_1^n,\ldots,V_{k_n}^n\}$ is a Borel partition of $X$, and
we can define a measure $\mu_n$ on $X_n$ by $\mu_n(x_j^n)=\mu(V_j)$.
Note that by definition $\mu_n(X_n)=\mu(X)$.

The assumption of Lemma~\ref{lem:compactness} applied to the finite
metric measure space $(X_n,d,\mu_n)$ yields a subset $S_n\subseteq
X_n$, an ultrametric space $(U_n,\rho_n)$ and a mapping $f_n:S_n\to
U_n$ such that $d(x,y)\le \rho_n(f_n(x),f_n(y))\le Dd(x,y)$ for all
$x,y\in S_n$. Moreover, if $\{z_j\}_{j\in J}\subseteq X_n$ and
$\{r_j\}_{j\in J}\subseteq [0,\infty)$ satisfy $\bigcup_{j\in J}
B(z_j,r_j)\supseteq S_n$ then
\begin{equation}\label{eq:compactness theta-n}
\sum_{j\in J} \mu_n\left(X_n\cap B(z_j,cr_j)\right)^\theta\ge
\mu_n(X_n)^\theta=\mu(X)^\theta.
\end{equation}

Let $\U$ be a free ultrafilter over $\N$.  Since the Hausdorff
metric $d_H$ (recall~\eqref{eq:def haus dist}) on the space of closed subsets of $X$ is
compact (e.g.~\cite[Thm. 7.3.8]{Burago}), there exists a
closed subset $S\subseteq X$ such that $\lim_{n\to \U}
d_H(S,S_n)=0$.

Define $\rho:S\times S\to [0,\infty)$ as follows. For $x,y\in S$
there are $(x_n)_{n=1}^\infty,(y_n)_{n=1}^\infty\in
\prod_{n=1}^\infty S_n$ such that $\lim_{n\to \U}
d(x,x_n)=\lim_{n\to \U} d(y,y_n)=0$.  Set $\rho(x,y)=\lim_{n\to \U}
\rho_n(f_n(x_n),f_n(y_n))$. This is well defined, i.e., $\rho(x,y)$
does not depend on the choice of
$(x_n)_{n=1}^\infty,(y_n)_{n=1}^\infty$. Indeed, if
$(x_n')_{n=1}^\infty,(y_n')_{n=1}^\infty\in \prod_{n=1}^\infty S_n$
also satisfy $\lim_{n\to \U} d(x,x'_n)=\lim_{n\to \U} d(y,y'_n)=0$
then
\begin{eqnarray*}
&&\!\!\!\!\!\!\!\!\!\!\!\!\!\!\!\!\!\!\!\!\!\!\!\!\lim_{n\to \U}
\rho_n(f_n(x_n),f_n(y_n))\\&\le& \lim_{n\to \U}
\rho_n(f_n(x'_n),f_n(y'_n))+\lim_{n\to \U}
\rho_n(f_n(x_n),f_n(x'_n))+\lim_{n\to \U}
\rho_n(f_n(y_n),f_n(y'_n))\\
&\le& \lim_{n\to \U} \rho_n(f_n(x'_n),f_n(y'_n))+D\lim_{n\to \U}
d(x_n,x'_n)+D\lim_{n\to \U} d(y_n,y'_n)\\&=&\lim_{n\to \U}
\rho_n(f_n(x'_n),f_n(y'_n)),
\end{eqnarray*}
so that by symmetry $\lim_{n\to \U}
\rho_n(f_n(x_n),f_n(y_n))=\lim_{n\to \U}
\rho_n(f_n(x'_n),f_n(y'_n))$. It is immediate to check that
$d(x,y)\le \rho(x,y)\le Dd(x,y)$ for all $x,y\in S$ and that $\rho$
is an ultrametric on $S$.

Now, let $\{x_i\}_{i=1}^\infty\subseteq X$ and $\{r_i\}_{i=1}^\infty\subseteq [0,\infty)$ satisfy $\bigcup_{i=1}^\infty
B(x_i,r_i)\supseteq S$. Fix $\eta>0$. For every $i\in \N$ there is
$\e_i\in (0,1)$ such that
\begin{equation}\label{eq:perturbation}
\mu\left(B(x_i, cr_i+c\e_i)\right)\le
\mu\left(B(x_i,cr_i)\right)+\left(\frac{\eta}{2^i}\right)^{1/\theta}.
\end{equation}
Since $S$ is compact, there exists a finite subset $I_\eta\subseteq
\N$ such that $\bigcup_{i\in I_\eta} B(x_i,r_i+\e_i/2)\supseteq S$.
Denote $\e=\min_{i\in I_\eta} \e_i$. By definition of $S$ there
exists $n\in \N$ such that $n>8/\e$ and $d_H(S_n,S)<\e/8$. For every
$i\in I_\eta$ let $z_i^n\in X_n$ satisfy $d(z_i^n,x_i)=d(x_i,X_n)\le
1/n<\e/8$. Now, $\bigcup_{i\in I_\eta} B(z_i^n,r_i+3\e_i/4)\supseteq
S_n$ because $d_H(S_n,S)<\e/8$ and $\bigcup_{i\in I_\eta}
B(x_i,r_i+\e_i/2)\supseteq S$. An application
of~\eqref{eq:compactness theta-n} now yields the bound
\begin{equation}\label{eq:use theta}
\sum_{i\in I_\eta}\mu_n\left(X_n\cap
B\left(z_i^n,cr_i+\frac{3c\e_i}{4}\right)\right)^\theta\ge \mu(X)^\theta.
\end{equation}
By the definition of $\mu_n$,
\begin{eqnarray}\label{eq:pass to cont}
&&\!\!\!\!\!\!\!\!\!\!\!\!\!\!\!\!\!\!\!\!\!\!\!\!\!\!\!\!\!\!\nonumber\mu_n\left(X_n\cap
B\left(z_i^n,cr_i+\frac{3c\e_i}{4}\right)\right)\\&=&\mu\left(\bigcup\left\{V_j^n:\
j\in \{1,\ldots,k_n\}\ \wedge\  x_j^n\in
B\left(z_i^n,cr_i+\frac{3c\e_i}{4}\right)\right\}\right)\nonumber\\
&\le&\mu\left(B\left(z_i^n,cr_i+\frac{3c\e_i}{4}+\frac{1}{n}\right)\right)\nonumber\\
&\le& \mu\left(B\left(x_i,cr_i+\frac{3c\e_i}{4}+\frac{2}{n}\right)\right) \nonumber\\
&\le& \mu\left(B\left(x_i,cr_i+c\e_i\right)\right)\nonumber\\
&\stackrel{\eqref{eq:perturbation}}{\le}& \mu\left(B(x_i,cr_i)\right)+\left(\frac{\eta}{2^i}\right)^{1/\theta}.
\end{eqnarray}
Hence,
\begin{multline}\label{eq:eta->0}
\mu(X)^\theta\stackrel{\eqref{eq:use theta}\wedge\eqref{eq:pass to cont}}{\le}\sum_{i\in I_\eta}\left(\mu\left(B(x_i,cr_i)\right)+\left(\frac{\eta}{2^i}\right)^{1/\theta}\right)^\theta\\\le
\sum_{i\in I_\eta}\left(\mu\left(B(x_i,cr_i)\right)^\theta+\frac{\eta}{2^i}\right)\le \eta+\sum_{i=1}^\infty\mu\left(B(x_i,cr_i)\right)^\theta.
\end{multline}
Since~\eqref{eq:eta->0} holds for all $\eta>0$, the proof of Lemma~\ref{lem:compactness} is complete.
\end{proof}

\medskip
\noindent{\bf Assumptions.}
 Due to  Lemma~\ref{lem:compactness} we assume from here through the end
of Section~\ref{sec:NT}  that
$(X,d,\mu)$ is a finite metric measure space. By restricting to the support of $\mu$, we assume throughout that $\mu(\{x\})>0$ for all $x\in X$. By rescaling the
metric, assume also that $\diam(X)=1$.

\section{Combinatorial trees and fragmentation maps}
\label{sec:comb}
The ensuing arguments rely on a variety of constructions involving combinatorial trees. We will work only with finite rooted trees, i.e., finite graph-theoretical trees $T$ with a distinguished vertex $r(T)$ called the root of $T$. We will slightly abuse notation by identifying $T$ with its vertex set, i.e., when we write $v\in T$ we mean that $v$ is a vertex of $T$. We shall say that $u\in T$ is an ancestor of $v\in T\setminus \{u\}$ if $u$ lies on the path joining $v$ and $r(T)$. In this case we also say that $v$ is a descendant of $u$. We say that $v$ is a weak descendant (respectively weak ancestor) of $u$ if it is either a descendant (respectively ancestor) of $u$ or $v=u$. If $u$ is either a weak ancestor of $v$ or a weak descendant of $v$ we say that $u$ and $v$ are comparable, and otherwise we say that they are incomparable. The leaves of $T$, denoted $\L(T)\subseteq T$, is the set of vertices of $T$ that do not have descendants.

\begin{definition}[Cut set]\label{def:cut set}
Let $T$ be a rooted tree. A subset $S\subseteq T$ is called a cut set of $T$ if any root-leaf path in $T$ intersects $S$. Equivalently, $S$ is a cut set of $T$ if every $u\in T$ is comparable to a vertex in $S$. See~\cite[Ch.~4 \& Sec.~12.4]{MP10}.
\end{definition}

If $v\in T\setminus \{r(T)\}$ then we denote by $\p(v)=\p_T(v)$ its parent in $T$, i.e., the vertex adjacent to $v$ on the path joining $v$ and $r(T)$. We say that $v\in T\setminus\{r(t)\}$ is a child of $u\in T$ if $\p(v)=u$, and the set $\p^{-1}(u)=\{v\in T:\ \p(v)=u\}$ is the set of children of $u$. Thus $\L(T)=\{u\in T:\ \p^{-1}(u)=\emptyset\}$. If $u,v\in T\setminus\{r(T)\}$ are distinct and satisfy $\p(u)=\p(v)$ then we say that $u$ and $v$ are siblings in $T$.

The depth of $u\in T$, denoted $\depth_T(u)$, is  the number of edges on the path joining $u$ and $r(T)$. Thus $\depth_T(r(T))=0$. The least common ancestor of $u,v\in T$, denoted $\lca(u,v)=\lca_T(u,v)$, is the vertex of maximal depth that is an ancestor of both $u$ and $v$.

\begin{definition}[Subtree]
\label{def:subtree} Let $T$ be a finite rooted tree. A subtree $T'$
of $T$ is a connected rooted subgraph of $T$ whose set of leaves is
a subset of the leaves of $T$, i.e., $\L(T')\subseteq \L(T)$.
\end{definition}

Given $u\in T$, we denote by $T_u\subseteq T$ the subtree rooted at
$u$, i.e., the tree consisting of all the weak descendants of $u$ in
$T$, with the edges inherited from $T$. Thus $r(T_u)=u$.

\begin{definition}\label{def:D*}
Let $T$ be a rooted tree and $A\subseteq T$. For $u\in T$ define
$D_T(u,A)\subseteq T$ to be the set of all $v\in A$ such that
$v$ is a descendant of $u$ and no ancestor of $v$ is also in $A$ and
is a descendant of $u$. Note that $D_T(u,A)=\emptyset$ if $u$ has no descendants in $A$, and  $D_T(u,A)$ is a cut
set of the subtree $T_u$ if $A\cap T_u$ is a cut-set in $T_u$ (this happens in particular if $A$ contains the leaves of $T_u$). We also define
\begin{equation}\label{eq:def D^*}
D_T^*(u,A)=\left\{\begin{array}{ll}D_T(u,A) & \mathrm{if}\ u\in T\setminus A,\\
\{u\}& \mathrm{if}\ u\in A.\end{array}\right.
\end{equation}
\end{definition}

\medskip

 Trees interact with metric spaces via the notion of fragmentation maps.
\begin{definition}[Fragmentation map]\label{def:frag map}
Let $(X,d)$ be a finite metric space. A fragmentation map of $X$ is a function $\F:T\to 2^X$, where $T$ is a finite rooted tree that satisfies the following conditions.
\begin{itemize}
\item $\F(r(T))=X$.
\item If $v\in \L(T)$ is a leaf of $T$ then $\F(v)$ is a singleton, i.e., $\F(v)=\{x\}$ for some $x\in X$.
\item If $v\in T\setminus \{r(T)\}$ the $\F(v)\subseteq \F(\p(v))$.
\item If $u,v\in T$ are incomparable then $\F(u)\cap\F(v)=\emptyset$.
\end{itemize}
In what follows, given a fragmentation map $\F:T\to 2^X$, we will use the notation $\F_u=\F(u)$.
\end{definition}

\begin{definition}[Boundary of a fragmentation map]\label{def:boundary}
The boundary of a fragmentation map $\F:T\to 2^X$ is a new map $\partial \F: T\to 2^X$ defined as follows. For $u\in T$ the set $\partial \F(u)=\partial \F_u$ is the subset of $X$ corresponding to the image under $\F$ of the leaves of the subtree $T_u$, i.e.,
$$
\partial \F_u =\bigcup_{v\in \L(T_u)}\F_v.
$$
Note that we always have $\partial \F_u\subseteq \F_u$.
\end{definition}

\begin{definition}[Partition map]\label{def:partition map}
A partition map is a fragmentation map $\F:T\to 2^X$ such that $\partial \F_{r(T)}=X$.
Note that in this case $\partial \F=\F$.
\end{definition}

Up to this point the metric on $X$ did not play any role.
The following definition is one out of  two definitions that tie  the structure of a fragmentation map $\F:T\to 2^X$ to the geometry of $X$
(the second definition, called \emph{the separation property}, will be introduced in Section~\ref{sec:proof-lem:3}).

\begin{definition}[Lacunary fragmentation map]\label{def:lacunary}
Given $K,\gamma\in (0,\infty)$, a fragmentation map $\F:T\to 2^X$ is $(K,\gamma)$-lacunary if for every $q\in T$ and every $u\in T$ such that $u$ is a weak descendant of $q$ and $u$ has at least two children, i.e., $|\p^{-1}(u)|>1$, we have
\begin{equation}\label{eq:def lacunary}
\diam\left(\F_q\right)\le K\gamma^{\depth_T(u)-\depth_T(q)}\cdot\min_{\substack{v,w\in \p^{-1}(u)\\v\neq w}} d\left(\partial\F_v,\partial\F_w\right).
\end{equation}
\end{definition}

\begin{lemma}\label{lem:sep->ultra}
Let $\F:T\to X$ be a $(K,\gamma)$-lacunary fragmentation map of a metric space $(X,d)$. Then $(\partial\F_{r(T)},d)$ embeds with distortion $K$ into an ultrametric space.
\end{lemma}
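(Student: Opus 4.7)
The natural strategy is to define an ultrametric $\rho$ on $\partial\F_{r(T)}$ directly from the tree structure, using the image-diameters of internal vertices as the ``scales'' of the ultrametric. Given $x\in \partial\F_{r(T)}$, since the leaves have singleton images and the images of incomparable vertices are disjoint, there is a unique leaf $\ell_x\in \L(T)$ with $\F_{\ell_x}=\{x\}$. For distinct $x,y\in \partial\F_{r(T)}$, set
\[
\rho(x,y)\eqdef \diam\bigl(\F_{\lca_T(\ell_x,\ell_y)}\bigr),
\]
and $\rho(x,x)=0$. I would first check that $\rho$ is positive for $x\neq y$: if $u=\lca(\ell_x,\ell_y)$ then $u$ has at least two children $v,w$ with $\ell_x$ a weak descendant of $v$ and $\ell_y$ a weak descendant of $w$, so the lacunary inequality~\eqref{eq:def lacunary} applied with $q=u$ forces $\diam(\F_u)\ge d(\partial\F_v,\partial\F_w)/K>0$ (strict positivity coming from the disjointness of $\partial\F_v$ and $\partial\F_w$ as subsets of leaves' images under incomparable vertices).

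Next I would verify the strong (ultrametric) triangle inequality. The key is the standard tree fact that among the three least common ancestors $\lca(\ell_x,\ell_y)$, $\lca(\ell_x,\ell_z)$, $\lca(\ell_y,\ell_z)$, two of them coincide and the third is a weak descendant of that common value. Hence if $u=\lca(\ell_x,\ell_y)$ then either $\lca(\ell_x,\ell_z)$ or $\lca(\ell_y,\ell_z)$ is a weak ancestor of $u$; by the nesting property of fragmentation maps, $\F_u$ is contained in $\F$ of that ancestor, so $\diam(\F_u)$ is bounded above by the corresponding diameter, giving
\[
\rho(x,y)\le \max\bigl\{\rho(x,z),\rho(y,z)\bigr\}.
\]

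Finally I would establish the two distortion bounds. For the lower bound $d(x,y)\le \rho(x,y)$, note that both $x$ and $y$ lie in $\F_u$ with $u=\lca(\ell_x,\ell_y)$, so $d(x,y)\le \diam(\F_u)=\rho(x,y)$. For the upper bound $\rho(x,y)\le Kd(x,y)$, I apply Definition~\ref{def:lacunary} with the choice $q=u$, making the exponent of $\gamma$ equal to zero and eliminating $\gamma$ from the estimate. Since $\ell_x\in\L(T_v)$ and $\ell_y\in\L(T_w)$ for the distinct children $v,w\in\p^{-1}(u)$ singled out above, we have $x\in\partial\F_v$ and $y\in\partial\F_w$, hence
\[
\rho(x,y)=\diam(\F_u)\le K\cdot d(\partial\F_v,\partial\F_w)\le K\cdot d(x,y).
\]
There is no real obstacle: the argument is a clean bookkeeping exercise once the correct definition of $\rho$ is written down, and the only subtle point is recognizing that the $q=u$ specialization of~\eqref{eq:def lacunary} is exactly what is needed, so the parameter $\gamma$ plays no role in this particular lemma.
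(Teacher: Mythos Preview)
Your proof is correct and is essentially identical to the paper's own argument: both define $\rho(x,y)=\diam(\F_{\lca(\ell_x,\ell_y)})$, verify $d\le\rho$ from containment, obtain $\rho\le Kd$ by applying~\eqref{eq:def lacunary} with $q=u=\lca(\ell_x,\ell_y)$, and check the ultrametric inequality via the standard lca trichotomy. The paper omits the explicit positivity check you include, but otherwise the two proofs coincide.
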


\begin{proof}
For $x,y\in \partial\F_{r(T)}$ there are $a,b\in \L(T)$ such that $\F_a=\{x\}$ and $\F_b=\{y\}$. Define
$$
\rho(x,y)=\diam\left(\F_{\lca(a,b)}\right).
$$
Since $x,y\in \F_{\lca(a,b)}$ we have $d(x,y)\le \rho(x,y)$. Assume that $x\neq y$. Then $\lca(a,b)$ has distinct children $v,w\in \p^{-1}(\lca(a,b))$ such that $x\in \partial\F_v$ and $y\in \partial\F_w$. An application of~\eqref{eq:def lacunary} to $q=u=\lca(a,b)$ shows that $\rho(x,y)\le Kd(x,y)$. It remains to note that $\rho$ is an ultrametric. Indeed, take $a,b,c\in \L(T)$ and write $\F_a=\{x\}$, $\F_b=\{y\}$, $\F_c=\{w\}$. If $\lca(a,b)$ is a weak descendant of $\lca(b,c)$ then $\F_{\lca(a,b)}\subseteq \F_{\lca(b,c)}$, implying that $\rho(x,y)\le \rho(y,z)$. Otherwise $\lca(a,b)\in \{\lca(a,c),\lca(b,c)\}$, implying that $\rho(x,y)=\max\{\rho(x,z),\rho(y,z)\}$.
\end{proof}

The proof of Lemma~\ref{lem:sep->ultra} did not use the full
strength of Definition~\ref{def:lacunary}. Specifically, the
parameter $\gamma$ did not appear, and we could have used a weaker
variant of~\eqref{eq:def lacunary} in which the left hand side is
$\diam(\partial \mathcal F_q)$ instead of $\diam(\mathcal F_q)$. The
full strength of the $(K,\gamma)$-lacunary condition will be used in
the ensuing arguments since they allow us to have better control on
restrictions of fragmentation maps to subtrees of $T$.

\section{From fragmentation maps to covering theorems}

Here we show how a lacunary fragmentation map which satisfies a certain cut-set inequality can be used to prove a covering theorem in the spirit of the conclusion of Theorem~\ref{thm:measure}. This is the content of the following lemma.

\begin{lemma} \label{lem:cut-set-to-cover}
Fix $K,\gamma\in (0,\infty)$ and $\theta\in (0,1)$.
Let $(X,d,\mu)$ be a finite metric measure space. Assume that there exists a $(K,\gamma)$-lacunary fragmentation map $\G:T\to 2^X$ such that every leaf $\ell\in\L(T)$ has no siblings, and furthermore $\G_{\p(\ell)}=\G_{\ell}$. Suppose also that for any cut-set $G$ of $T$ we have
\[ \sum_{v\in G} \mu(\G_{\p(v)})^\theta \ge \mu(X)^\theta,\]
where  if $v\in T$ is the root then we set $\p(v)=v$.

Then
for any $\{x_i\}_{i\in I}\subseteq X$ and $\{r_i\}_{i\in I}\subseteq [0,\infty)$ such that the $d$-balls $\{B_d(x_i,r_i)\}_{i\in I}$ cover $\partial\G_{r(T)}$, we have
\begin{equation}\label{eq:expanded ball}
 \sum_{i\in I} \mu\left(B_d\left(x_i,\left(1+2K^2\gamma\right)r_i\right)\right)^{\theta} \ge  \mu(X)^{\theta} .
 \end{equation}
\end{lemma}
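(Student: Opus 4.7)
The plan is to build a cut-set $G$ from the cover via least common ancestors and to match $G$ injectively into the index set $I$, then combine the cut-set hypothesis with a pointwise inclusion to get \eqref{eq:expanded ball}.

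For each $i\in I$, set $L_i=\{\ell\in\L(T):\G_\ell\subseteq B_d(x_i,r_i)\}$; note that $\G_\ell=\{y_\ell\}$ is a singleton by the fragmentation axioms. For $i$ with $L_i\ne\emptyset$ define $u_i=\lca_T(L_i)$ and take
\[
G=\{u_i:i\in I,\ L_i\ne\emptyset\}.
\]
Since the balls cover $\partial\G_{r(T)}$, every leaf $\ell$ belongs to some $L_{i(\ell)}$, and then $u_{i(\ell)}\in G$ is a weak ancestor of $\ell$; thus $G$ is a cut-set of $T$. Moreover, the fibres $\{i\in I:u_i=v\}$ are pairwise disjoint as $v$ ranges over $G$, so selecting one representative $i(v)$ from each fibre defines an injection $i(\cdot):G\to I$.

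The key geometric step is to verify, for each $v\in G$,
\[
\G_{\p(v)}\subseteq B_d\bigl(x_{i(v)},(1+2K^2\gamma)r_{i(v)}\bigr).
\]
If $v$ is a leaf, then $L_{i(v)}=\{v\}$, and the lemma's hypothesis $\G_{\p(v)}=\G_v=\{y_v\}$ together with $y_v\in B_d(x_{i(v)},r_{i(v)})$ gives the inclusion immediately. If $v$ is internal, then $|L_{i(v)}|\ge 2$ and $v=\lca_T(L_{i(v)})$ forces $L_{i(v)}$ to contain leaves in at least two distinct subtrees $T_{v'}$ and $T_{w'}$ with $v',w'\in\p^{-1}(v)$. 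Picking $\ell'\in L_{i(v)}\cap\L(T_{v'})$ and $\ell''\in L_{i(v)}\cap\L(T_{w'})$, both $y_{\ell'}\in\partial\G_{v'}$ and $y_{\ell''}\in\partial\G_{w'}$ lie in $B_d(x_{i(v)},r_{i(v)})$, so
\[
d(\partial\G_{v'},\partial\G_{w'})\le d(y_{\ell'},y_{\ell''})\le 2r_{i(v)}.
\]
Applying $(K,\gamma)$-lacunarity with $q=\p(v)$ and $u=v$ (valid since $|\p^{-1}(v)|\ge 2$) yields $\diam(\G_{\p(v)})\le 2K^2\gamma r_{i(v)}$, where the factor $K^2\gamma$ absorbs the depth exponent using $K\ge 1$ (which is forced by the lacunary definition itself, and which together with $\gamma\ge 1$ also handles the boundary case $v=r(T)$ where $\p(v)=v$). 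Combined with $y_{\ell'}\in\G_{\p(v)}\cap B_d(x_{i(v)},r_{i(v)})$, this gives $\G_{\p(v)}\subseteq B_d(x_{i(v)},(1+2K^2\gamma)r_{i(v)})$.

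The conclusion is then immediate: by the cut-set hypothesis, the pointwise inclusion, and the injectivity of $v\mapsto i(v)$,
\[
\mu(X)^\theta\le\sum_{v\in G}\mu(\G_{\p(v)})^\theta\le\sum_{v\in G}\mu\bigl(B_d(x_{i(v)},(1+2K^2\gamma)r_{i(v)})\bigr)^\theta\le\sum_{i\in I}\mu\bigl(B_d(x_i,(1+2K^2\gamma)r_i)\bigr)^\theta,
\]
which is exactly \eqref{eq:expanded ball}. The main obstacle is combinatorial, namely producing a cut-set that admits an injective matching into $I$; the LCA construction handles this automatically since $u_i$ is a function of $i$, while lacunarity converts the fact that an LCA "branches" at its vertex into the required diameter control on $\G_{\p(u_i)}$.
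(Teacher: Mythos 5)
Your proof is correct and takes a genuinely different route from the paper's. The paper proves this lemma by first passing to the ultrametric $\rho$ on $\partial\G_{r(T)}$ given by Lemma~\ref{lem:sep->ultra}: it replaces each ball $B_d(x_i,r_i)$ by a $\rho$-ball of radius $2Kr_i$ centered at a nearest point $y_i\in\partial\G_{r(T)}$, observes that a $\rho$-ball in an ultrametric coincides with some $\partial\G_{v_i}$, and takes $\{v_i\}$ as the cut set; lacunarity plus the bound $\diam_d(\partial\G_{v_i})\le\diam_\rho(\partial\G_{v_i})\le 2Kr_i$ then gives the factor $2K^2\gamma$, one $K$ coming from the $d$-to-$\rho$ radius inflation and the other from lacunarity. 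You instead stay entirely in the $d$-metric, pick out for each ball the set $L_i$ of leaves whose singletons land inside it, and use $u_i=\lca_T(L_i)$; the branching of the LCA directly furnishes two children with boundary points in the same ball, so the lacunary bound yields $\diam(\G_{\p(u_i)})\le 2K\gamma r_i$, which is actually a slightly sharper constant than the one stated. This is a neat simplification: it avoids introducing $\rho$ at all and makes the cut-set construction purely combinatorial.

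Two small points worth noting. First, your absorption of $2K\gamma$ into $2K^2\gamma$ uses $K\ge 1$, and for the boundary case $u_i=r(T)$ (where the depth exponent in the lacunary bound is $0$, giving a factor $K$ rather than $K\gamma$) you need $K\gamma\ge 1$. Neither inequality is literally in the hypotheses, but both are forced by the lacunary condition whenever $T$ has a branching vertex and $X$ is not a singleton: applying~\eqref{eq:def lacunary} with $q=r(T)$ and $u$ a branching vertex of depth $n$ gives $K\gamma^n\ge 1$, so $K\ge 1$ and then $K\gamma\ge K^{1-1/n}\ge 1$. The paper's proof implicitly relies on the same facts (its use of $d\le\rho\le Kd$ already presupposes $K\ge 1$, and it writes $K\gamma$ in~\eqref{eq:use lacunary v_i} even in the root case). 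Second, you should also check, as you do implicitly, that the internal vertices $u_i$ always have at least two children --- this is automatic since the LCA of a set of leaves contained in a single child-subtree would be strictly deeper --- so lacunarity is legitimately applicable. With those observations made explicit, your argument is complete.
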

\begin{proof}
Without loss of generality assume that $\partial\G_{r(T)}\cap B_d(x_i,r_i)\ne \emptyset$ for all $i\in I$.
Let $\rho$ be the ultrametric induced by $\G$ on $\partial\G_{r(T)}$, as constructed in the proof of Lemma~\ref{lem:sep->ultra}. Thus for $x,y\in \partial \G_{r(T)}$ we have $\rho(x,y)=\diam_d\left(\G_{\lca(a,b)}\right)$, where $a,b\in \L(T)$ satisfy $\G_a=\{x\}$ and $\G_{b}=\{y\}$.
Note that
this definition implies that
\begin{equation}\label{eq:diam rho}
\forall v\in T,\quad \diam_\rho(\partial \G_v)=\diam_d(\G_v).
\end{equation}
By Lemma~\ref{lem:sep->ultra} we know that
\begin{equation}\label{eq:K-equiv}
\forall x,y\in \partial\G_{r(T)},\quad d(x,y) \le \rho(x,y)\le K d(x,y).
\end{equation}

For every $i\in I$ choose $y_i\in \partial\G_{r(T)}$ satisfying
\begin{equation}\label{eq:closest def}
d(x_i,y_i)=\min_{y\in \partial\G_{r(T)}} d(x_i,y)\le r_i.
\end{equation}
Then $B_d(y_i,2r_i)\supseteq B_d(x_i,r_i)$.
Hence the balls $\{B_d(y_i,2r_i)\}_{i\in I}$ also cover $\partial\G_{r(T)}$.  By~\eqref{eq:K-equiv} we have
$B_\rho(y_i,2K r_i) \supseteq  B_d(y_i,2r_i)\cap \partial\G_{r(T)}$, so we also know that the $\rho$-balls $\{B_\rho(y_i,2K r_i)\}_{i\in I}$ cover $\partial\G_{r(T)}$.

For $i\in I$ choose $v_i\in T$ as follows.
If
$B_\rho(y_i,2K r_i)$ is a singleton then $v_i$ is defined to be the leaf of $T$ such that $\G_{v_i}=\{y_i\}$.
Otherwise pick $v_i$ to be the highest ancestor of  $y_i$ in $T$ such that
\begin{equation}\label{eq:diam bound v_i}
\diam_\rho(\partial \G_{v_i}) \le 2K r_i
\end{equation}
and $v_i$ has at least two children. Then
 $B_\rho(y_i,2K r_i)=\partial\G_{v_i}$. Hence, since $\{B_\rho(y_i,2K r_i)\}_{i\in I}$ cover $\partial\G_{r(T)}$, we have  $\bigcup_{i\in I} \L \left(T_{v_i}\right) = \L(T)$. If we set $G=\{v_i\}_{i\in I}$ then we conclude that $G$ is a cut-set of $T$. Our assumption therefore implies that
\begin{equation}\label{eq:use parent lemma}
 \sum_{i\in I} \mu\left(\G_{\p(v_i)}\right)^{\theta} \ge
 \mu(X)^{\theta} .
 \end{equation}

When $v_i$ has at least two children we deduce from the fact that $\G$ is $(K,\gamma)$-lacunary that
\begin{equation}\label{eq:use lacunary v_i}
\diam_d\left(\G_{\p(v_i)}\right)\le K\gamma \diam_d(\partial\G_{v_i}).
\end{equation}
(Recall Definition~\ref{def:lacunary} with $q=\p(v_i)$ and $u=v_i$.) When $v_i$ is a leaf
our assumptions imply that $\G_{v_i}$ and $\G_{\p(v_i)}$ are both singletons, and therefore  $\diam_d\left(\G_{\p(v_i)}\right)=\diam_d\left(\partial\G_{v_i}\right)=0$, so~\eqref{eq:use lacunary v_i} holds in this case as well. Hence for every $i\in I$ and $z\in \G_{\p(v_i)}$ we have
\begin{multline}\label{eq:punchline cover}
d(z,x_i)\le d(x_i,y_i)+d(z,y_i)\stackrel{\eqref{eq:closest def}}{\le} r_i+d(z,y_i)\stackrel{(\clubsuit)}{\le} r_i+\diam_d\left(\G_{\p(v_i)}\right)\\\stackrel{\eqref{eq:use lacunary v_i}}{\le} r_i+K\gamma \diam_d(\partial\G_{v_i})
\stackrel{\eqref{eq:K-equiv}}{\le}
r_i+K\gamma \diam_\rho(\partial\G_{v_i})\stackrel{\eqref{eq:diam bound v_i}}{\le} r_i+2K^2\gamma r_i,
\end{multline}
where $(\clubsuit)$ follows from the fact that $y_i\in \partial\G_{v_i}\subseteq \G_{v_i} \subseteq \G_{\p(v_i)}$. The validity of~\eqref{eq:punchline cover} for all $z\in \G_{\p(v_i)}$ is the same as the inclusion $\G_{\p(v_i)}\subseteq B_d\left(x_i,\left(1+2K^2\gamma\right)r_i\right)$. Now~\eqref{eq:expanded ball} follows from~\eqref{eq:use parent lemma}.
\end{proof}

In light of Lemma~\ref{lem:cut-set-to-cover}, our goal is to construct a fragmentation map $\F:T\to 2^X$ satisfying the assumptions of
Lemma~\ref{lem:cut-set-to-cover} with $\theta=1-\e$, such that $(\partial F_{r(T)},d)$ embeds into an ultrametric space with distortion $O(1/\e)$.  Note that the $(K,\gamma)$-lacunary assumption in Lemma~\ref{lem:cut-set-to-cover}
implies by Lemma~\ref{lem:sep->ultra} that $(\partial F_{r(T)},d)$ embeds into an ultrametric  space with distortion $K$. However, more work will be needed in order to obtain the desired $O(1/\e)$ distortion.

In what follows we use the following notation.

\begin{definition}\label{def:theta(D)}
Given $D\in (2,\infty)$ let $\theta(D)\in (0,1)$ denote the unique solution of the equation \begin{equation}\label{eq:def theta(D)}
\frac{2}{D}=(1-\theta)\theta^{\frac{\theta}{1-\theta}}.\end{equation}
It is elementary to check that
\begin{equation}\label{eq:lower theta 2e}
\forall D\in (2,\infty),\quad\theta(D)\ge 1-\frac{2e}{D},\end{equation} and
\begin{equation}\label{eq:lower theta near 2}\forall\delta\in (0,1/2),\quad\theta(2+\delta)\ge \frac{c\delta}{\log(1/\delta)},\end{equation}
 where $c\in (0,\infty)$ is a universal constant.
\end{definition}

The following key lemma describes the fragmentation map that we will construct.

\begin{lemma} \label{lem:3} Fix $D\in (2,\infty)$, an integer $k\ge 2$, and $\tau\in\left(0,\frac{D-2}{3D+2}\right)$. Let $(X,d,\mu)$ be  a finite metric measure space of diameter $1$. Then there exists a fragmentation map $\G:T\to 2^X$ with the following properties.
\begin{itemize}
\item Every leaf $\ell\in\L(T)$ has no siblings, and furthermore $\G_{\p_T(\ell)}=\G_{\ell}$.
\item
$\G$ is
$\left(\frac{2}{1-3\tau} \tau^{-4k^2},\tau^{-4k^2}\right)$-lacunary.
\item $\left(\partial\G_{r(T)},d\right)$ embeds into an ultrametric space with distortion $D$.
\item Every cut-set $G\subseteq T$ (recall Definition~\ref{def:cut set}) satisfies
\begin{equation}\label{eq:lemma goal}
\sum_{v\in G}\mu\left(\p_T(v)\right)^{\left(1-\frac1{k}\right)^2\theta\left(\frac{1-3\tau}{1+\tau}D\right)}\ge \mu(X)^{\left(1-\frac1{k}\right)^2\theta\left(\frac{1-3\tau}{1+\tau}D\right)},
\end{equation}
where $\p_T(v)$ is the parent of $v$ in $T$ if $v$ is not the root, and the root if $v$ is the root.
\end{itemize}
\end{lemma}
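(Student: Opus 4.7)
The plan is to construct $\G$ in three stages, combining the technical ingredients developed in Sections~\ref{sec:initilize}--\ref{sec:NT}. First, I would invoke the bottom-up initialization of Section~\ref{sec:initilize} to produce a $(K_0,\gamma)$-lacunary fragmentation map $\F_0:T_0\to 2^X$ with $\gamma = \tau^{-4k^2}$ and $K_0 = \frac{2}{1-3\tau}\gamma$. Starting from singletons at the leaves, points are greedily grouped at geometrically growing scales $\gamma^{-j}$ so that vertices at depth $j$ correspond to well-separated subsets of diameter $\lesssim\gamma^{-j}$; a dummy padding step then ensures that each leaf has no siblings and satisfies $\F_{0,\p(\ell)}=\F_{0,\ell}$. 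At this stage the cut-set inequality holds only trivially with exponent $1$, and the ultrametric distortion given by Lemma~\ref{lem:sep->ultra} is only $K_0$, which is much larger than $D$.

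Second, to sharpen the cut-set inequality, I would apply the H\"older-based pruning of Section~\ref{sec:holder}. Set $\theta_1 = \theta\!\left(\frac{(1-3\tau)D}{1+\tau}\right)$. At each branching vertex $u$ of $T_0$, H\"older's inequality gives $\sum_{v\in\p^{-1}(u)}\mu(\F_{0,v})^{\theta_1} \le |\p^{-1}(u)|^{1-\theta_1}\mu(\F_{0,u})^{\theta_1}$; the defining equation~\eqref{eq:def theta(D)} is calibrated so that this loss factor is precisely absorbed by the gain at the next level, provided one restricts to children that carry enough mass. Aligning these per-vertex prunings into a single subtree $T_1\subseteq T_0$ demands a pigeonhole argument across the $4k^2$ scales in each super-scale block: only $k$ of those depths can serve as common pruning depths, which costs a factor $(1-1/k)$ in the exponent. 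The resulting restriction $\F_1:T_1\to 2^X$ remains $(K_0,\gamma)$-lacunary and satisfies the cut-set inequality with exponent $(1-1/k)\theta_1$.

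Third, to replace the weak distortion bound $K_0$ by the sharp bound $D$, I would apply the two-weight nonlinear Dvoretzky theorem (Theorem~\ref{thm:two weight}) locally at each branching vertex of $T_1$. This selects a further subset of children whose induced subtree admits an ultrametric embedding with distortion $\frac{(1-3\tau)D}{1+\tau}$; the $\frac{1+\tau}{1-3\tau}$ slack between $\frac{(1-3\tau)D}{1+\tau}$ and $D$ is recouped from the residual $(K_0,\gamma)$-lacunary structure of $\F_1$. The two-weight version preserves the cut-set inequality while incurring a second $(1-1/k)$ loss, yielding the final exponent $(1-1/k)^2\theta_1$, and the resulting $\G$ is the desired fragmentation map. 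The main obstacle is the pigeonhole alignment in the second stage: independently optimizing H\"older at each branching vertex forces mutually incompatible pruning depths, destroying both the uniform lacunary structure~\eqref{eq:def lacunary} and the hypotheses of Lemma~\ref{lem:cut-set-to-cover}. The $k$-scale discretization within each super-scale block of width $4k^2$ is the device that resolves this tension, and the constraint $\tau<(D-2)/(3D+2)$ is exactly what ensures $\frac{(1-3\tau)D}{1+\tau}>2$, so that the two-weight Dvoretzky theorem can be applied at its effective distortion.
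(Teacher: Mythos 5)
Your high-level architecture is correct --- initialize a fragmentation map, prune once to secure a cut-set inequality, prune again with a finite Dvoretzky theorem to sharpen the distortion --- and you correctly identify the pigeonhole alignment across scales and the role of the constraint $\tau<(D-2)/(3D+2)$. But the exponent accounting is misattributed in a way that would not survive being written out, and it obscures the actual mechanism.

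Concretely: you assign $\theta_1=\theta\left(\frac{(1-3\tau)D}{1+\tau}\right)$ to the H\"older pruning stage and one factor of $(1-\tfrac1k)$ to the Dvoretzky stage, but the paper has it the other way around. The H\"older step (Lemma~\ref{lem:holder}, used inside Lemma~\ref{lem:sparse-s-2}) produces \emph{both} $(1-\tfrac1k)$ factors: one from the multi-level H\"older inequality $\prod_{i\in H}f_i(u)\ge w(u)^{k-1}$ that exhibits $h-k+1$ ``good'' sparsification depths, and a second from the averaging/pigeonhole argument~\eqref{eq:averaged lower} used to align those depths across sibling subtrees. The definition~\eqref{eq:def theta(D)} of $\theta(D)$ is entirely decoupled from this tree-level H\"older argument; it is the sharp exponent of the probabilistic finite Dvoretzky theorem (Theorem~\ref{lem:theta(D)}, proved via Theorem~\ref{thm:two weight}), and the factor $\theta_1$ enters only when Lemma~\ref{lem:2.5} is applied in the final pruning stage. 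Your intermediate claim that Theorem~\ref{thm:two weight} ``incurs a second $(1-\tfrac1k)$ loss'' cannot be made to work: that theorem's exponent is $\theta(D)$, with no free parameter tied to $k$, and choosing $D$ so that $\theta(D)=1-\tfrac1k$ would give distortion on the order of $k$, not $D$. Also, the inequality you write as ``H\"older'' points the wrong way: the trivial bound $\sum_v\mu(\F_v)^{\theta_1}\ge\mu(\F_u)^{\theta_1}$ is automatic by subadditivity, so there is no loss factor to absorb, and the purpose of the H\"older argument is not to salvage a mass inequality but to find compatible sparsification levels.

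You also gloss over the key structural ingredient enabling the distortion improvement. Lemma~\ref{lem:2.5} applies to a \emph{$\beta$-separated} fragmentation map (Definition~\ref{def:separated}), and the paper constructs the alternating subsets $R$ and $S$ precisely so that restricting to $S$ yields a $\frac{1-3\tau}{2\tau}$-separated map; the factor $1+\tfrac2\beta=\frac{1+\tau}{1-3\tau}$ in the final distortion is exactly the overhead of ``metric composition'' over a separated family of clusters. The lacunary property, which you invoke instead, controls nested diameters but does not by itself make the induced metric $\bd_u$ of~\eqref{eq:def d_u} comparable to the true distances between cluster boundaries; the separation property is what makes the Dvoretzky step safe. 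A smaller imprecision: the initial partition map of Lemma~\ref{lem:initiate} lives at scales $\tau^j$ for every depth $j$; the lacunary parameter $\tau^{-4k^2}$ appears only after restricting to $S$, where consecutive $S$-levels can be up to $2h=4k^2$ apart in the original tree.
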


Lemma~\ref{lem:3} will be proved in Section~\ref{sec:proof-lem:3}. Assuming its validity for the moment, we now proceed to use it in combination with Lemma~\ref{lem:cut-set-to-cover} to prove Theorem~\ref{thm:measure}
 and Theorem~\ref{thm:measure2+}.

\begin{proof}[Proof of Theorem~\ref{thm:measure}]
By Lemma~\ref{lem:compactness} we may assume that $(X,d,\mu)$ is a finite metric measure space. Fix an integer $ \frac{10}{\e}\le k\le \frac{11}{\e}$ and set $\tau=\frac{1}{20}$. Then $(1-\e)\left(1-\frac1{k}\right)^{-2}\in (0,1)$, so we can define
\begin{equation}\label{eq:def D punchline}
D=\frac{1+\tau}{1-3\tau}\cdot \theta^{-1}\left(\frac{1-\e}{\left(1-\frac{1}{k}\right)^2}\right)=\frac{11}{7}\theta^{-1}\left(\frac{1-\e}{\left(1-\frac{1}{k}\right)^2}\right).
\end{equation}
Equivalently,
$$
\left(1-\frac1{k}\right)^2\theta\left(\frac{1-3\tau}{1+\tau}D\right)=1-\e.
$$

Due to~\eqref{eq:def theta(D)}, for every $s\in (0,1)$ we have $\theta^{-1}(s)=2(1-s)^{-1}s^{-s/(1-s)}>2$. Hence it follows from~\eqref{eq:def D punchline} that $D>2(1+\tau)/(1-3\tau)$, or equivalently $\tau<(D-2)/(3D+2)$. By~\eqref{eq:lower theta 2e} we have $\theta^{-1}(s)\le 2e/(1-s)$. Therefore,
\begin{equation}\label{eq:9}
D\le \frac{11}{7}\cdot\frac{2e}{1-\frac{1-\e}{(1-\e/10)^2}}=\frac{42e(10-\e)^2}{17\e(8+\e)}\le \frac{9}{\e},
\end{equation}
where the last inequality in~\eqref{eq:9} is elementary. The required conclusion now follows from Lemma~\ref{lem:3} and Lemma~\ref{lem:cut-set-to-cover}. Note that we get the bound $c_\e=\tau^{-O(k^2)}=e^{O(1/\e^2)}$.
\end{proof}

\begin{proof}[Proof of Theorem~\ref{thm:measure2+}] Again, using Lemma~\ref{lem:compactness} we may assume that $(X,d,\mu)$ is a finite metric measure space.
Apply Lemma~\ref{lem:3} with $D=2+\delta$, $k=2$ and $\tau=\delta/9$. Denote the exponent in~\eqref{eq:lemma goal} by $s=\frac12\theta\left((9-3\delta)(2+\delta)/(9+\delta)\right)$. By~\eqref{eq:lower theta near 2} there is a universal constant $c\in (0,\infty)$ such that $s\ge t$, where $t=c\delta/\log(1/\delta)$. Let $\G:T\to 2^X$ be the fragmentation obtained obtained from Lemma~\ref{lem:3}, and let $G$ be a cut-set in $T$. Then by~\eqref{eq:lemma goal} we have,
$$
\left(\sum_{v\in G}\mu\left(\p_T(v)\right)^t\right)^{1/t}\ge \left(\sum_{v\in G}\mu\left(\p_T(v)\right)^s\right)^{1/s}\ge \mu(X).
$$
We can therefore apply Lemma~\ref{lem:cut-set-to-cover} with $\theta=t$, $K=2\tau^{-16}/(1-3\tau)$ and $\gamma=\tau^{-16}$, obtaining Theorem~\ref{thm:measure2+}. Note that this shows that $c'_\delta$ can be taken to be a constant multiple of $\delta^{-16}$.
\end{proof}

\section{Asymptotically optimal fragmentation maps:  proof of Lemma~\ref{lem:3}}
\label{sec:proof-lem:3}

It remains to prove Lemma~\ref{lem:3} in order to establish
Theorem~\ref{thm:measure} and Theorem~\ref{thm:measure2+}. The proof
of Lemma~\ref{lem:3} decomposes naturally into two parts. The first
part yields a fragmentation map $\F:T\to 2^X$ that satisfies the
desired cut-set inequality~\eqref{eq:lemma goal}, but the distortion
of $(\partial \F_{r(T)},d)$ in an ultrametric space is not good
enough. The second part improves the embeddability of $(\partial
\F_{r(T)},d)$ into an ultrametric space by performing further
pruning.

We begin with the second part since it is shorter and simpler to
describe. In order to be able to improve the embeddability of
$(\partial \F_{r(T)},d)$ into an ultrametric space, we will use the
following property.

\begin{definition}[Separated fragmentation map]\label{def:separated}
Given $\beta>0$ and a fragmentation map $\F:T\to 2^X$, a vertex $u\in T$ is called $\beta$-separated if for every $x\in \left(\partial \F_{r(T)}\right)\setminus \left(\partial \F_u \right)$ we have
\begin{equation}\label{eq:def separated}
d\left(x,\F_u\right)\ge \beta\cdot \diam\left(\F_u\right).
\end{equation}
The map $\F:T\to 2^X$ is called $\beta$-separated if all the vertices $u\in T$ are $\beta$-separated.
\end{definition}

The following very simple lemma exploits the fact that the class of
ultrametrics is closed under truncation. This fact will serve as a
useful normalization in the ensuing arguments.

\begin{lemma}\label{lem:truncation}
Let $(X,d)$ be a bounded metric space that embeds with distortion $D$ into an ultrametric space. Then there exists an ultrametric $\rho$ on $X$ satisfying
\begin{itemize}
\item  $d(x,y)\le \rho(x,y)\le Dd(x,y)$ for all $x,y\in X$,
\item $\diam_d(X)=\diam_\rho(X)$.
\end{itemize}
\end{lemma}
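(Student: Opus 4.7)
The plan is to take the given ultrametric and truncate it at the level of $\diam_d(X)$. More precisely, let $\rho_0$ be an ultrametric on $X$ witnessing the distortion-$D$ embedding; by rescaling we may assume $d(x,y)\le \rho_0(x,y)\le Dd(x,y)$ for all $x,y\in X$. Set
\begin{equation*}
\rho(x,y)\eqdef \min\bigl\{\rho_0(x,y),\,\diam_d(X)\bigr\}.
\end{equation*}
I will verify the three required properties of $\rho$: that it is an ultrametric, that it satisfies the two-sided bound by $d$ and $Dd$, and that its diameter coincides with $\diam_d(X)$.

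First I would check that $\rho$ is an ultrametric. The key point is the elementary identity $\min\{\max\{a,b\},c\}=\max\{\min\{a,c\},\min\{b,c\}\}$ for nonnegative reals, which gives, for any $x,y,z\in X$,
\begin{equation*}
\rho(x,y)\le \min\bigl\{\max\{\rho_0(x,z),\rho_0(z,y)\},\diam_d(X)\bigr\}=\max\bigl\{\rho(x,z),\rho(z,y)\bigr\},
\end{equation*}
using the ultrametric inequality for $\rho_0$. Symmetry is obvious, and positive definiteness follows from the lower bound I check next (which forces $\rho(x,y)\ge d(x,y)>0$ when $x\ne y$).

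Next, since $d(x,y)\le \diam_d(X)$ for all $x,y\in X$, the lower bound reads
\begin{equation*}
\rho(x,y)=\min\bigl\{\rho_0(x,y),\diam_d(X)\bigr\}\ge \min\bigl\{d(x,y),\diam_d(X)\bigr\}=d(x,y),
\end{equation*}
while the upper bound $\rho(x,y)\le \rho_0(x,y)\le Dd(x,y)$ is immediate from the definition. Finally, for the diameter statement, $\rho\le \diam_d(X)$ pointwise yields $\diam_\rho(X)\le \diam_d(X)$, and $\rho\ge d$ yields the reverse inequality.

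There is no real obstacle here; the only thing to notice is that the class of ultrametrics is closed under capping by a constant, which is precisely the min/max distributivity used above. This truncation construction is standard and will be useful downstream as a normalization that makes the ultrametric diameter match the intrinsic metric diameter, so that later lacunarity arguments can be stated cleanly.
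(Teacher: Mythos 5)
Your proof is correct and follows essentially the same route as the paper: rescale the given ultrametric so that the lower Lipschitz constant is $1$, then cap it at $\diam_d(X)$, noting that truncation preserves the ultrametric inequality.
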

\begin{proof}
We are assuming that there exist $A,B>0$ and an ultrametric $\rho_0$ on $X$ that satisfies $Ad(x,y)\le \rho_0(x,y)\le Bd(x,y)$ for all $x,y\in X$, where $B/A\le D$. We can therefore define $\rho=\min\{\rho_0/A,\diam_d(X)\}$.
\end{proof}

The last ingredient that we need before we can state and prove the
lemma that describes how to improve the embeddability of $(\partial
\F_{r(T)},d)$ into an ultrametric space  is a weighted version of
nonlinear Dvoretzky theorem for finite metric spaces. As discussed
in the introduction, it was proved in~\cite{NT-fragmentations} that
for every $D>1$, every $n$-point metric space $(X,d)$ contains a
subset of size $n^{\theta(D)}$ that embed in ultrametric with
distortion at most $D$, where $\theta(D)$ is defined
in~\eqref{eq:def theta(D)}. We will need  the following
generalization of this result.

\begin{theorem} \label{lem:theta(D)}
For every $D>2$, every finite metric space $(X,d)$ and every $w:X\to (0,\infty)$, there exists $S\subseteq X$ that embeds with distortion $D$ into an ultrametric space and satisfies,
\begin{equation}\label{eq:theta(D) condition}
\sum_{x\in S} w(x)^{\theta(D)}\ge \left(\sum_{x\in X} w(x)\right)^{\theta(D)}.
\end{equation}
\end{theorem}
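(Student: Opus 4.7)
The plan is to extend the proof of the unweighted theorem from \cite{NT-fragmentations} to the weighted setting, replacing uniform counting by weighted counting, and to argue by induction on $|X|$. Write $\theta=\theta(D)$ and $W(A)=\sum_{x\in A}w(x)$ for $A\subseteq X$. The base case $|X|=1$ is immediate with $S=X$, in which case both sides of \eqref{eq:theta(D) condition} equal $w(x)^\theta$.

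For the inductive step with $|X|\ge 2$, the plan is to locate a center $x_0\in X$ and a radius $r>0$ such that with $A=B(x_0,r)$ and $B=X\setminus B^\circ(x_0,\tau r)$ (for a specific $\tau=\tau(D)>1$) the weighted cut-set inequality
\begin{equation*}
W(A)^\theta+W(B)^\theta\ge W(X)^\theta
\end{equation*}
holds. Given such a pair, the induction hypothesis applied to the proper subspaces $(A,d|_A,w|_A)$ and $(B,d|_B,w|_B)$ yields subsets $S_A\subseteq A$, $S_B\subseteq B$ with ultrametric embeddings $\rho_A,\rho_B$ of distortion $\le D$ satisfying the corresponding weighted inequalities. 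After normalizing via Lemma~\ref{lem:truncation} so that $\diam_{\rho_A}(S_A)\le\diam_d(A)\le 2r$ and similarly for $B$, one combines them into a single ultrametric on $S=S_A\cup S_B$ by assigning a common cross-distance $M$ in the interval bounded below by $\max(\diam_d(A),\diam_d(B))$ and above by $D\cdot d(A,B)\ge D(\tau-1)r$; the choice of $\tau$ together with the requirement that $r$ not be too small relative to $\diam(X)/D$ make this interval nonempty, yielding distortion $\le D$ for the combined embedding on $S$.

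The central difficulty is proving the existence of a pair $(x_0,r)$ realizing the cut-set inequality with the sharp exponent $\theta(D)$. If the annulus $B^\circ(x_0,\tau r)\setminus B(x_0,r)$ contains no point of $X$, then $W(A)+W(B)=W(X)$ and the inequality follows immediately from the subadditivity $a^\theta+b^\theta\ge(a+b)^\theta$ valid for $a,b\ge 0$ and $\theta\in(0,1]$. In general however the annulus carries positive weight, and this loss must be compensated by a delicate scale-selection argument. Following \cite{NT-fragmentations}, the plan is to examine for each $x\in X$ the nondecreasing step function $t\mapsto W(B(x,t))$, run a pigeonhole argument over a geometric sequence of scales $r_i=\tau^i r_0$ and over centers $x$ weighted by $w(x)^\theta$, and extract a pair realizing the inequality. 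The calibration of the scale ratio $\tau$ that optimizes this pigeonhole is expected to reproduce, as in the unweighted case, the defining equation $2/D=(1-\theta)\theta^{\theta/(1-\theta)}$ of Definition~\ref{def:theta(D)}.

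The main obstacle is executing this averaging in the weighted setting without losing sharpness: the uniform counting measure on centers used in the original unweighted argument must be replaced by one proportional to $w(x)^\theta$, and the balancing of annulus weight loss against the subadditivity gain $\sum_{x\in S}w(x)^\theta\ge W(X)^\theta$ must be reproduced in the weighted framework so that the same extremal tradeoff arises. The restriction $D>2$ (rather than the $D>1$ of the original NT statement) is consistent with the geometric combining step requiring enough separation to accommodate the set $B$, whose diameter can be as large as $\diam(X)$.
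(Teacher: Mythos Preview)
Your plan is \emph{not} the argument the paper uses, and the part you flag as ``the main obstacle'' is a genuine gap, not a routine verification.

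\medskip
\textbf{What the paper actually does.} The paper does not argue by induction on $|X|$ at all. It proves the stronger two-weight statement (Theorem~\ref{thm:two weight}): for any $w_1,w_2:X\to[0,\infty)$ there is $S\subseteq X$ embedding into an ultrametric with distortion $D=2\big/\bigl(\e(1-\e)^{(1-\e)/\e}\bigr)$ and
\[
\Bigl(\sum_{x\in S}w_1(x)\Bigr)\Bigl(\sum_{x\in X}w_2(x)\Bigr)^{\e}\ \ge\ \sum_{x\in X}w_1(x)w_2(x)^{\e}.
\]
Theorem~\ref{lem:theta(D)} is then the specialization $w_1=w^{1-\e}$, $w_2=w$. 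The proof of Theorem~\ref{thm:two weight} runs the full Naor--Tao \emph{iterative} fragmentation (Lemma~2.1 of~\cite{NT-fragmentations}) down a random sequence of radii $r_0\ge r_1\ge\cdots$, interprets $w_2$ as a measure $\mu(\{x\})=w_2(x)$, takes expectation, and applies Jensen's inequality. The sharp exponent emerges precisely from the admissibility condition $\Pr[r_n<r\le r_n+2r_{n-1}/D]\le\e$ on the random radii, which is tailored so that the telescoping product of ball-measure ratios picks up exactly the factor $(\mu(\{x\})/\mu(X))^\e$.

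\medskip
\textbf{Where your plan breaks.} Your scheme is a one-scale divide-and-conquer: find a single center $x_0$ and a single annulus $B^\circ(x_0,\tau r)\setminus B(x_0,r)$ across which the weighted cut-set inequality $W(A)^\theta+W(B)^\theta\ge W(X)^\theta$ holds, then recurse on $A$ and $B$ separately and glue. Two issues:
\begin{enumerate}
\item You never establish the existence of such $(x_0,r)$ with the sharp $\theta=\theta(D)$. The pigeonhole you sketch is essentially the deterministic region-growing argument of~\cite{BLMN05}, and that argument is known to lose a $\log(1/\e)$ factor in the distortion (equivalently, it does not reach $\theta(D)$). The sharp constant in~\cite{NT-fragmentations} does not come from a single geometric scale chosen by pigeonhole; it comes from averaging over \emph{all} scales simultaneously via the random-radii distribution, which is exactly what your inductive framework discards.
\item Your gluing step requires the cross-distance $M$ to satisfy $M\ge\diam_d(S_B)$ (for the ultrametric inequality after truncation) and $M\le D\cdot d(S_A,S_B)$; since $\diam_d(S_B)$ may equal $\diam(X)$ while $d(S_A,S_B)\ge(\tau-1)r$, you are forced into $r\ge\diam(X)/\bigl(D(\tau-1)\bigr)$. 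This leaves only $O(1)$ geometric scales available for the pigeonhole, far too few to control the annulus loss with exponent $\theta(D)$.
\end{enumerate}
In short, the recursive split-and-glue template cannot reproduce the sharp exponent; the paper's route through random radii and Jensen is not an alternative packaging of the same idea but the mechanism by which sharpness is obtained.
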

With some minor changes, the proof in~\cite{NT-fragmentations} also
applies to the more general weighted setting of
Theorem~\ref{lem:theta(D)}. We prove Theorem~\ref{lem:theta(D)}  in
Section~\ref{sec:NT} by sketching the necessary changes to the
argument in~\cite{NT-fragmentations}.

Assuming the validity of Theorem~\ref{lem:theta(D)}, we are now ready
to improve the ultrametric distortion of a fragmentation map by
performing additional pruning. We use the ``metric composition
technique" of~\cite{BLMN05}, which takes a vertex and its children
in the tree associated to the fragmentation map, deletes some of
these children, and arranges the remaining children into a new tree
structure. The deletion is done by solving a nonlinear Dvoretzky
problem for weighted finite metric spaces, i.e., by applying
Theorem~\ref{lem:theta(D)}.

\begin{lemma} \label{lem:2.5} Fix $D\in (2,\infty)$ and $\beta\in (0,\infty)$.
Let $(X,d)$ be a finite metric space. Suppose that $\F:T\to 2^X$ is a fragmentation map which is $\beta$-separated.
 Suppose also that there is a weight function $w:T\to (0,\infty)$ which is subadditive, i.e., that for every non-leaf vertex $u\in T\setminus \L(T)$,
\begin{equation}\label{eq:sub in compose lem}
 \sum_{v\in\p_T^{-1}(u)} w(v) \ge w(u),
 \end{equation}
Then there exists a subtree $T'$ of $T$ with the same root such that the restricted fragmentation map $\G=\F|_{T'}$ satisfies the following properties.
\begin{itemize}
\item $\left(\partial\G_{r(T')},d\right)$ embeds into an ultrametric space with distortion $D\left(1+\frac{2}{\beta}\right)$.
\item Every non-leaf vertex $u\in T'\setminus \L(T')$ satisfies
\begin{equation}\label{eq:weight-2.5}
\sum_{v\in\p_{T'}^{-1}(u)} w(v)^{\theta(D)} \ge w(u)^{\theta(D)}.
\end{equation}
\end{itemize}
\end{lemma}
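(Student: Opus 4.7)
The plan is to build $T'$ by a top-down pruning of $T$ that at each non-leaf vertex invokes Theorem~\ref{lem:theta(D)} to discard children without hurting the weighted sum, and then to exhibit the required ultrametric on $\partial\G_{r(T')}$ by composing the resulting per-level ultrametrics using the metric composition technique of~\cite{BLMN05}, with the $\beta$-separation hypothesis providing the multiplicative overhead $1+2/\beta$ on top of the base distortion $D$.

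For the pruning, initialize $T'$ at $r(T)$ and proceed inductively. At each non-leaf $u\in T'\cap(T\setminus\L(T))$, for each child $v\in\p_T^{-1}(u)$ fix a representative $y_v\in\partial\F_v$ and form the finite weighted metric space $(M_u,d,w_u)$ with $M_u=\{y_v:v\in\p_T^{-1}(u)\}$ and $w_u(y_v)=w(v)$. Apply Theorem~\ref{lem:theta(D)} with distortion parameter $D$ to obtain a subset $M'_u\subseteq M_u$ admitting an ultrametric $\rho_u$ with $d\le \rho_u\le D\cdot d$ on $M'_u$, satisfying
\[
\sum_{y\in M'_u} w_u(y)^{\theta(D)}\;\ge\;\Bigl(\sum_{y\in M_u} w_u(y)\Bigr)^{\theta(D)}.
\]
Let $S_u\subseteq\p_T^{-1}(u)$ be the corresponding children and retain them (along with their full $T$-subtrees, to be further pruned recursively) as the children of $u$ in $T'$; leaves of $T$ remain leaves of $T'$. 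The subadditivity hypothesis~\eqref{eq:sub in compose lem} combined with the displayed inequality yields~\eqref{eq:weight-2.5} at $u$.

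For the ultrametric, use Lemma~\ref{lem:truncation} to normalize each $\rho_u$ so that $\diam_{\rho_u}(M'_u)=\diam_d(M'_u)$. For $x\ne y$ in $\partial\G_{r(T')}$, let $a,b\in\L(T')$ satisfy $\G_a=\{x\},\G_b=\{y\}$, set $u=\lca_{T'}(a,b)$, and let $v_i,v_j\in S_u$ be the two children of $u$ whose boundaries contain $x$ and $y$. Define $\rho(x,y):=\rho_u(y_{v_i},y_{v_j})$, suitably inflated in the composition described below. The $\beta$-separation applied at $v_i$ forces $d(x',x'')\ge\beta\diam_d(\F_{v_i})$ for every $x'\in\F_{v_i},\; x''\in\partial\F_{r(T)}\setminus\partial\F_{v_i}$, so in particular $\diam_d(\F_v)\le\diam_d(\F_u)/\beta$ for any strict descendant $v$ of $u$; together with the normalization this makes the level heights decreasing along root-to-leaf paths in the composite tree, giving the strong triangle inequality. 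For the upper bound, $y_{v_j}\in\partial\F_{r(T)}\setminus\partial\F_{v_i}$ implies $d(x,y_{v_j})\ge\beta\diam_d(\F_{v_i})\ge\beta\,d(x,y_{v_i})$, hence $d(x,y_{v_i})\le d(x,y)/\beta$ and similarly $d(y,y_{v_j})\le d(x,y)/\beta$; the triangle inequality gives $d(y_{v_i},y_{v_j})\le(1+2/\beta)d(x,y)$, whence $\rho(x,y)\le D(1+2/\beta)d(x,y)$.

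The main obstacle is the matching lower bound $\rho(x,y)\ge d(x,y)$ \emph{together with} the ultrametric property: the naive choice $\rho=\rho_u(y_{v_i},y_{v_j})$ only yields $\rho\ge (1-2/\beta)d$, which is worthless for small $\beta$. This is resolved by the standard BLMN composition recipe: realize $\rho$ as the ultrametric associated with a composite labeled rooted tree obtained by splicing each $\rho_v$-tree (for $v$ a descendant of $u$) into the corresponding representative leaf of the $\rho_u$-tree, and inflate each level label slightly—still bounded by $D(1+2/\beta)d$—to simultaneously force monotonicity of labels along root-to-leaf paths and pointwise domination over $d$. Once the estimates $\diam_d(\F_v)\le\diam_d(\F_u)/\beta$ and $d(y_{v_i},y_{v_j})\le(1+2/\beta)d(x,y)$ are in hand, this bookkeeping is purely combinatorial, and it is precisely where the $\beta$-separation hypothesis is indispensable.
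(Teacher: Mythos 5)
Your pruning step, applying Theorem~\ref{lem:theta(D)} at each non-leaf vertex to obtain~\eqref{eq:weight-2.5}, is the same as the paper's. The gap is in the choice of intermediate metric on the children to which Theorem~\ref{lem:theta(D)} is applied. You use the restriction of $d$ to representative points $\{y_v\}$, one per child, and you correctly observe that this yields only $\rho\ge(1-2/\beta)d$, which is vacuous for $\beta\le 2$ (and the lemma must hold for all $\beta>0$). The ``BLMN composition with slight inflation'' you invoke does not close this: a multiplicative inflation by $(1-2/\beta)^{-1}$ is undefined for $\beta\le 2$, and relabeling the composite tree by the $d$-diameter of the boundaries below each vertex --- the only label that pointwise dominates $d$ and yields a monotone labeling --- loses \emph{another} factor of $(1+2/\beta)$ beyond what you claim, because the representatives $y_x$ for $x\ne v_i,v_j$ sitting below the $\rho_u$-tree LCA also contribute their cluster diameters (each bounded via separation by $D(1+2/\beta)d(p,q)/\beta$). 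Carried out carefully, this route gives distortion $D(1+2/\beta)^2$, not the claimed $D(1+2/\beta)$.

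The paper sidesteps the issue by a different intermediate metric: for distinct children $x,y$ of $u$ it sets $\bd_u(x,y)=\diam_d\bigl((\partial\F_x)\cup(\partial\F_y)\bigr)$ and applies Theorem~\ref{lem:theta(D)} together with the truncation Lemma~\ref{lem:truncation} to $(C_u,\bd_u,w)$. Since $\bd_u(x,y)\ge d(p,q)$ for all $p\in\partial\F_x$, $q\in\partial\F_y$, the lower bound $\rho\ge d$ is immediate and no inflation or relabeling is needed. The upper bound $\bd_u(x,y)\le\diam_d(\partial\F_x)+d(\partial\F_x,\partial\F_y)+\diam_d(\partial\F_y)\le(1+2/\beta)d(p,q)$ comes directly from the $\beta$-separation applied at $x$ and at $y$. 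The ultratriangle inequality across levels uses the truncation: $\rho_u(x,y)\le\diam_{\bd_u}(S_u)\le\diam_d\bigl(\bigcup_{w\in S_u}\partial\F_w\bigr)\le\bd_v(s,t)\le\rho_v(s,t)$ whenever $u$ is a proper descendant of $v$. This diameter-of-union metric is the missing idea: without it, neither the lower bound $\rho\ge d$ nor the ultrametric check can be established with the stated constant.
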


\begin{proof}
Before delving into the details of proof, the reader may want to
consult Figure~\ref{fig:separation}, in which the strategy of the
proof is illustrated.

\begin{figure}[ht]
\centering \fbox{
\begin{minipage}{6in}
\centering \includegraphics[scale=0.3,page=3]{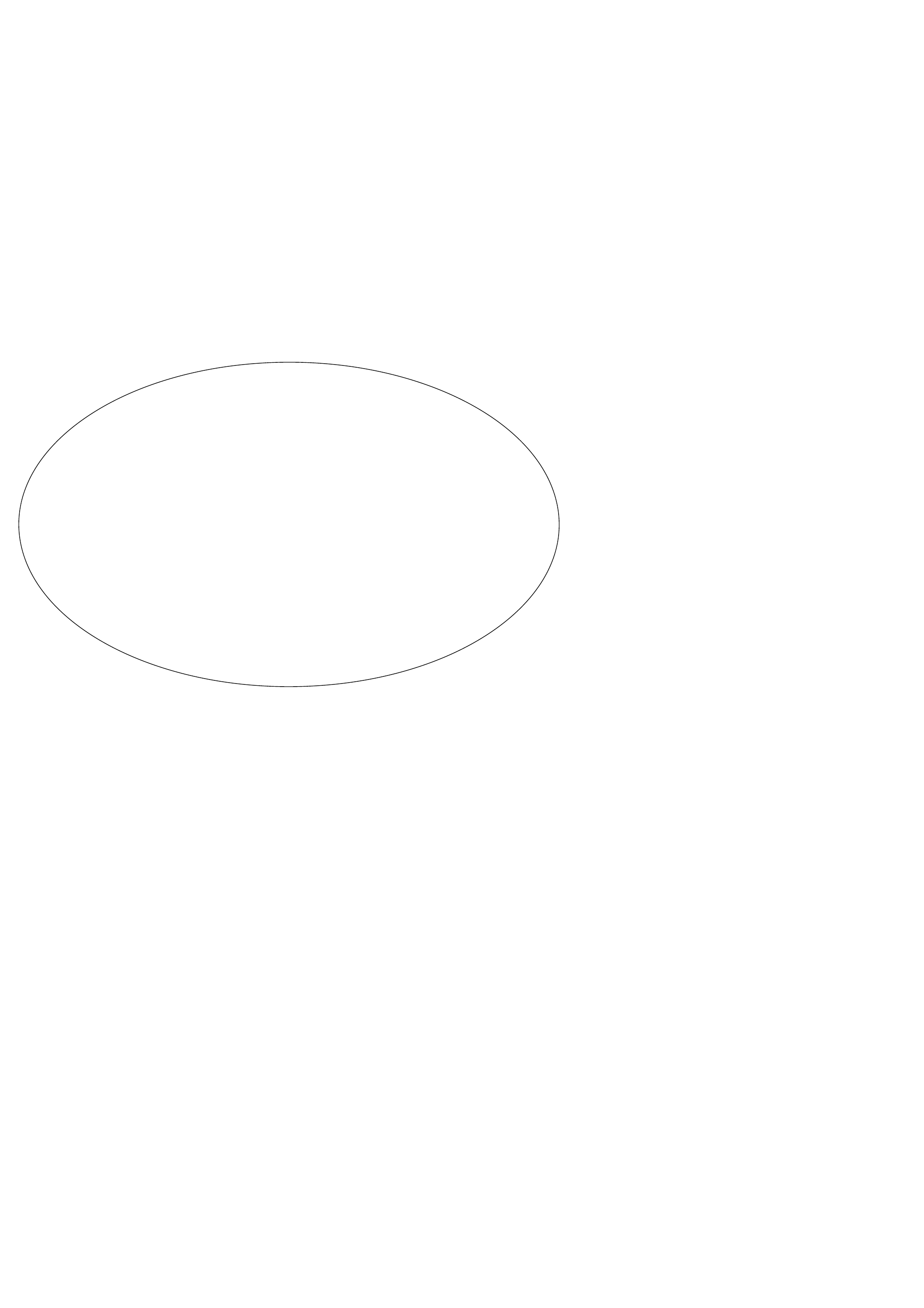}
\includegraphics[scale=0.3,page=4]{sparsification}
\includegraphics[scale=0.3,page=5]{sparsification}
\includegraphics[scale=0.3,page=6]{sparsification}
\caption{A schematic illustration of the proof of
Lemma~\ref{lem:2.5}. The first figure from the left depicts three
levels of the fragmentation map, the middle level being separated.
In the second figure from the left we consider a certain induced
metric (see~\eqref{eq:def d_u}) on the clusters in the middle level.
Due to the separation property, this metric approximates the actual
distances between points in different clusters. In the third  figure
from the left we have applied the weighted finite Dvoretzky theorem,
i.e., Theorem~\ref{lem:theta(D)}, to the middle level clusters, thus
obtaining an appropriately large subset of clusters on which the
induced metric is approximately an ultrametric. The rightmost figure
describes the tree representation of this new ultrametric.}
\label{fig:separation}
\end{minipage}
}
\end{figure}

For every vertex $u\in T\setminus\L(T)$ let $C_u=\p_T^{-1}(u)$ be the set of children
of $u$ in $T$. Let $\bd_u$ be a metric defined on $C_u$ as follows
\begin{equation}\label{eq:def d_u}
 \bd_u(x,y)=\left\{ \begin{array}{ll} \diam_d\left(\left(\partial\F_x\right)\cup\left(\partial\F_y\right)\right)& \mathrm{if}\ x\neq y,\\
 0&\mathrm{if}\ x=y.\end{array}\right.
\end{equation}
The validity of the triangle inequality for $\bd_u$ is immediate to verify.
By the definition of $\theta(D)$ there exists a subset $S_u\subseteq C_u$ such that
\begin{equation} \label{eq:subtree-power}
 \sum_{x\in S_u} w(x)^{\theta(D)}\stackrel{\eqref{eq:theta(D) condition}}{ \ge} \left(\sum_{x\in C_u} w(x)\right)^{\theta(D)} \stackrel{\eqref{eq:sub in compose lem}}{\ge}
w(u)^{\theta(D)}.
\end{equation}
and $(S_u, \bd_u)$ embeds with distortion  $D$ into an  ultrametric space. By  Lemma~\ref{lem:truncation} there exists an ultrametric $\rho_u$ on $S_u$ such that every $x,y\in S_u$ satisfy
\begin{multline}\label{eq:rho_u condition}
\bd_u(x,y) \le \rho_u(x,y) \le \min\left\{\diam_{\bd_u}(S_u), D \bd_u(x,y)\right\}\\\stackrel{\eqref{eq:def d_u}}{\le}\min\left\{\diam_d\left(\bigcup_{x\in S_u}\partial\F_x\right),D \bd_u(x,y)\right\}.
\end{multline}

The subtree $T'\subseteq T$ is now defined inductively in a top-down fashion as follows: declare $r(T)\in T'$ and if $u\in T$ is a non-leaf vertex that was already declared to be in $T'$, add the vertices in $S_u$ to $T'$ as well. Inequality~\eqref{eq:weight-2.5} follows from~\eqref{eq:subtree-power}.
It remains to prove that $\left(\partial\G_{r(T')},d\right)$ embeds into an ultrametric space with distortion $D\left(1+2/\beta\right)$. To this end fix $p,q\in \partial\G_{r(T')}$ and choose the corresponding $a,b\in \L(T')$ such that $\G_a=\{p\}$ and $\G_b=\{q\}$. Let $u=\lca_{T'}(a,b)=\lca_T(a,b)$ and choose $x,y\in S_u$ that are weak ancestors of $a$ and $b$, respectively. Define $\rho(p,q)=\rho_u(x,y)$. Now,
\begin{equation*}\label{eq:rho lower}
d(p,q)=d(\G_a,\G_b)\le \diam_d\left(\left(\partial\F_x\right)\cup\left(\partial\F_y\right)\right)\stackrel{\eqref{eq:def d_u}}{=}\bd_u(x,y)\stackrel{\eqref{eq:rho_u condition}}{\le} \rho_u(x,y)=\rho(p,q).
\end{equation*}
The corresponding lower bound on
$d(p,q)$
is  proved as follows, using the assumption that the fragmentation map $\F$ is $\beta$-separated.
\begin{multline*}\label{eq:rho lower}
\frac{\rho(p,q)}{D}\stackrel{\eqref{eq:rho_u condition}}{\le} \bd_u(x,y)\stackrel{\eqref{eq:def d_u}}{\le} \diam_d\left(\partial\F_x\right)+\diam_d\left(\partial\F_y\right)+d\left(\partial\F_x,\partial\F_y\right)
\stackrel{\eqref{eq:def separated}}\le \left(1+\frac{2}{\beta}\right)d(p,q).
\end{multline*}

 We now argue that $\rho$ is an ultrametric on $\partial\G_{r(T')}$. This is where we will use the truncation in~\eqref{eq:rho_u condition}, i.e., that for all $u\in T\setminus \L(T)$ we have $\diam_{\rho_u}(S_u)\le\diam_d\left(\bigcup_{x\in S_u}\partial\F_x\right).$
Take $p_1,p_2,p_3\in \partial\G_{r(T')}$ and choose the corresponding $a_1,a_2,a_3\in \L(T')$ such that $\G_{a_i}=\{p_i\}$ for $i\in \{1,2,3\}$. By relabeling the points if necessary, we may assume that $u=\lca_T(a_1,a_2)$ is a weak descendant of $v=\lca_T(a_2,a_3)$. If $u=v$ take $x_1,x_2,x_3\in S_u$ that are weak ancestors of $a_1,a_2,a_3$, respectively. Since $\rho_u$ is an ultrametric, it follows that
$$
\rho(p_1,p_2)=\rho_u(x_1,x_2)\le \max\left\{\rho_u(x_1,x_3),\rho_u(x_3,x_2)\right\}= \max\left\{\rho_u(p_1,p_3),\rho_u(p_3,p_2)\right\}.
$$
If, on the other hand, $u$ is a proper descendant of $v$ then choose $x_1,x_2\in S_u$ that are weak ancestors of $a_1,a_2$ (respectively), and choose $s,t\in S_v$ that are weak ancestors of $u,a_3$ (respectively). Then,

\begin{multline*}
\rho(p_1,p_2)=\rho_u(x,y)\stackrel{\eqref{eq:rho_u condition}}{\le} \diam_d\left(\bigcup_{w\in S_u}\partial\F_w\right)\le \diam_d(\partial\F_s)\\\le
\diam_d\left((\partial\F_s)\cup(\partial\F_t)\right)\stackrel{\eqref{eq:def d_u}}{=}\bd_v(s,t)\stackrel{\eqref{eq:rho_u condition}}{\le} \rho_v(s,t)=\rho(p_1,p_3)
=\rho(p_2,p_3).
\end{multline*}
This establishes the ultratriangle inequality for $\rho$, completing the proof of Lemma~\ref{lem:2.5}.
\end{proof}

The next lemma establishes the existence of an intermediate
fragmentation map with useful geometric properties; its proof is
deferred to Section~\ref{sec:proof:after-first-sparsification}.


\begin{lemma} \label{lem:after-first-sparsification} Fix $\tau\in (0,1/3)$ and integers $m,h,k\ge 2$ with $h\ge 2k^2$. Let $(X,d,\mu)$ be  a finite metric measure space of diameter $1$.
 Then there exists a fragmentation map $\F:T\to 2^X$ with the following properties.
\begin{enumerate}[\{1\}]
\item \label{it:0} All the leaves of the tree $T$ are at depth $mh$.
\item \label{it:1} For every $u\in T$ we have
\begin{equation}\label{eq:diam decay cor}
\diam(\F_u)\le \tau^{\depth_T(u)}.
\end{equation}
\item \label{it:2} Denote by $R\subseteq T$ the set of vertices at depths which are integer multiples of $h$. Then for every non-leaf $u\in R$,
\begin{equation} \label{eq:power.(h-1)/h}
 \sum_{v\in D_T(u,R)} \mu\left(\F_v\right)^{\left(1-\frac1{k}\right)^2} \ge \mu\left(\F_u\right)^{\left(1-\frac1{k}\right)^2} .
\end{equation}
Recall that $D_{T}(\cdot,\cdot)$ is given in
Definition~\ref{def:D*}.
\item \label{it:3} There is a subset $S\subseteq T$ containing the root  of $T$ such that
$R$ and $S$ are ``alternating" in the following sense. For every $u,v\in R$ such that $\depth_T(v)=\depth_T(u)+h$ and $v$ is a descendant of $u$, there is one and only one $w\in S$ such that $w$ lies on the path joining $u$ and $v$ and $\depth_T(u)<\depth_T(w)\le \depth_T(v)$.
\item \label{it:4} The vertices of $S$ are $\frac{1-3\tau}{2\tau}$-separated (recall Definition~\ref{def:separated}).
\item \label{it:6} $\F$ is  $\left(\frac{2}{1-3\tau} \tau^{-2h},\tau^{-1}\right)$-lacunary (recall Definition~\ref{def:lacunary}).
\end{enumerate}
\end{lemma}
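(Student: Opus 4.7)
I would prove Lemma~\ref{lem:after-first-sparsification} in three stages: first construct an initial fragmentation map with the right diameter decay, then prune to establish the cut-set power-sum inequality at $R$-levels, and finally identify the separation-level vertices $S$ by pigeonhole.

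\textbf{Stage 1: initial bottom-up fragmentation.} I would construct $\F_0 : T_0 \to 2^X$ of depth exactly $mh$ satisfying \{1\} and \{2\} by iterative ball-removal. At each cluster $C$ of diameter at most $\tau^\ell$, greedily carve off sub-clusters of the form $B(x,r)\cap C$, where $x \in C$ is arbitrary and $r$ is chosen from an annular range such as $[\tau^{\ell+1}/2,(1-\tau)\tau^{\ell+1}]$ to minimize the $\mu$-measure of a ``discarded shell'' $B(x,(1-\tau)\tau^{\ell+1})\setminus B(x,r)$. Averaging over the admissible radii shows that the shell loses only an $O(\tau)$-fraction of the measure, while the resulting child is separated from the rest of $C$ by a gap of order $\tau^{\ell+1}$. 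Unary padding vertices are inserted where needed to bring every branch to depth exactly $mh$.

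\textbf{Stage 2: cut-set inequality via iterated H\"older.} For each non-leaf $u \in R$ at depth $jh$, I would establish~\eqref{eq:power.(h-1)/h} by an iterated H\"older/pigeonhole argument applied to the $h$ intermediate levels of $T_0$ between $u$ and its $R$-descendants at depth $(j+1)h$. One factor of $(1-1/k)$ arises from a pigeonhole over the $h \geq 2k^2$ intermediate levels: at some level $\ell\in(jh,(j+1)h]$ the sub-clusters must satisfy a $(1-1/k)$-power-sum bound above $\mu(\F_u)^{1-1/k}$, since otherwise summing across all $h$ levels would contradict the total mass $\mu(\F_u)$. A second factor of $(1-1/k)$ comes from selecting, within that chosen level, the particular subset $D_T(u,R)$ by a further H\"older estimate applied to the continuation of the pruning down to depth $(j+1)h$; composing these two yields the exponent $(1-1/k)^2$ in~\eqref{eq:power.(h-1)/h}.

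\textbf{Stage 3: identifying $S$ and verifying lacunarity.} The same pigeonhole that produces a good intermediate level in Stage 2 can be refined to ensure that the retained clusters at that level are $\frac{1-3\tau}{2\tau}$-separated from $\partial \F_{r(T_0)}$ minus themselves, using the annular gaps from Stage 1: a cluster extracted at scale $\tau^{\ell+1}$ sits at distance $\gtrsim \tau^{\ell+1}$ from the complement in its parent, while its diameter is at most $\tau^{\ell}$, giving exactly the ratio $\frac{1-3\tau}{2\tau}$. Let $S$ consist of the root together with these distinguished separation-level vertices; then \{4\} is immediate by construction (exactly one $S$-vertex lies on each root-to-leaf path in each block of $h$ levels), and \{5\} holds by design. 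Property \{6\} follows by tracking how diameters and minimum child-separations compare across at most $2h$ padded levels between any weak-ancestor-descendant pair $q,u$ in $T$: the factor $\tau^{-2h}$ in $K$ absorbs worst-case padding, while $\gamma = \tau^{-1}$ matches the per-level diameter decay from Stage~1. The main obstacle is the coordination in Stage~2, where a single averaging must simultaneously deliver a well-separated intermediate level (for $S$) \emph{and} a sufficiently massive subset of clusters at the next $R$-level (for the cut-set inequality); the hypothesis $h \geq 2k^2$ is precisely what makes this double task succeed, at the cost of one factor of $1/k$ per selection step.
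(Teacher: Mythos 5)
Your high-level outline (bottom-up initial construction, then pruning with an iterated H\"older argument) matches the paper's, but several of the key mechanisms are mischaracterized, and as described the argument would not close.

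\textbf{Stage 1.} The paper's initial map (Lemma~\ref{lem:initiate}) is a \emph{partition map}: it is built bottom-up by greedily merging level-$(i+1)$ clusters into level-$i$ clusters, starting from a cluster of maximal modified weight $w_h^k$ and collecting everything within distance $\frac{1-3\tau}{2}\tau^i$ of it. No measure is ever discarded. Crucially, the construction produces a \emph{designated child} at each non-leaf vertex (the one with maximal $w_h^k$), and designated children at the same depth are pairwise $\frac{1-3\tau}{2}\tau^{\depth}$-apart. Your proposed ball-carving with annular averaging is a different construction: it is lossy (an $O(\tau)$-fraction of measure is discarded at each step, which would have to be tracked through the later estimates and could change the exponents), and more importantly it does not produce the designated-child structure, which is what makes Stage~2 work.

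\textbf{Stage 2.} This is where the proposal has a real gap. Your ``pigeonhole over the $h$ intermediate levels, otherwise summing across all $h$ levels contradicts total mass'' is not a valid inference: for $\theta<1$ the function $t\mapsto t^\theta$ is superadditive on partitions, so the power-sum of children at \emph{every} level of an unpruned partition is automatically $\ge \mu(\F_u)^\theta$ with no pigeonhole needed. The actual difficulty is to show that the bound \emph{survives pruning to a single child at some level}. This is what Lemma~\ref{lem:holder} does: it defines auxiliary functions $f_i$ (a max over children at level $i$ and a sum at other levels), proves by reverse induction and H\"older's inequality that $\prod_{i\in H} f_i(r) \ge w(r)^{k-1}$ for every $k$-element set of levels $H$, and then an AM--GM argument shows at least $h-k+1$ levels are ``good'' for pruning. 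Without the designated-child mechanism (which your construction lacks), the ``max at level $i$'' step has nothing to bite on, and the iterated H\"older argument cannot be run.

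Your account of where the second factor of $(1-1/k)$ comes from is also off. It does not arise from ``a further H\"older estimate within that level,'' but from a cross-block alignment problem solved in the inductive proof of Lemma~\ref{lem:sparse-s-2}: after the first $h$ levels one is left with many disjoint subtrees rooted at depth $h$, each with its own set of $h-k+1$ good pruning levels, and one must pick a \emph{single} common level $s_0$ that is simultaneously good for a sufficiently heavy subcollection. The power-sum over that subcollection is estimated against $\left(\sum_j w(r_j)^{1-1/k}\right)^{1-1/k}$, which is where the exponent squares. The hypothesis $h\ge 2k^2$ enters precisely here (it is used to verify the numerical inequality $\frac{2k^2-k+1}{2k^2-2k+2}\le 2^{1/k}$), not as a generic ``enough levels'' margin. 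Your Stage~3 gets the separation ratio roughly right, but it relies on properties of the designated-child construction that Stage~1 as you describe it does not provide.
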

The vertices of the subset  $R\subseteq T$ of Lemma~\ref{lem:after-first-sparsification} satisfy an inductive inequality~\eqref{eq:power.(h-1)/h} on the measures of their images that will allow us  to (eventually) deduce the covering property~\eqref{eq:lemma goal} of Lemma~\ref{lem:3}. Figure~\ref{fig:RS-frag} contains a schematic depiction of the fact that the levels of $R$ and $S$ alternate.

\begin{figure}[ht]
\centering
\fbox{
\begin{minipage}{6in}
\centering \includegraphics[scale=0.6]{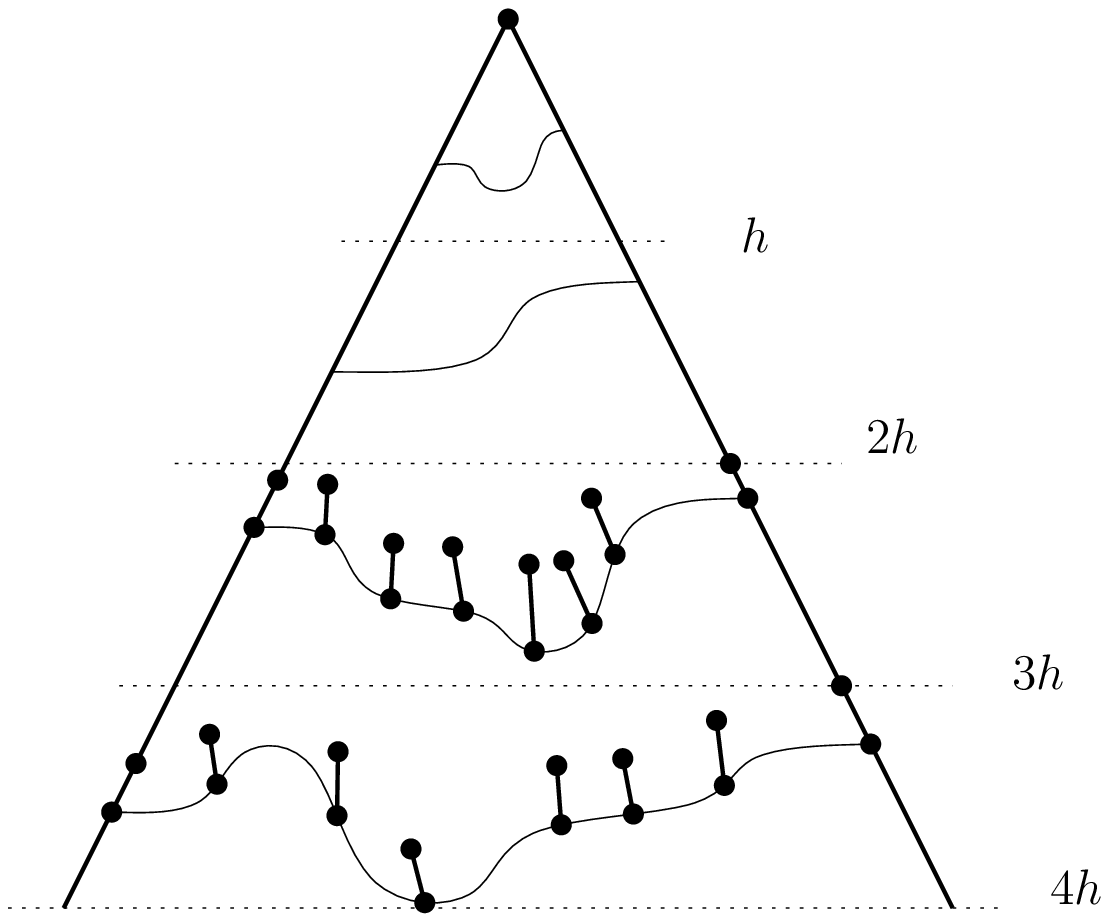}
\caption{A schematic depiction of the tree $T$ corresponding to the fragmentation map $\mathcal F$ of Lemma~\ref{lem:after-first-sparsification}.
The vertices of $R$ are  on the dotted lines. The vertices of $S$ are on the curved solid lines. On every root-leaf path in $T$ the vertices in $R$ and $S$ alternate.
}
\label{fig:RS-frag}
\end{minipage}
}
\end{figure}

We are now in position to prove Lemma~\ref{lem:3} using Lemma~\ref{lem:2.5} and assuming the validity of Lemma~\ref{lem:after-first-sparsification} (recall that Lemma~\ref{lem:after-first-sparsification} will be proved in Section~\ref{sec:proof:after-first-sparsification}).

\begin{proof}[Proof of Lemma~\ref{lem:3}] Let $k,\tau$ be as in Lemma~\ref{lem:3}. Denote $h=2k^2$ and fix $m\in \N$ satisfying
\begin{equation}\label{eq:m big}
\min_{\substack{x,y\in X\\x\neq y}} d(x,y)>\tau^{(m-2)h+1}.
\end{equation}
Apply Lemma~\ref{lem:after-first-sparsification} with the parameters $\tau, m,h,k$ as above, obtaining a fragmentation map $\F:T^1\to 2^X$ with corresponding subsets $S,R\subseteq T^1$. Let $T^2$ be the tree induced by $T^1$ on $S$, i.e., join $u,v\in S$ by an edge of $T^2$ if $u$ is an ancestor of $v$ in $T^1$ and any $w\in S$ that is an ancestor of $v$ in $T^1$ is a weak ancestor of $u$. This is the same as the requirement $v\in D_{T^1}(u,S)$.  Let $\S:T^2\to 2^X$ be the fragmentation map obtained by restricting $\F$ to $S$. To check that $\S$ is indeed a fragmentation map we need to verify that if $u\in \L(T^2)$ then $\F_u$ is a singleton. To see this let $v=\p_{T^2}(u)$ be the parent in $T^2$ of $u$. Since all the leaves of $T$ are at depth $mh$, we have $\depth_{T^1}(v)\ge (m-2)h+1$. Using~\eqref{eq:diam decay cor} we deduce  that $\diam(\F_u)\le \diam(\F_v)\le \tau^{(m-2)h+1}$, implying that $\S_u$ and $\S_v$ are both singletons due to~\eqref{eq:m big}.

Since by Lemma~\ref{lem:after-first-sparsification} we know that the vertices in $S$ are $\frac{1-3\tau}{2\tau}$-separated in the fragmentation map $\F$, it follows that the fragmentation map $\S$ is $\frac{1-3\tau}{2\tau}$-separated. Lemma~\ref{lem:after-first-sparsification} also ensures that $\F$ is $\left(\frac{2}{1-3\tau} \tau^{-2h},\tau^{-1}\right)$-lacunary. This implies that $\S$ is $\left(\frac{2}{1-3\tau} \tau^{-2h},\tau^{-2h}\right)$-lacunary. Indeed, due to Lemma~\ref{lem:after-first-sparsification} we know that if $u\in S$ and $v\in D_{T^1}(u,S)$ is a child of $u$ in $T^2$ then $\depth_{T^1}(v)\le \depth_{T^1}(u)+2h-1$. This implies that if $q,u\in S$ are such that $u$ is a weak descendant of $q$ in $T^2$ then $\depth_{T^1}(u)-\depth_{T^1}(q)\le 2h\left(\depth_{T^2}(u)-\depth_{T^2}(q)\right)$. Hence, if $v,w\in D_{T^1}(u,S)$ are distinct children of $u$ in $T^2$, choose distinct $x,y\in T^1$ that are children of $u$ in $T^1$ and  weak ancestors of $v,w$ (respectively), and use  the fact that $\F$ is $\left(\frac{2}{1-3\tau} \tau^{-2h},\tau^{-1}\right)$-lacunary to deduce that

\begin{multline*}
\diam_d(\S_q)=\diam_d(\F_q)\le \frac{2\tau^{-2h}}{1-3\tau} \cdot\tau^{-\left(\depth_{T^1}(u)-\depth_{T^1}(q)\right)}d\left(\partial\F_x,\partial\F_y\right)\\\le
\frac{2\tau^{-2h}}{1-3\tau}\cdot\tau^{-2h\left(\depth_{T^2}(u)-\depth_{T^2}(q)\right)}d\left(\partial\S_x,\partial\S_y\right).
\end{multline*}

Define  $w_R:R\to (0,\infty)$ by top-down induction as follows. Set
\begin{equation}\label{eq:w_R(r)}
 w_R(r)=\mu(X)^{\left(1-\frac1{k}\right)^2},
\end{equation}
where $r$ is the root of $T^1$. If $u\in R$ is not a leaf and $v\in D_{T^1}(u,R)$ then define
\begin{equation}\label{eq:def w_R}
w_R(v)=\frac{w_R(u)}{\sum_{z\in D_{T^1}(u,R)}\mu(\F_z)^{\left(1-\frac1{k}\right)^2}}\cdot \mu(\F_v)^{\left(1-\frac1{k}\right)^2}.
\end{equation}
Thus for every non-leaf $u\in R$ we have
\begin{equation}\label{eq:equality w_R}
w_R(u)=\sum_{v\in D_{T^1}(u,R)} w_R(v).
\end{equation}
Moreover, it follows from the recursive definition~\eqref{eq:def w_R} combined  with~\eqref{eq:power.(h-1)/h} that
\begin{equation}\label{eq:upper w_R}
\forall\ u\in R,\quad w_R(u)\le \mu(\F_u)^{\left(1-\frac1{k}\right)^2}.
\end{equation}

 Recalling the notation $D_T^*(x,A)$ as given in~\eqref{eq:def D^*}, by summing~\eqref{eq:equality w_R} we see that for all $u\in S\setminus \L(T^2)$ we have
\begin{equation}\label{eq:summed}
\sum_{x\in D^*_{T^1}(u,R)} \sum_{y\in D_{T^1}(x,R)}w_R(y)= \sum_{x\in D^*_{T^1}(u,R)} w_R(x).
\end{equation}
Notice that
\begin{equation}\label{eq:disjoint union}
\bigcup_{x\in D^*_{T^1}(u,R)}D_{T^1}(x,R)=\bigcup_{v\in D_{T^1}(u,S)} D^*_{T^1}(v,R),
\end{equation}
where the unions on both sides of~\eqref{eq:disjoint union} are disjoint. Hence,
\begin{equation}\label{eq:double summation}
\sum_{x\in D^*_{T^1}(u,R)} w_R(x)\stackrel{\eqref{eq:summed}\wedge\eqref{eq:disjoint union}}{=}\sum_{v\in D_{T^1}(u,S)}\sum_{z\in D^*_{T^1}(v,R)}w_R(z).
\end{equation}
Define $w_S:S\to (0,\infty)$ by
\begin{equation}\label{eq:def w S}
w_S(u)=\sum_{z\in D_{T^1}^*(u,R)}w_R(z).
\end{equation}
Then for all $u\in S\setminus \L(T^2)$ we have
$$
\sum_{v\in\p_{T^2}^{-1}(u)}w_S(v)=\sum_{v\in D_{T^1}(u,S)} w_S(v)\stackrel{\eqref{eq:double summation}\wedge\eqref{eq:def w S}}{=} \sum_{x\in D^*_{T^1}(u,R)} w_R(x)
\stackrel{\eqref{eq:def w S}}{=} w_S(u).
$$
This establishes condition~\eqref{eq:sub in compose lem} of Lemma~\ref{lem:2.5} for the weighting $w_S$ of $T^2$. Before applying Lemma~\ref{lem:2.5} we record one more useful fact about $w_S$. Recall that for $u\in S$ the vertex $\p_{T^2}(u)$ is $r$ if $u=r$, and otherwise it is the first proper ancestor of $u$ in $T^1$ which is in $S$.  Take
$u'\in D_{T^1}^*(\p_{T^2}(u),R)$ which is a weak ancestor of $u$ in $T^1$.
Then $\F_{\p_{T^2}(u)}\supseteq \F_{u'}$, and therefore
\begin{multline}\label{eq:w_S better}
\mu\left(\F_{\p_{T^2}(u)}\right)^{\left(1-\frac1{k}\right)^2}\ge \mu\left(\F_{u'}\right)^{\left(1-\frac1{k}\right)^2}\stackrel{\eqref{eq:upper w_R}}{\ge} w_R(u')
\\\stackrel{\eqref{eq:equality w_R}}{=}\sum_{x\in D_{T^1}(u',R)} w_R(x)=\sum_{x\in D_{T^1}^*(u,R)} w_R(x)\stackrel{\eqref{eq:def w S}}{=}w_S(u).
\end{multline}

Apply Lemma~\ref{lem:2.5} to  $\S:T^2\to 2^X$ and  $w_S:T^2\to (0,\infty)$, with $\beta=(1-3\tau)/(2\tau)$ and the parameter $D$ of Lemma~\ref{lem:2.5} replaced by $D/(1+2/\beta)=D(1-3\tau)/(1+\tau)$. Note that our assumption $\tau<(D-2)/(3D+2)$ guarantees that this new value of $D$ is bigger than $2$, so we are indeed allowed to use Lemma~\ref{lem:2.5}. We therefore obtain a subtree $T\subseteq T^2$ with the same root, such that the restricted fragmentation map $\G=\S|_T$ satisfies the following properties.
\begin{itemize}
\item  $(\partial \G_{r(T)},d)$ embeds into an ultrametric space with distortion $D$.
\item Every non-leaf vertex $u\in T\setminus \L(T)$ satisfies
\begin{equation}\label{eq:weight-2.5-use}
\sum_{v\in\p_{T}^{-1}(u)} w_S(v)^{\theta\left(\frac{1-3\tau}{1+\tau}D\right)} \stackrel{\eqref{eq:weight-2.5}}{\ge} w_S(u)^{\theta\left(\frac{1-3\tau}{1+\tau}D\right)}.
\end{equation}
\end{itemize}

Let $G\subseteq T$ be a cut-set of $T$. Define $G_0$ to be a subset of $G$ which is still a cut-set and is minimal with respect to inclusion. Assume inductively that we defined a cut-set $G_i$ of $T$  which is minimal with respect to inclusion. Let $v\in G_i$ be such that $\depth_T(v)$ is maximal. Let $u=\p_T(v)$. By the maximality of $\depth_T(v)$, since $G_i$ is a minimal cut-set of $T$ we necessarily have $\p_T^{-1}(u)\subseteq G_i$, i.e., all the siblings of $v$ in $T$ are also in $G_i$. Note that $G_i'=\left(G_i\cup\{u\}\right)\setminus \p_T^{-1}(u)$ is also a cut-set of $T$, so let $G_{i+1}$ be a subset of $G_i'$ which is still a cut-set of $T$ and is minimal with respect to inclusion. Then,
\begin{multline}\label{eq:to concatenate}
\sum_{v\in G_{i}}w_S(v)^{\theta\left(\frac{1-3\tau}{1+\tau}D\right)} =\sum_{v\in G_i\setminus \p_T^{-1}(u)} w_S(v)^{\theta\left(\frac{1-3\tau}{1+\tau}D\right)}
+\sum_{v\in  \p_T^{-1}(u)} w_S(v)^{\theta\left(\frac{1-3\tau}{1+\tau}D\right)} \\
\stackrel{\eqref{eq:weight-2.5-use}}{\ge} \sum_{v\in G_{i}'}w_S(v)^{\theta\left(\frac{1-3\tau}{1+\tau}D\right)} \ge \sum_{v\in G_{i+1}}w_S(v)^{\theta\left(\frac{1-3\tau}{1+\tau}D\right)}.
\end{multline}
After finitely many iterations of the above process we will arrive at $G_j=\{r\}$. By concatenating the inequalities~\eqref{eq:to concatenate} we see that
\begin{equation}\label{almost}
\sum_{v\in G}w_S(v)^{\theta\left(\frac{1-3\tau}{1+\tau}D\right)}\ge \sum_{v\in G_{0}}w_S(v)^{\theta\left(\frac{1-3\tau}{1+\tau}D\right)}\ge w_S(r)^{\theta\left(\frac{1-3\tau}{1+\tau}D\right)}\stackrel{\eqref{eq:w_R(r)}\wedge\eqref{eq:def w S}}{=}\mu(X)^{\left(1-\frac1{k}\right)^2\theta\left(\frac{1-3\tau}{1+\tau}D\right)}.
\end{equation}
The desired inequality~\eqref{eq:lemma goal} follows from~\eqref{almost} and~\eqref{eq:w_S better}. Since $\S$ is $\frac{1-3\tau}{2\tau}$-separated and $\left(\frac{2}{1-3\tau} \tau^{-4k^2},\tau^{-4k^2}\right)$-lacunary (recall that $h=2k^2$), the same holds true for $\G$ since it is the restriction of $\S$ to the subtree of $T^2$.
\end{proof}

\begin{remark} \label{rem:simplified-argument}
In Theorem~\ref{thm:measure}, if one is willing to
settle for ultrametric distortion $e^{O(1/\e^2)}$, instead of the asymptotically optimal $O(1/\e)$ distortion, then it is possible to simplify Lemma~\ref{lem:3} and its proof. In particular, there is no need to apply Lemma~\ref{lem:2.5}, and consequently also Theorem~\ref{lem:theta(D)}.
Thus one can use the fragmentation map $\mathcal S$ introduced in the proof
of Lemma~\ref{lem:3} as the fragmentation map produced by
Lemma~\ref{lem:3}. Since $\mathcal S$ is $\left(\frac{2}{1-3\tau} \tau^{-4k^2},\tau^{-4k^2}\right)$-lacunary,
Lemma~\ref{lem:sep->ultra} implies that $(\partial \mathcal S_{r(S)},d)$ embeds in an ultrametric space with distortion $\frac{2}{1-3\tau} \tau^{-4k^2}=e^{O(1/\e^2)}$. It is possible to further simplify the proof of the cut-set inequality~\eqref{eq:lemma goal} in the proof of Lemma~\ref{lem:3} by considering a different fragmentation map $\mathcal R$ instead of $\mathcal S$, defined as follows.
Consider the tree $T^3$ induced by $T^1$ on $R$,
and the fragmentation map $\mathcal R:T^3\to 2^X$  obtained by restricting  $\mathcal F$ to $T^3$.
Like $\mathcal S$, the fragmentation map $\mathcal R$ is $\left(\frac{2}{1-3\tau} \tau^{-4k^2},\tau^{-4k^2}\right)$-lacunary, and the proof of~\eqref{eq:lemma goal} for $\mathcal R$
can now be performed by only using the weight function $w_R$, without the need
to consider $w_S$. Unlike $\mathcal S$, the fragmentation map $\mathcal R$
is not separated, and therefore cannot be used with Lemma~\ref{lem:2.5}. However, for the above simplified
argument, Lemma~\ref{lem:2.5} and the separation property are not needed.
\end{remark}

\section{An intermediate fragmentation map: proof of Lemma~\ref{lem:after-first-sparsification}}
\label{sec:proof:after-first-sparsification}

Here we prove Lemma~\ref{lem:after-first-sparsification}.
The proof uses two building blocks:
Lemma~\ref{lem:initiate}, which constructs an initial partition map,
and  Lemma~\ref{lem:sparse-s-2}, which prunes a given weighted rooted tree.
The basic idea of the proof Lemma~\ref{lem:after-first-sparsification} can be described as follows.
Lemma~\ref{lem:initiate} constructs an initial partition map together  with a ``designated child"
for every non leaf vertex. The designated children have, roughly speaking, the largest weight among
their siblings, and they are also pairwise separated. The pruning step of
Lemma~\ref{lem:sparse-s-2} can now focus on the combinatorial structure of the partition map, pruning
the associated tree so as to keep at some levels only the designated children of the level above.
This guarantees the separation property as well as the desired estimate~\eqref{eq:power.(h-1)/h}.


The exact notion of ``size" used to choose  designated children
is tailored to be compatible with the
ensuing pruning step,
and is the content of the following definition.
Observe that any fragmentation map $\F:T\to 2^{X}$ induces a
weighting $w:T\to (0,\infty)$ of the vertices of $T$ given by
$w(u)=\mu(\F_u)$.
For our purpose, we will need a modified version of  it,
described in the following definition.
%
%
%
%

\begin{definition}[Modified weight function]\label{def:w modified}
Fix integers  $h,k\ge 2$ and let $T$ be a finite graph-theoretical
rooted tree, all of whose leaves are at the same depth, which is
divisible by $h$. Assume that we are given $w:T\to (0,\infty)$.
Define a new function $w_h^k:T\to (0,\infty)$ as follows. If $u\in
\L(T)$ then
$$
w_h^k(u)=w(u)^{\frac{k-1}{k}}.
$$
Continue defining $w_h^k(u)$ by reverse induction on $\depth_T(u)$ as
follows.
\begin{equation}\label{eq:w_k}
w_h^k(u)=\left\{\begin{array}{ll}w(u)^{\frac{k-1}{k}} &
\mathrm{if}\ h \mid\depth_T(u),\\
\sum_{v\in \p^{-1}(u)} w_h^k(v)& \mathrm{if}\ h\nmid
\depth_T(u).\end{array}\right.
\end{equation}
Equivalently, if $u\in T$ and $(j-1)h<\depth_T(u)\le jh$ for some
integer $j$ then
\begin{equation}\label{eq:explicit w_h^k}
w_h^k(u)=\sum_{\substack{v\in T_u\\ \depth_T(v)=jh}}
w(v)^{\frac{k-1}{k}}.
\end{equation}
\end{definition}

\begin{lemma}\label{lem:initiate}
Let $(X,d,\mu)$ be a finite metric measure space of diameter $1$ and
$\tau\in (0,1/3)$. For every triple of integers $m,h,k\ge 2$ there exists a
fragmentation map $\F:T\to 2^X$ with the following properties.
\begin{itemize}
\item All the leaves of the tree $T$ are at depth $mh$.
\item $\F$ is a partition map, i.e., $\partial F_{r(T)}=X$.
\item For every $u\in T$ we have
\begin{equation}\label{eq:diam decay}
\diam(\F_u)\le \tau^{\depth_T(u)}.
\end{equation}
\item Every non-leaf vertex $u\in T\setminus \L(T)$ has a ``designated child" $\dc(u)\in \p^{-1}(u)$ such that
\begin{equation}\label{eq:designated}
w_h^k(\dc(u))=\max_{v\in \p^{-1}(u)} w_h^k\left(v\right),
\end{equation}
where $w:T\to (0,\infty)$ is given by $w(u)=\mu(\F_u)$ and $w_h^k:T\to
(0,\infty)$ is the modified weight function from
Definition~\ref{def:w modified}.
\item Suppose that $u,v\in T\setminus\L(T)$ satisfy $\depth_T(u)=\depth_T(v)$ and $u\neq v$. Then
\begin{equation}\label{eq:sep designated}
d\left(\F_{\dc(u)},\F_{\dc(v)}\right)>\frac{1-3\tau}{2}\cdot \tau^{\depth_T(u)}.
\end{equation}
\end{itemize}
\end{lemma}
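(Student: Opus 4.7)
The plan is to build $\F:T\to 2^X$ top-down, refining the partition one depth at a time and selecting designated centers along the way. At depth $0$, set $\F(r(T))=X$, which satisfies~\eqref{eq:diam decay} since $\diam(X)=1=\tau^0$. Assuming the partition map has been defined down to depth $j$, refine each depth-$j$ cluster $\F_u$ (of diameter at most $\tau^j$) by a \emph{ball-carving} at scale $\tau^{j+1}/2$: pick a designated center $c_u\in\F_u$, let $\F_{\dc(u)}:=B(c_u,\tau^{j+1}/2)\cap\F_u$, and cover the leftover region $\F_u\setminus B(c_u,\tau^{j+1}/2)$ by iteratively carving additional radius-$\tau^{j+1}/2$ balls to form non-designated children. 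Every child so produced has diameter at most $\tau^{j+1}$, so~\eqref{eq:diam decay} is preserved.

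The heart of the argument is the joint selection of the designated centers $\{c_u\}$ across all depth-$j$ parents so that both~\eqref{eq:designated} and~\eqref{eq:sep designated} hold. Since $\F_{\dc(u)}\subseteq B(c_u,\tau^{j+1}/2)$, property~\eqref{eq:sep designated} reduces to the packing requirement
\[
d(c_u,c_v)>\tfrac{(1-\tau)\tau^j}{2}\quad\text{for all distinct }u,v\text{ at depth }j.
\]
I plan to pick the centers greedily over a fixed enumeration of depth-$j$ parents, each time selecting $c_u\in\F_u$ to maximize $\mu(B(c_u,\tau^{j+1}/2)\cap\F_u)$ subject to the packing constraint from previously chosen centers. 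The non-designated children, produced by the subsequent ball-carving at the same radius, each have $\mu$-mass no larger than $\mu(\F_{\dc(u)})$ by construction, so the designated child is heaviest among the children of $u$ in the $\mu$-mass sense.

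Because property~\eqref{eq:designated} concerns $w_h^k$ rather than $\mu$, I plan to carry out the construction a full block of $h$ depths at a time: down to the next multiple of $h$, fix the entire sub-partition, then use~\eqref{eq:explicit w_h^k} to compute $w_h^k$ at the block boundary and propagate it back up via~\eqref{eq:w_k}. The argument above (applied to $\mu^{(k-1)/k}$ instead of $\mu$) shows that $\F_{\dc(u)}$ accumulates the largest sum of $\mu^{(k-1)/k}$ over its block-boundary descendants, giving~\eqref{eq:designated}. If some depth $j_0<mh$ is reached at which all clusters are singletons (which happens once $\tau^{j_0}$ falls below the minimum positive distance of $X$), extend the tree to depth $mh$ by appending single-child chains, trivially preserving all the required properties.

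The main technical obstacle is the feasibility of the greedy center-selection step, i.e., showing that the admissible set $\{c\in\F_u:d(c,c_v)>\frac{(1-\tau)\tau^j}{2}\text{ for every previously chosen }c_v\}$ is non-empty at every step. Since $\diam(\F_u)\le\tau^j$ and the forbidden radius $\frac{(1-\tau)\tau^j}{2}$ is strictly less than $\tau^j$ (here the hypothesis $\tau<1/3$ enters), a single prior center cannot block all of $\F_u$; the harder case of many prior centers clustering near $\F_u$ has to be controlled by combining the disjointness and diameter bounds of the parents with an appropriate enumeration order (for instance, processing clusters in order of proximity to previously placed centers). Verifying this non-emptiness is the combinatorial heart of the argument.
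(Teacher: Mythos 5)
Your plan inverts the paper's construction: you build the partition top--down by ball-carving and then try to separate the designated centers, whereas the paper builds it bottom--up by starting from singletons at depth $mh$ and greedily merging upward. The difference is not cosmetic; it is exactly where your proof has a gap that you yourself flag at the end, and that gap does not appear to be fillable within the top--down framework.

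The issue is feasibility of the center selection. At depth $j$ you have disjoint clusters $\F_u$ of diameter at most $\tau^j$ and you want to choose one center $c_u\in \F_u$ per cluster so that all chosen centers are pairwise at distance greater than $\frac{(1-\tau)}{2}\tau^j$. Disjointness of the $\F_u$ gives no lower bound whatsoever on $d(\F_u,\F_v)$, so a small cluster $\F_u$ can lie entirely within the forbidden zone of a previously placed center $c_v$, in which case the admissible set is empty. Enumeration order cannot rescue this in general: several small clusters can all be squeezed within $\frac{(1-\tau)}{2}\tau^j$ of one another, so no ordering can make all of them separable. The paper avoids this entirely by going in the other direction: it never tries to separate centers of an already-fixed partition; instead it \emph{forms} the coarser partition so that the designated children are separated by design. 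Concretely, at level $i$ it repeatedly picks the remaining level-$(i+1)$ cluster of maximal $w_h^k$, merges all remaining level-$(i+1)$ clusters within distance $\frac{1-3\tau}{2}\tau^i$ of it into one new level-$i$ vertex, and declares the picked cluster the designated child. Successive picks are then automatically at distance greater than $\frac{1-3\tau}{2}\tau^i$ from one another, which is precisely~\eqref{eq:sep designated}, and the diameter bound $\diam(\F_u)\le \tau^i$ follows since every merged cluster has diameter at most $\tau^{i+1}$ and lies within $\frac{1-3\tau}{2}\tau^i$ of the central one.

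There are two further difficulties with your route even granting the separation step. First, choosing $c_u$ to maximize $\mu(B(c_u,\tau^{j+1}/2)\cap\F_u)$ \emph{subject to} the packing constraint does not guarantee that $\F_{\dc(u)}$ is heaviest among the children of $u$, because the subsequent non-designated carves inside $\F_u$ are not subject to that constraint and one of them could capture more mass than the constrained designated carve. Second, the quantity you need to maximize in~\eqref{eq:designated} is $w_h^k$, which at depths not divisible by $h$ aggregates masses along the subtree down to the next multiple of $h$; thus the designated child at depth $j$ depends on the entire block of the partition below depth $j$, which in a top--down scheme you have not yet built when you need to place $c_u$ (your block-at-a-time suggestion does not resolve this, since the placement of centers at depth $j$ already determines the children and hence the block structure). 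The bottom--up construction sidesteps both issues: when level $i$ is being assembled, all $w_h^k$ values on level $i+1$ are already known, the greedy pick is literally the maximizer of $w_h^k$ among the clusters grouped under $u$, and the $w_h^k$ value of $u$ itself is computed immediately from the children.

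In short: the approach is genuinely different from the paper's, and the acknowledged obstacle is a real gap that seems intrinsic to the top--down ordering rather than a missing technical step. Rebuilding the argument bottom--up, as in the paper, is the natural fix.
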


A schematic description of the partition map that is constructed in Lemma~\ref{lem:initiate} is depicted in Figure~\ref{fig:initial-part}. Lemma~\ref{lem:initiate}  will be proved in Section~\ref{sec:initilize}.

\begin{figure}[ht]
\centering
\fbox{
\begin{minipage}{6in}
\centering \includegraphics[scale=0.9]{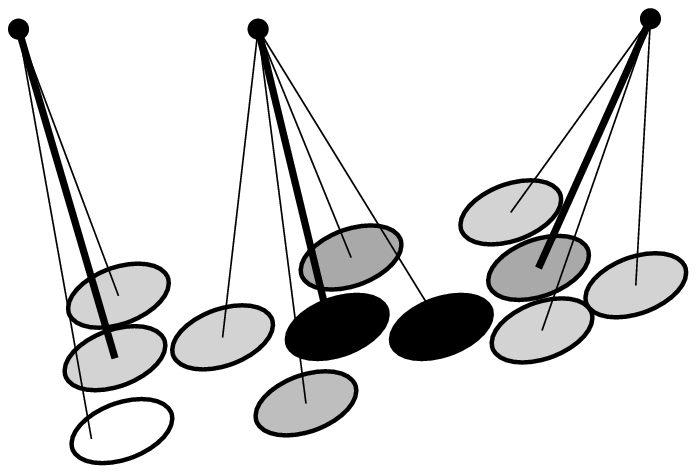}

\caption{A schematic depiction of two levels in the initial partition map that is constructed in Lemma~\ref{lem:initiate}. In the lower level the darkness of the cluster represent their weight $w_h^k$; darker means larger weight.
A thick line represents the designated child of the higher level cluster.
Notice that the designated child is the  child of its parent of largest weight,
and that the designated children are far from each other.}
\label{fig:initial-part}
\end{minipage}
}
\end{figure}


Lemma~\ref{lem:sparse-s-2} below is the pruning step.
The appropriate setting for the
pruning is a certain class of weighted trees, which we now
introduce; the definition below contains the tree of
Lemma~\ref{lem:initiate} as a special case.

\begin{definition}[Subadditive weighted tree with designated
children]\label{def:subadditive}
 Fix integers $h,k\ge 2$. A subadditive weighted tree with designated children is a triple $(T,w,\dc)$
 consisting of a finite rooted graph-theoretical tree $T$, a mapping
$w:T\to (0,\infty)$ and for every non-leaf vertex $u\in T\setminus
\L(T)$ a ``designated child" $\dc(u)\in \p^{-1}(u)$, such that the
following conditions hold true.
\begin{itemize}
\item All the leaves of $T$ are at the same depth, which is divisible by $h$.
\item For every non-leaf vertex $u\in T\setminus \L(T)$,
\begin{equation}\label{eq:subaddtitive}
w(u)\le\sum_{v\in \p^{-1}(u)} w(v).
\end{equation}
\item For every non-leaf vertex $u\in T\setminus \L(T)$,
\begin{equation}\label{eq:max c}
w_h^k(\dc(u))=\max_{v\in \p^{-1}(u)}w_h^k(v),
\end{equation}
where $w_h^k:T\to (0,\infty)$ is the modified weight function of Definition~\ref{def:w modified}.
\end{itemize}
\end{definition}

\begin{definition}[Subtree sparsified at a subset]
\label{def:spars at S}
Fix two integers $h,k\ge 2$ and let let $(T,w,\dc)$ be a subadditive
weighted tree with designated children (recall
Definition~\ref{def:subadditive}). Let $T'$ be a subtree of $T$ (see
Definition~\ref{def:subtree}) and $S\subseteq T'$. We say that the
subtree $T'$ is sparsified at $S$ if every $v\in S$ is a designated
child and has no siblings in $T'$. For the purpose of this
definition we declare the root $r(T)$ to be a designated child,
i.e., we allow $r(T)\in S$. Thus $v\in T$ is a designated child if
it is either the root of $T$ or $\dc(\p(v))=v$.
\end{definition}

\begin{lemma}
\label{lem:sparse-s-2}
Fix two integers $h,k\ge 2$ with
$h\ge 2k^2$.
Let $(T,w,\dc)$ be a subadditive
weighted tree with designated children as in
Definition~\ref{def:subadditive} (thus all the leaves of $T$ are at
the same depth, which is divisible by $h$, and the designated child
map $\dc$ satisfies~\eqref{eq:max c}). Then there exists a subtree
$T'$ of $T$ with the same root as $T$, and two subsets
$R,S\subseteq T'$, both containing the root of $T'$,
with the following properties:
\begin{compactitem}
\item For any non-leaf $u\in T'$ we have $\dc(u)\in T'$.
\item $R=\{v\in T': h\mid \depth_{T}(v)\}$. 
\item For any non-leaf vertex $u\in R$,
\begin{equation} \label{eq:power-kids}
 \sum_{v\in D_{T'}(u,R)} w(v)^{\left(1-\frac{1}{k}\right)^2} \ge w(u)^{\left(1-\frac{1}{k}\right)^2} .
 \end{equation}
Recall that $D_{T'}(\cdot,\cdot)$ is given in
Definition~\ref{def:D*}.
\item For every $u,v\in R$ such that $\depth_T(v)=\depth_T(u)+h$ and $v$ is a descendant of $u$, there is one and only one $w\in S$ such that $w$ lies on the path joining $u$ and $v$ and $\depth_T(u)<\depth_T(w)\le \depth_T(v)$.
\item For any $u\in T'$ such that $D_{T'}(u,S)\neq\emptyset$, all the vertices of $D_{T'}(u,S)$ are at the same depth in $T'_u$,
which is an integer between $1$ and $2h$.
\item $T'$ is sparsified at the subset $S$.
\end{compactitem}
\end{lemma}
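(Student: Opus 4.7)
The plan is a top-down recursive construction of $T'$, $R$, and $S$. The condition that $D_{T'}(u,S)$ lies at a single depth in $T'_u$ for every $u\in T'$ forces all $S$-vertices inside a single chunk of $T'$ (a strip between two consecutive $R$-levels) to lie at the same depth, although this depth may vary between chunks sitting under different $S$-ancestors. Hence I would parameterize the construction by an assignment $y\mapsto s(y)\in\{1,\dots,h\}$ to each $S$-vertex $y$ (starting with the root $r(T)$, which is by convention a designated child), where $s(y)$ specifies the depth, below each $R$-descendant of $y$, at which the next layer of $S$-vertices will be placed.

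The recursion step: given an $S$-vertex $y$ just produced, let $U_y\subseteq T'$ denote the $R$-vertices at the next $R$-level descending from $y$. For a candidate $s\in\{1,\dots,h\}$, ``sparsifying $U_y$ at level $s$'' means that for every $u\in U_y$, at depth $\depth_T(u)+s$ one retains only the designated children $\dc(q)$ of the vertices $q$ at depth $\depth_T(u)+s-1$, deleting all other siblings and their subtrees; each retained $\dc(q)$ becomes a new element of $S$. The task is to choose a single $s=s(y)$ such that for every $u\in U_y$ the resulting cut-set $D_{T'}(u,R)$ satisfies the weight inequality~\eqref{eq:power-kids}.

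To produce this common good $s$, I would first apply the concavity inequality $\sum_v a_v^{(k-1)/k}\ge\bigl(\sum_v a_v\bigr)^{(k-1)/k}$ with $a_v=w_h^k(v)=w(v)^{(k-1)/k}$, reducing the claim to the existence of $s$ for which the retained $w_h^k$-mass after sparsification exceeds $w_h^k(u)=w(u)^{(k-1)/k}$ simultaneously for every $u\in U_y$. The identity $w_h^k(\dc(q))=\max_{v\in\p^{-1}(q)}w_h^k(v)$ for designated children, combined with additivity $w_h^k(q)=\sum_{v\in\p^{-1}(q)}w_h^k(v)$ at non-$R$-depths (formulas~\eqref{eq:w_k},~\eqref{eq:explicit w_h^k}), controls how this retained mass evolves as $s$ varies. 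For a fixed $u\in U_y$, a telescoping-loss argument across the $h$ levels of its chunk bounds the number of ``bad'' $s\in\{1,\dots,h\}$ at which the surviving $w_h^k$-mass falls short. By carefully aggregating these per-$u$ estimates using the cascaded subadditivity~\eqref{eq:subaddtitive} of $w$ inside the chunk of $u$, one shows that the aggregate ``bad weight'' at each level is controlled by a constant multiple of $h/k$; the hypothesis $h\ge 2k^{2}$ then guarantees the existence of a common good level, which we take to be $s(y)$.

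The principal difficulty will be the aggregation step, since the size of $U_y$ is not a priori bounded and a naive union bound over $u\in U_y$ is inadequate. The reason the exponent $(1-1/k)^{2}$ appears as a double application of $(k-1)/k$ is precisely that two independent sources of slack are required: one from the definition of $w_h^{k}$ via the $(k-1)/k$-power at $R$-levels, which absorbs the designated-child loss between consecutive $R$-levels; and a second from the final concavity step applied at the next $R$-level, which absorbs the post-sparsification loss. Each of these contributes a multiplicative factor of size roughly $k$ to the pigeonhole budget, which is what forces the quadratic condition $h\ge 2k^{2}$ rather than merely $h\gtrsim k$.
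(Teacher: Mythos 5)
Your top-down blueprint (parametrize by a sparsification level per chunk, use the designated-child map to keep only $\dc$-vertices at that level, aim for~\eqref{eq:power-kids} by combining Definition~\ref{def:w modified} with concavity) matches the paper's spirit, and you are right that the exponent $(1-1/k)^2$ arises from two applications of the $(k-1)/k$-power trick. But the step you flag as ``the principal difficulty'' is exactly where the proposal does not go through, and the paper resolves it in a genuinely different way. You want a single level $s=s(y)$ that is \emph{good for every} $u\in U_y$ simultaneously. The per-$u$ Lemma~\ref{lem:holder}-type bound gives at most $k-1$ bad levels for each $u$, but $|U_y|$ is unbounded, so the union of bad levels can cover $\{1,\dots,h\}$ no matter how large $h$ is relative to $k$. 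Your proposed fix --- weight the bad levels and show the ``aggregate bad weight'' per level is a constant times $h/k$ --- does not avoid this: if you insist on a level good for \emph{all} $u$, a single heavy $u$ whose bad levels include $s$ kills $s$, and the weight of the other $u$'s is irrelevant. No pigeonhole on weighted bad levels can yield a universally good $s$.

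The paper's proof abandons that goal. It strengthens the inductive hypothesis to a \emph{collection} of trees $(T_1,\dots,T_\ell)$ of equal height and only asks for a subset $C\subseteq\{1,\dots,\ell\}$ of indices plus a common sparsification level $s_0$ so that $s_0\in L_j$ for $j\in C$ and
$\sum_{j\in C} w(r_j)^{(1-1/k)^2}\ge\bigl(\sum_{i=1}^\ell w(r_i)^{1-1/k}\bigr)^{1-1/k}$
(inequality~\eqref{eq:double power}). It first applies Lemma~\ref{lem:holder} to the depth-$h$ truncation of each $T_j$, obtaining $L_j\subseteq\{1,\dots,h\}$ with $|L_j|\ge h-k+1$; then it singles out the heaviest root $j_0$, restricts to $L_j'=L_j\cap L_{j_0}$ (of size at least $h-2(k-1)$), and picks $s_0\in L_{j_0}$ by an averaging argument that maximizes the retained $w^{1-1/k}$-mass over $j\neq j_0$ with $s_0\in L_j'$. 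The indices $j$ with $s_0\notin L_j$ are simply discarded; the weight lost this way is absorbed by the exponent drop from $1-1/k$ to $(1-1/k)^2$, and this is exactly where $h\ge 2k^2$ enters (the elementary estimate $(2k^2-k+1)/(2k^2-2k+2)\le 2^{1/k}$ after using that the heaviest root carries at least half the $w^{1-1/k}$-mass). So the mechanism producing the second $1-1/k$ factor is not a concavity step applied ``at the next $R$-level'' as you describe, but a budget that pays for dropping the $u$'s incompatible with the chosen $s_0$. Without this reformulation --- dropping siblings rather than finding a level good for all of them --- the recursion as you set it up cannot close.
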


Lemma~\ref{lem:sparse-s-2} will be proved in Section~\ref{sec:holder}. Assuming the validity of Lemma~\ref{lem:sparse-s-2}, as well as the validity of Lemma~\ref{lem:initiate} (which will be proved is Section~\ref{sec:initilize}), we  are now in position to deduce Lemma~\ref{lem:after-first-sparsification}.

\begin{proof} [Proof of Lemma~\ref{lem:after-first-sparsification}]
Let $\F^0:T^0\to 2^X$ be the partition map of Lemma~\ref{lem:initiate}, constructed with parameters $m,h,k$, and having the associated designated child map $\dc$ from Lemma~\ref{lem:initiate}. Let $T$ be the tree obtained by applying Lemma~\ref{lem:sparse-s-2} to $(T^0,w,\dc)$, where  $w:T^0\to (0,\infty)$ is given by $w(v)=\mu(\F^0_v)$. Define a fragmentation map $\F:T\to 2^X$ by $\F=\F^0|_T$, i.e., by restricting $\F^0$ to the subtree $T$. Properties~\{\ref{it:0}\}, \{\ref{it:1}\} are satisfied by $\F^0$ due to Lemma~\ref{lem:initiate}, and therefore they are also satisfied by $\F$ since $T$ has the same root as $T^0$.
Properties \{\ref{it:2}\}, \{\ref{it:3}\}, 
are part of the conclusion of Lemma~\ref{lem:sparse-s-2}. It remains
to prove properties~\{\ref{it:4}\} and \{\ref{it:6}\}.

Assume that $u\in S$. Take $y\in \left(\partial \F_{r(T)}\right)\setminus \left(\partial \F_u \right)$. In order to prove property~\{\ref{it:4}\}
it suffices to show that $d(y,\F_u)\ge \frac{1-3\tau}{2\tau}\diam(\F_u)$.
By property \{\ref{it:3}\} it follows that $u=\dc(\p(u))$.  Let $w=\lca_T (u,y)$ and take $u',y'\in D_T(w,S)$ such that $u'$ is a weak ancestor of $u$ and $y'$ is a weak ancestor of $y$. By Lemma~\ref{lem:sparse-s-2} we know that $\depth_T(u')=\depth_T(y')$, and therefore by conclusion~\eqref{eq:sep designated} of Lemma~\ref{lem:initiate} and using the fact that $\dc(\p(u'))=u'$ and $\dc(\p(y'))=y'$ (because $u',y'\in S$),
$$
d\left(y,\F_u\right)\ge d\left(\F_{y'},\F_{u'}\right)\ge \frac{1-3\tau}{2}\tau^{\depth_T(u')-1}\ge \frac{1-3\tau}{2\tau}\tau^{\depth_T(u)}\stackrel{\eqref{eq:diam decay cor}}{\ge} \frac{1-3\tau}{2\tau}\diam(\F_u).
$$

It remains to prove property~\{\ref{it:6}\}. Take $q\in T$ and let $u\in T$ be a weak descendent of $q$ that has at least two children in $T$, i.e., $v,w\in \p^{-1}(u)\cap T$, $v\neq w$. Our goal is to show that
\begin{equation}\label{eq:goal lacunary}
\diam\left(\F_q\right)\le \frac{2\tau^{-2h}}{1-3\tau}\cdot\tau^{\depth_T(q)-\depth_T(u)} \cdot d\left(\partial\F_v,\partial\F_w\right).
\end{equation}
Since $v$ and $w$ are siblings in $T$ we know by \{\ref{it:3}\} that 
$\{v,w\}\cap S=\emptyset$.
Note that
$$
\partial F_v=\bigcup_{y\in D(v,S)} \partial\F_y\quad\mathrm{and}\quad \partial F_w=\bigcup_{x\in D(w,S)} \partial\F_x,
$$
and therefore
\begin{equation}\label{eq:boundary dist formula}
d\left(\partial\F_v,\partial F_w\right)=\min_{\substack{y\in D(v,S)\\ x\in D(w,S)}} d\left(\partial F_y,\partial F_x\right).
\end{equation}
Note that since $\{v,w\}\cap S=\emptyset$ we have $D(v,S)\cup D(w,S)\subseteq D(u,S)$.  By Lemma~\ref{lem:sparse-s-2} it follows that all the vertices in $D(v,S)\cup D(w,S)$ are at the same depth in $T$. Denote this depth by $\ell$. Due to Lemma~\ref{lem:sparse-s-2}  we know that $\ell\le \depth_T(u)+2h$. By conclusion~\eqref{eq:sep designated} of Lemma~\ref{lem:initiate} we deduce that for all $y\in D(v,S)$ and $x\in D(w,S)$ we have
\begin{equation}\label{eq:lower boundary dist}
d\left(\partial\F_y,\partial\F_x\right)\ge d\left(\F_y,\F_x\right)=d\left(\F_{\dc(\p(y))},\F_{\dc(\p(x))}\right)>\frac{1-3\tau}{2}\tau^\ell\ge \frac{1-3\tau}{2}\tau^{\depth_T(u)+2h}.
\end{equation}
Now, the desired inequality \eqref{eq:goal lacunary} is proved as
follows.
\begin{equation*}
d\left(\partial\F_v,\partial F_w\right)
\stackrel{\eqref{eq:boundary dist formula}\wedge\eqref{eq:lower boundary dist}}{>}
 \frac{1-3\tau}{2}\tau^{\depth_T(u)+2h}\stackrel{\eqref{eq:diam decay
 cor}}{\ge}
 \frac{1-3\tau}{2}\tau^{\depth_T(u)-\depth_T(q)+2h}\diam(\F_q).\qedhere
\end{equation*}
\end{proof}

\section{The initial fragmentation map: proof of Lemma~\ref{lem:initiate}}
\label{sec:initilize}


\begin{proof}[Proof of Lemma~\ref{lem:initiate}]
The construction of the initial fragmentation map
will be in a bottom-up fashion: the tree $T$ will
be decomposed as a disjoint union on ``levels"
$V_0,V_1,\ldots,V_{mh}$, where $V_i$ are the vertices at depth $i$.
We will construct these levels $V_i$ and the mappings $\F:V_i\to
2^X$ and $w_h^k:V_i\to (0,\infty)$ by reverse induction on $i$, and
describe inductively for each $v\in V_{i+1}$ its parent $u\in V_i$,
as well as the designated child $\dc(u)$. At the end of this
construction $V_0$ will consist of a single vertex, the root of $T$.

Define $\ell_{mh}=|X|$ and write $X=\{x_1,\ldots,x_{\ell_{mh}}\}$.
The initial level $V_{mh}$ consists of the leaves of $T$, and it is
defined to be $V_{mh}=\{v^{mh}_j\}_{j=1}^{\ell_{mh}}$. For all $j\in
\{1,\ldots,\ell_{mh}\}$ we also set
$$
\F_{v^{mh}_j}=\{x_j\}\quad \mathrm{and}\quad
w_h^k(v_j^{mh})=w(x_j)^{\frac{k-1}{k}}=\mu(\{x_j\})^{\frac{k-1}{k}}.$$
Assume inductively that for $i\in \{1,\ldots,mh-1\}$ we have already
defined
$$
V_{i+1}=\left\{v_1^{i+1},v_2^{i+1},\ldots,v_{\ell_{i+1}}^{i+1}\right\},
$$
and the mappings $\F:V_{i+1}\to 2^X$ and $w_h^k:V_{i+1}\to (0,\infty)$.

Choose $j_1\in \{1,\ldots,\ell_{i+1}\}$ such that
$$
w_h^k\left(v_{j_1}^{i+1}\right)=\max_{j\in \{1,\ldots,\ell_{i+1}\}}
w_h^k\left(v_{j}^{i+1}\right).
$$
Define
$$
A_1^i=\left\{s\in \{1,\ldots,\ell_{i+1}\}:\
d\left(\F_{v_{j_1}^{i+1}},\F_{v_{s}^{i+1}}\right)\le
\frac{1-3\tau}{2}\tau^i\right\}.
$$
Create a new vertex $v_1^i\in V_i$ and define
$$
\F_{v_1^i}=\bigcup_{s\in A_1^i} \F_{v^{i+1}_s}.
$$
Also, declare the vertices $\{v^{i+1}_s\}_{s\in A_1^i}\subseteq
V_{i+1}$ to be the children of $v_1^i$, and in accordance
with~\eqref{eq:w_k} define
$$
w_h^k\left(v_1^i\right)=\left\{\begin{array}{ll}w\left(v_1^i\right)^{\frac{k-1}{k}}
&
\mathrm{if}\ h \mid i,\\
\sum_{s\in A_1^i} w_h^k\left(v_s^{i+1}\right)& \mathrm{if}\ h\nmid
i.\end{array}\right.
$$
Finally, set
$$
\dc\left(v_1^i\right)=v_{j_1}^{i+1}.
$$

Continuing inductively, assume that we have defined
$v_1^i,v_2^i,\ldots,v_z^i\in V_i$, together with nonempty disjoint
sets
$$A_1^i,\ldots,A_z^i\subseteq \{1,\ldots,\ell_{i+1}\}.$$
If $\bigcup_{t=1}^z A_t^i=\{1,\ldots,\ell_{i+1}\}$ then define
$\ell_i=z$ and $V_i=\left\{v_1^i,v_2^i,\ldots,v_{z}^i\right\}$.
Otherwise, choose $j_{z+1}\in \{1,\ldots,\ell_{i+1}\}\setminus
\bigcup_{t=1}^z A_t^i$ such that
\begin{equation}\label{eq:mu max}
w_h^k\left(v_{j_{z+1}}^{i+1}\right)=\max_{j\in
\{1,\ldots,\ell_{i+1}\}\setminus \bigcup_{t=1}^z A_t^i}
w_h^k\left(v_{j}^{i+1}\right),
\end{equation}
and define
\begin{equation}\label{eq:def A r+1}
A_{z+1}^i=\left\{s\in \{1,\ldots,\ell_{i+1}\}\setminus
\bigcup_{t=1}^z A_t^i:\
d\left(\F_{v_{j_{z+1}}^{i+1}},\F_{v_{s}^{i+1}}\right)\le
\frac{1-3\tau}{2}\tau^i\right\}.
\end{equation}
Create a new vertex $v_{z+1}^i\in V_i$ and define
\begin{equation}\label{eq:def F r+1}
\F_{v_{z+1}^i}=\bigcup_{s\in A_{z+1}^i} \F_{v^{i+1}_s}.
\end{equation}
Also, declare the vertices $\{v^{i+1}_s\}_{s\in A_{z+1}^i}\subseteq
V_{i+1}$ to be the children of $v_{z+1}^i$ and define
$$
w_h^k\left(v_{z+1}^i\right)=\left\{\begin{array}{ll}w\left(v_{z+1}^i\right)^{\frac{k-1}{k}}
&
\mathrm{if}\ k \mid i,\\
\sum_{s\in A_{z+1}^i} w_h^k\left(v_s^{i+1}\right)& \mathrm{if}\
k\nmid i.\end{array}\right.
$$
Finally, set
\begin{equation}\label{eq:def dc}
\dc\left(v_{z+1}^i\right)=v_{j_{z+1}}^{i+1}.
\end{equation}

The above recursive procedure must terminate, yielding the level $i$
set $V_i$. We then proceed inductively until the set $V_1$ has been
defined. We conclude by defining $V_0$ to be a single new vertex
$r(T)$ (the root) with all the vertices in $V_1$ its children. The
designated child of the root, $\dc(r(T))$, is chosen to be a vertex
$u\in V_1$ such that $w_h^k\left(u\right)=\max_{v\in
V_1}w_h^k\left(v\right)$. We also set $\F_{r(T)}=X$ and
$w_h^k(r(T))=\mu(X)^{\frac{k-1}{k}}$.

The resulting fragmentation map $\F:T\to 2^X$ is by definition a
partition map, since $\F(V_{mh})=\F(\L(T))=X$. Also, the
construction above guarantees the validity of~\eqref{eq:designated}
due to~\eqref{eq:mu max} and~\eqref{eq:def dc}.

We shall now prove~\eqref{eq:diam decay} by reverse induction on
$\depth_T(u)$. If $\depth_T(u)=mh$ then $\diam(\F_u)=0$ and there is
nothing to prove. Assuming the validity of~\eqref{eq:diam decay}
whenever $\depth_T(u)=i+1$, suppose that $\depth_T(u)=i$ and
moreover that $u=v^i_{z+1}$ in the above construction. By virtue
of~\eqref{eq:def A r+1} and~\eqref{eq:def F r+1} we know that
$$
\diam\left(\F_u\right)=\diam\left(\F_{v_{z+1}^i}\right)\le
3\max_{s\in
A_{z+1}^i}\diam\left(\F_{v_s}^{i+1}\right)+2\frac{1-3\tau}{2}\tau^i
\le 3\tau^{i+1}+(1-3\tau)\tau^{i}=\tau^i.
$$
Since~\eqref{eq:diam decay} is also valid for $i=0$ (because
$\diam(X)=1$), this concludes the proof of~\eqref{eq:diam decay}.

It remains to prove~\eqref{eq:sep designated}. Since we are assuming
that $u\neq v$ are non-leaf vertices and $\depth_T(u)=\depth_T(v)$,
we may write  $u=v_s^i$ and $v=v_t^i$ for some $i\in \{1,\ldots,
mh-1\}$ and $s<t$. Then by the above construction
$\dc\left(v_s^i\right)= v_{j_s}^{i+1}$, $\dc\left(v_t^i\right)=
v_{j_t}^{i+1}$ and
 $$
j_t\in \{1,\ldots,\ell_{i+1}\}\setminus
\bigcup_{\ell=1}^{t-1}A_\ell^i\subseteq
\{1,\ldots,\ell_{i+1}\}\setminus \bigcup_{\ell=1}^{s-1}A_\ell^i,
 $$
 yet $j_t\notin A_s^i$. The validity of~\eqref{eq:sep designated} now follows from the definition of $A_s^{i}$; see~\eqref{eq:def A r+1}.
\end{proof}


\section{An iterated H\"older argument for trees: proof of Lemma~\ref{lem:sparse-s-2}}
\label{sec:holder}

Our goal here is to prove Lemma~\ref{lem:sparse-s-2}. The heart of
this lemma is the extraction of a ``large" and ``sparsified" subtree
from any ``subadditive weighted tree with designated children"
(Definition~\ref{def:subadditive}). The resulting tree is depicted
in Figure~\ref{fig:RS-frag-2}.

\begin{figure}[ht]
\centering \fbox{
\begin{minipage}{6in}
\centering \includegraphics[scale=0.6,page=2]{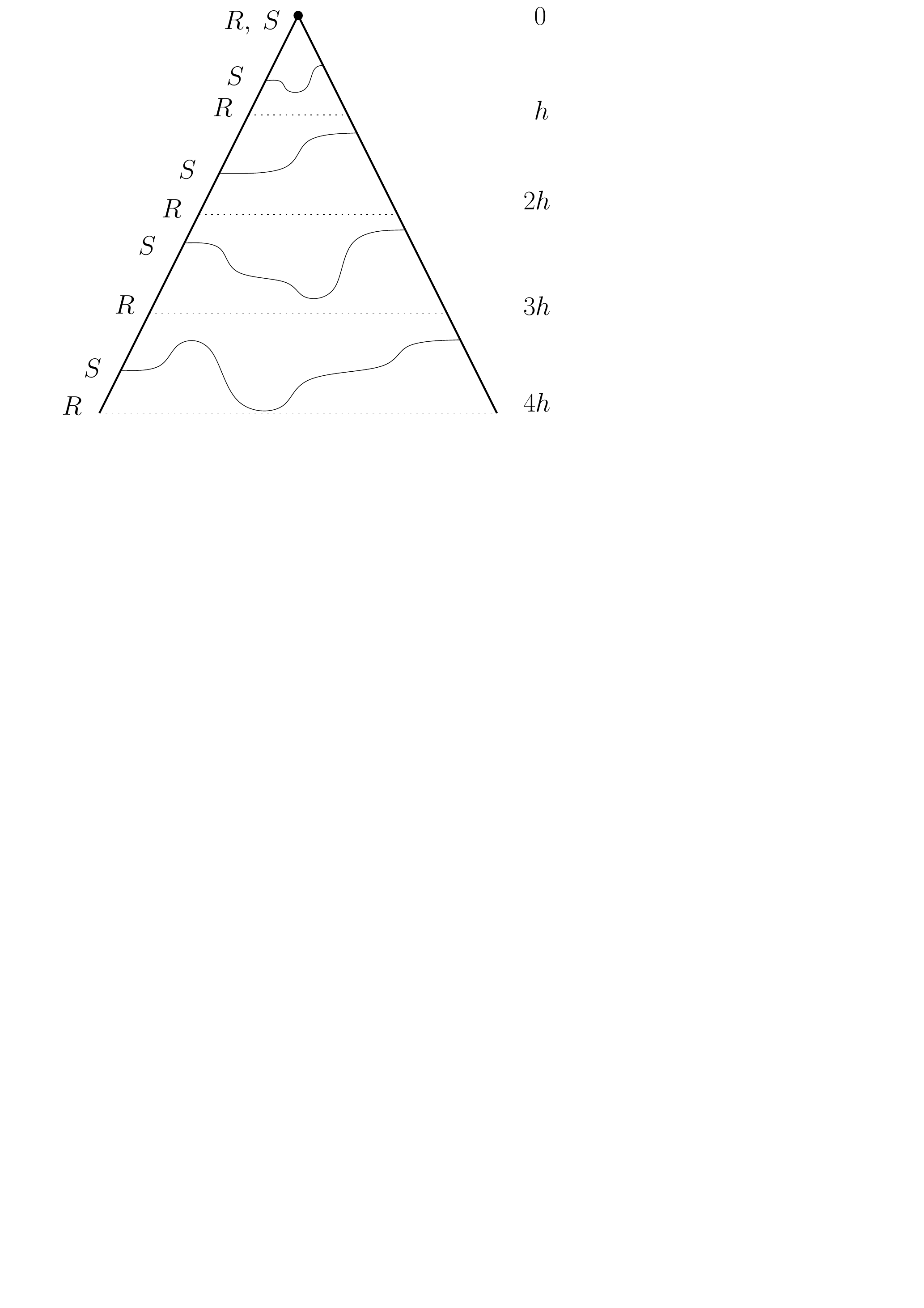}
\caption{A schematic depiction of the subtree $T'$ constructed in Lemma~\ref{lem:sparse-s-2}. The vertices of $R$ are
on the dotted lines and the vertices of $S$ are on the curved solid
lines. The vertices of
$S$ are $\beta$-separated. This is achieved by pruning all their
siblings, leaving each of them as the single offspring of its parent.}
\label{fig:RS-frag-2}
\end{minipage}
}
\end{figure}

%


\begin{definition}[Sparsified tree]\label{def:sparsified} Fix integers $h,k\ge 2$ and
let $(T,w,\dc)$ be a subadditive weighted tree with designated children. For $i\in \Z$ define a subtree $T^{(i)}$ of $T$ as follows.
\begin{equation}\label{eq:T^(i)}
T^{(i)}=T\setminus \left(\bigcup_{\substack{u\in T\\\depth_T(u)=i-1}}\bigcup_{v\in \p^{-1}(u)\setminus\{\dc(u)\}}T_v\right).
\end{equation}
 Thus $T^{(i)}$ is obtained from $T$ by removing all the subtrees rooted at vertices of depth $i$ that are not designated children. Note that by definition $T^{(i)}=T$ if either $i\le 0$ or $T$ has no vertices at depth $i$.
\end{definition}

 Lemma~\ref{lem:holder} below is
inspired by an argument in~\cite[Lem. 3.25]{BLMN05}, though our
assumptions, proof, and conclusion are different.

\begin{lemma}\label{lem:holder}
Fix $h,k\in \N$ satisfying $h\ge k\ge 2$ and let $(T,w,\dc)$ be a subadditive weighted tree with designated children. Assume that all the leaves of $T$ are at depth $h$. Then there exists $L\subseteq \{1,\ldots,h\}$ with $|L|\ge h-k+1$ such that for every $i\in L$ we have
$$
\sum_{\ell\in \L\left(T^{(i)}\right)} w(\ell)^{\frac{k-1}{k}}\ge w(r)^{\frac{k-1}{k}},
$$
where $r=r(T)$ is the root of $T$ and $T^{(i)}$ is as in Definition~\ref{def:sparsified}.
\end{lemma}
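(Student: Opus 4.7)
My plan is to reduce Lemma~3.11 to the following product inequality: for every choice of indices $1 \le i_1 < i_2 < \cdots < i_k \le h$,
\begin{equation*}
\prod_{j=1}^{k} S_{i_j} \ge w(r)^{k-1}, \qquad \text{where } S_i := \sum_{\ell\in\L(T^{(i)})} w(\ell)^{(k-1)/k}.
\end{equation*}
Once this inequality is established, the lemma is immediate by contradiction: if more than $k-1$ levels $i \in \{1,\ldots,h\}$ were ``bad''~---~meaning $S_i < w(r)^{(k-1)/k}$~---~then I could choose $k$ bad levels as $i_1,\ldots,i_k$ and obtain $\prod_{j} S_{i_j} < w(r)^{k(k-1)/k} = w(r)^{k-1}$, contradicting the displayed inequality.

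To prove the product inequality, I rely on two ingredients. The first is the super-additive bound $\mu(u)^{k/(k-1)} \ge w(u)$ for every $u \in T$, where $\mu := w_h^k$. This follows from the explicit formula $\mu(u) = \sum_{\ell\in T_u \cap \L(T)} w(\ell)^{(k-1)/k}$ (valid for every $u$ at positive depth; at the root one has $\mu(r)^{k/(k-1)} = w(r)$ by definition) together with the elementary inequality $(\sum_j a_j)^{k/(k-1)} \ge \sum_j a_j^{k/(k-1)}$ for non-negative $a_j$, which holds because $k/(k-1) \ge 1$; after applying this inequality one invokes the subadditivity of $w$ iterated down to the leaves. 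The second ingredient is the defining maximality of the designated child, $\mu(\dc(u)) = \max_{v \in \p^{-1}(u)} \mu(v) \ge \mu(v)$ for every sibling $v$.

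I would verify the base case $k=2$ directly. By expanding $S_{i_1} S_{i_2}$ and replacing $\mu(\dc(r))$ by the smaller quantity $\mu(v)$ for each depth-$(i_1-1)$ ancestor $v$, one obtains $S_{i_1} S_{i_2} \ge \sum_v \mu(v)\mu(\dc(v)) \ge \sum_v \sum_{w \in \p^{-1}(v)} \mu(w)^2$, using at the second step that $\mu(v) = \sum_w \mu(w) \ge \max_w \mu(w) = \mu(\dc(v))$ and hence $\mu(v)\mu(\dc(v)) \ge \sum_w \mu(w)^2$. Applying the super-additive bound $\mu(w)^2 \ge w(w)$ and iterating subadditivity of $w$ up the tree to reach $w(r)$ closes the base case. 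For the inductive step on $k$, one proceeds by the analogous root-to-leaves decomposition, extracting one factor of $\mu(\dc(r))$ to dominate each depth-$1$ vertex and reducing the remaining product to a $(k-1)$-fold product inside each subtree $T_v$, to which a suitably formulated inductive hypothesis applies; the super-additive bound then promotes the resulting sum over children back to $w(r)^{k-1}$.

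The hard part, I expect, is the bookkeeping in the inductive step: a naive ``keep-the-diagonal'' expansion of the product of sums can lose factors proportional to the branching, while the target inequality is tight on the extremal worst-case trees (e.g.\ $(k-1)$ successive levels of $d$-ary branching followed by chains, where $\prod_j S_{i_j} = w(r)^{k-1}$ exactly). To avoid this loss, the iterated H\"older step must be calibrated so that the exponent $k/(k-1)$ in the super-additive bound telescopes correctly against the exponent $(k-1)/k$ in the definition of $\mu$ across the $k$ levels; in practice this likely requires a slightly strengthened inductive hypothesis that carries an extra factor of $\mu$ at the root of each subtree, exactly absorbing the discrepancy between $\mu(u)$ in the ambient tree $T$ and the root-value $w(u)^{(k-1)/k}$ that a ``reset'' at $u$ would produce.
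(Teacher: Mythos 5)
Your reduction of the lemma to the product inequality $\prod_{j=1}^{k} S_{i_j} \ge w(r)^{k-1}$ for every $k$-element subset $\{i_1,\ldots,i_k\}\subseteq\{1,\ldots,h\}$ is exactly right, and the pigeonhole deduction of $|L|\ge h-k+1$ from it is correct. This is also the key inequality in the paper (applied at $u=r$). The two ingredients you isolate — the super-multiplicative bound $(w_h^k(u))^{k/(k-1)}\ge w(u)$ from iterated subadditivity, and the designated-child maximality — are also the right ones. So the plan is in the right neighborhood.

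The genuine gap is in the proof of the product inequality, and you partly flag it yourself. Induction on $k$ does not close: the modified weight $w_h^k$ carries the exponent $(k-1)/k$, so the $(k-1)$-version of the statement concerns $w_h^{k-1}$ with exponent $(k-2)/(k-1)$, and the exponents do not telescope. You correctly suspect that a ``strengthened inductive hypothesis carrying an extra factor of $\mu$'' is needed, but you never formulate it, and the base case $k=2$ as written does not parse: the claimed inequality $S_{i_1}S_{i_2}\ge\sum_v \mu(v)\mu(\dc(v))$ over depth-$(i_1-1)$ vertices $v$ is not justified, and the role of $\mu(\dc(r))$ in that step is unclear unless $1\in\{i_1,i_2\}$. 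The paper sidesteps all of this by fixing $k$ and doing reverse induction on $\depth_T(u)$: it defines $f_i(u)$ to be $w(u)^{(k-1)/k}$ at leaves, a \emph{maximum} over children when $i=\depth_T(u)+1$, and a \emph{sum} over children otherwise, and proves $\prod_{i\in H}f_i(u)\ge w(u)^{k-1}$ for every $H$ with $|H|=k$. At each node one factor is (or is dominated by) a max over children and the other $k-1$ are sums; one application of H\"older with exponent $k-1$ plus the max-domination collapses the product of sums into $\bigl(\sum_{v}\prod_{i\in H}f_i(v)^{1/(k-1)}\bigr)^{k-1}$, and the inductive hypothesis together with subadditivity of $w$ finishes the step. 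The closed form $f_i(u)=\sum_{\ell\in\L(T_u)}w(\ell)^{(k-1)/k}=w_h^k(u)$ for $i\le\depth_T(u)$ is exactly the ``extra factor of $\mu$ at the root of each subtree'' that you anticipate but do not supply; this is what makes the induction go through with $k$ fixed. Reformulating your argument as this depth-induction with the auxiliary $f_i$ would turn your sketch into the paper's proof.
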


\begin{proof} For $i\in \{1,\ldots,h\}$ and $u\in T$ define $f_i(u)\in (0,\infty)$ by reverse induction on $\depth_T(u)$ as follows. If $\depth_T(u)=h$, i.e., $u$ is a leaf of $T$, set
\begin{equation}\label{eq:def f_i leaf}
f_i(u)=w(u)^{\frac{k-1}{k}}.
\end{equation}
If $\depth_T(u)<h$ define recursively
\begin{equation}\label{def:f_i recurse}
f_i(u)=\left\{\begin{array}{ll} \max_{v\in \p^{-1}(u)} f_i(v) & \mathrm{if}\ i=\depth_T(u)+1,\\
\sum_{v\in \p^{-1}(u)} f_i(v)& \mathrm{if}\ i\neq \depth_T(u)+1.\end{array}\right.
\end{equation}

We observe that for all $i\in \{1,\ldots,h\}$ and $u\in T$ we have
\begin{equation}\label{eq:closed fourmula f_11}
i\le \depth_T(u)\implies f_i(u)=\sum_{\ell\in \L(T_u)} w(\ell)^{\frac{k-1}{k}},
\end{equation}
and
\begin{equation}\label{eq;closed formula f_i}
i> \depth_T(u)\implies f_i(u)=\sum_{\ell\in \L\left((T_u)^{(i)}\right)}w(\ell)^{\frac{k-1}{k}},
\end{equation}
where analogously to~\eqref{eq:T^(i)} we define
\begin{equation}\label{eq:T_u^(i)}
(T_u)^{(i)}=T_u \setminus\left(\bigcup_{\substack{a\in T\\\depth_T(a)=i-1}}\bigcup_{b\in \p^{-1}(a)\setminus\{\dc(a)\}}T_b\right).
\end{equation}
In other words, recalling that $T_u$ is the subtree of $T$ rooted at $u$, the subtree $(T_u)^{(i)}$  is obtained from $T_u$ by deleting all the subtrees rooted at vertices of depth $i$ that are not designated children (here depth is measured in $T$, i.e., the distance from the original root $r(T)$).

Identities~\eqref{eq:closed fourmula f_11} and~\eqref{eq;closed formula f_i} follow by reverse induction on $\depth_T(u)$ from the recursive definition of $f_i(u)$. Indeed, if $\depth_T(u)=h$ then~\eqref{eq;closed formula f_i} is vacuous and~\eqref{eq:closed fourmula f_11}  follows from~\eqref{eq:def f_i leaf}. Assume that $u\in T$ is not a leaf of $T$ and that~\eqref{eq:closed fourmula f_11} and~\eqref{eq;closed formula f_i} hold true for the children of $u$. If $i\le \depth_T(u)$ then by~\eqref{def:f_i recurse} and the inductive hypothesis we have
$$
f_i(u)=\sum_{v\in \p^{-1}(u)}f_i(v)= \sum_{v\in \p^{-1}(u)} \sum_{\ell\in \L(T_v)} w(\ell)^{\frac{k-1}{k}}=\sum_{\ell\in \L(T_u)} w(\ell)^{\frac{k-1}{k}}.
$$
If $i=\depth_T(u)+1$ then since we are assuming that~\eqref{eq:closed fourmula f_11} holds for each $v\in \p^{-1}(u)$,
\begin{multline*}
f_i(u)\stackrel{\eqref{def:f_i recurse}}{=}\max_{v\in \p^{-1}(u)} f_i(v)= \max_{v\in \p^{-1}(u)} \sum_{\ell\in \L(T_v)} w(\ell)^{\frac{k-1}{k}}
\stackrel{\eqref{eq:explicit w_h^k}}{=} \max_{v\in \p^{-1}(u)} w_h^k(v)\\\stackrel{\eqref{eq:max c}}{=}w_h^k(\dc(u))\stackrel{\eqref{eq:explicit w_h^k}}{=}
\sum_{\ell\in \L(T_{\dc(u)})} w(\ell)^{\frac{k-1}{k}}\stackrel{\eqref{eq:T_u^(i)}}{=}
\sum_{\ell\in \L\left((T_u)^{(i)}\right)}w(\ell)^{\frac{k-1}{k}}.
\end{multline*}
Finally, if $i>\depth_T(u)+1$ then we are assuming that~\eqref{eq;closed formula f_i} holds for each $v\in \p^{-1}(u)$, and therefore
$$
f_i(u)\stackrel{\eqref{def:f_i recurse}}{=}\sum_{v\in \p^{-1}(u)} f_i(v)\stackrel{\eqref{eq;closed formula f_i}}{=}\sum_{v\in \p^{-1}(u)} \sum_{\ell\in \L\left((T_v)^{(i)}\right)}w(\ell)^{\frac{k-1}{k}}\stackrel{\eqref{eq:T_u^(i)}}{=}
\sum_{\ell\in \L\left((T_u)^{(i)}\right)}w(\ell)^{\frac{k-1}{k}}.
$$
This completes the inductive verification of the identities~\eqref{eq:closed fourmula f_11} and~\eqref{eq;closed formula f_i}.

Our next goal is to prove by reverse induction on $\depth_T(u)$ that for every $H\subseteq \{1,\ldots,h\}$ with $|H|=k$ we have,
\begin{equation}\label{eq:product goal}
\prod_{i\in H} f_i(u)\ge w(u)^{k-1}.
\end{equation}
Indeed, if $\depth_T(u)=h$ then~\eqref{eq:product goal} holds as equality due to~\eqref{eq:def f_i leaf}. Assume inductively that $\depth_T(u)<h$ and that~\eqref{eq:product goal} holds for all the children of $u$. We claim that there exists $j\in H$ such that
\begin{equation}\label{eq:find j}
\prod_{i\in H} f_i(u)\ge \left(\max_{v\in \p^{-1}(u)} f_j(v)\right)\prod_{i\in H\setminus\{j\}} \left(\sum_{v\in \p^{-1}(u)} f_i(v)\right).
\end{equation}
Indeed, if $\depth_T(u)+1\in H$ then take $j=\depth_T(u)+1$ and note that~\eqref{eq:find j} holds as equality due to~\eqref{def:f_i recurse}. On the other hand, if $\depth_T(u)+1\notin H$ then let $j$ be an arbitrary element of $H$, and note that due to~\eqref{def:f_i recurse} we have
\begin{multline*}
\prod_{i\in H} f_i(u)=\left(\sum_{v\in \p^{-1}(u)} f_j(v)\right)\prod_{i\in H\setminus\{j\}} \left(\sum_{v\in \p^{-1}(u)} f_i(v)\right)\\\ge \left(\max_{v\in \p^{-1}(u)} f_j(v)\right)\prod_{i\in H\setminus\{j\}} \left(\sum_{v\in \p^{-1}(u)} f_i(v)\right),
\end{multline*}
as required. Now,
\begin{multline*}
\prod_{i\in H} f_i(u)\stackrel{(*)}{\ge} \left(\max_{v\in \p^{-1}(u)} f_j(v)\right)\left(\sum_{v\in \p^{-1}(u)} \prod_{i\in H\setminus\{j\}} f_i(v)^{\frac{1}{k-1}}\right)^{k-1}\\\ge \left(\sum_{v\in \p^{-1}(u)} \prod_{i\in H} f_i(v)^{\frac{1}{k-1}}\right)^{k-1}
\stackrel{(**)}{\ge} \left(\sum_{v\in \p^{-1}(u)} w(v)\right)^{k-1}\stackrel{\eqref{eq:subaddtitive}}{\ge} w(u)^{k-1},
\end{multline*}
where in $(*)$ we used~\eqref{eq:find j} and H\"older's inequality, and in $(**)$ we used the inductive hypothesis. This concludes the proof of~\eqref{eq:product goal}.

We are now in position to complete the proof of Lemma~\ref{lem:holder}. Set $H_1=\{1,\ldots,k\}$. Then
$$
\max_{i\in H_1} \sum_{\ell\in \L\left(T^{(i)}\right)} w(\ell)^{\frac{k-1}{k}}\ge \left(\prod_{i\in H_1} \sum_{\ell\in \L\left(T^{(i)}\right)} w(\ell)^{\frac{k-1}{k}}\right)^{\frac1{k}} \stackrel{\eqref{eq;closed formula f_i}}{=} \left(\prod_{i\in H_1} f_i(r)\right)^{\frac{1}{k}}\stackrel{\eqref{eq:product goal}}{\ge} w(r)^{\frac{k-1}{k}}.
$$
Hence there is $i_1\in H_1$ satisfying
$$
\sum_{\ell\in \L\left(T^{(i_1)}\right)} w(\ell)^{\frac{k-1}{k}}\ge w(r)^{\frac{k-1}{k}}.
$$
Now define $H_2=\left(H_1\setminus\{i_1\}\right)\cup\{k+1\}$ and repeat the above argument with $H_2$ replacing $H_1$. We deduce that there exists $i_2\in H_2$ satisfying
$$
\sum_{\ell\in \L\left(T^{(i_2)}\right)} w(\ell)^{\frac{k-1}{k}}\ge w(r)^{\frac{k-1}{k}}.
$$
We may repeat this process inductively $h-k+1$ times, and let $L=\{i_1,i_2,\ldots,i_{h-k+1}\}$.
\end{proof}

\begin{proof}[Proof of Lemma~\ref{lem:sparse-s-2}]
The proof is by induction on the
height of the tree, but we need the following strengthening of the
inductive hypothesis so as to deal with multiple trees. Suppose that
we are given a collection of (disjoint) subadditive weighted trees
with designated children $(T_1,w,\dc)\ldots,(T_\ell,w,\dc)$, each
$T_i$ is rooted at $r_i$ and all of them having the same height,
which is divisible by $h$ (formally we should denote the weighting
of $T_i$ by $w_i$, but since the trees are disjoint, denoting all
the weightings by $w$ will not create any confusion). We will prove
that there exists a subset $C\subseteq \{1,\ldots,\ell\}$ with the
following properties.
\begin{itemize}
\item For every $i\in C$ there is a subtree $T'_i$ of $T_i$ rooted at
$r_i$, and subsets $S_i, R_i\subseteq T'_i$, both containing the
root of $T_i'$, such that for any non-leaf $u\in
T_i'$ we have $\dc(u)\in T_i'$.
\item $R_i=\{v\in T_i':\ h\mid \depth_{T_i}(v)\}$.
\item For any non-leaf vertex $u\in R_i$,
\begin{equation}\label{eq:power-kidsi}
 \sum_{v\in D_{T_i'}(u,R_i)} w(v)^{\left(1-\frac{1}{k}\right)^2} \ge w(u)^{\left(1-\frac{1}{k}\right)^2} .
 \end{equation}
 \item For every $u,v\in R_i$ such that $\depth_{T_i}(v)=\depth_{T_i}(u)+h$ and $v$ is a descendant of $u$, there is one and only one $w\in S_i$ such that $w$ lies on the path joining $u$ and $v$ and $\depth_{T_i}(u)<\depth_{T_i}(w)\le \depth_{T_i}(v)$.
\item For any $u\in T'_i$ such that $D_{T'_i}(u,S_i)\neq\emptyset$, all the vertices of $D_{T_i'}(u,S_i)$ are at the same depth in $(T_i')_u$,
which is an integer between $1$ and $2h$.
\item $T_i'$ is sparsified at the subset $S_i$.
\item The vertices in $\bigcup_{i\in C} D_{T_i'}(r_i,S_i)$ have the
same depth (in their respective tree), regardless of $i$, and
\begin{equation}\label{eq:double power}
\sum_{i\in C} w(r_i)^{\left(1-\frac{1}{k}\right)^2}\ge
\left(\sum_{i=1}^\ell w(r_i)^{1-\frac{1}{k}}\right)^{1-\frac{1}{k}}.
\end{equation}
\end{itemize}
Note that Lemma~\ref{lem:sparse-s-2} is the case $\ell=1$ of this
statement, but it will be beneficial to prove the more general
statement as formulated above.

When the height of all the $T_i$ is $0$ we simply set
$C=\{1,\ldots,\ell\}$ and $T'_i=S_i=R_i=T_i$. Most of the above
conditions hold vacuously in this case (there are no non-leaf
vertices and $D_{T'_i}(u,S_i)=\emptyset$).
Condition~\eqref{eq:double power} follows from subadditivity of the
function $(0,\infty)\ni t\mapsto t^{1-\frac{1}{k}}$.

Assume next that the  leaves of $\{T_i\}_{i=1}^\ell$ are all at
depth $mh$ for some $m\in\mathbb N$. Let $\widehat T_j$ be the
 subgraph of $T_j$ induced on all the vertices of depth at most $h$ in
$T_j$. Note that by~\eqref{eq:explicit w_h^k} the restriction of the
induced weight function $w^k_h$ to $\widehat T_j$ coincides with the
corresponding weight function induced by the weighted tree
$\left(\widehat T_j,w\right)$, and consequently the same can be said
about the designated child map $\dc|_{\widehat T_j}$. Therefore, an
application of Lemma~\ref{lem:holder} to $\left(\widehat
T_j,w,\dc\right)$ yields a subset $L_j\subseteq \{1,\ldots,h\}$ with
$|L_j|= h-k+1$ such that for all $i\in L_j$ we have
\begin{equation}\label{eq:use the holder}
\sum_{\substack{u\in \widehat{T}_j^{(i)}\\ \depth_{T_j}(u)=h}}
w(u)^{1-\frac{1}{k}}\ge w(r_j)^{1-\frac1{k}},
\end{equation}
where $\widehat{T}_j^{(i)}$ is the subtree of $\widehat{T}_j$ that
is obtained by sparsifying the $i$th level as in
Definition~\ref{def:sparsified}.

Let $j_0\in \{1,\ldots,\ell\}$ satisfy
\begin{equation}\label{eq:def j_0}
w(r_{j_0})=\max_{j\in \{1,\ldots,\ell\}}w(r_j). \end{equation}
 For
$j\in \{1,\ldots,\ell\}$ denote $L_j'=L_j\cap L_{j_0}$. Then
\begin{equation}\label{eq:Lj' size}
|L_j'|=|L_j|+|L_{j_0}|-|L_j\cup L_{j_0}|\ge (h-k+1)+(h-k+1)-h=
h-2(k-1).
\end{equation}
If $\ell=1$ let $s_0\in L_{j_0}$ be an arbitrary integer in
$L_{j_0}$. If $\ell\ge 2$ let $s_0\in L_{j_0}$ satisfy
\begin{equation}\label{eq:def s_0}
\sum_{\substack{j\in \{1,\ldots,\ell\}\setminus \{j_0\}\\s_0\in
L_j'}} w(r_j)^{1-\frac1{k}}=\max_{s\in L_{j_0}} \sum_{\substack{j\in
\{1,\ldots,\ell\}\setminus \{j_0\}\\s\in L_j'}}
w(r_j)^{1-\frac1{k}}.
\end{equation}
By averaging we see that
\begin{multline}\label{eq:averaged lower}
\sum_{\substack{j\in \{1,\ldots,\ell\}\setminus \{j_0\}\\s_0\in L_j'}}
w(r_j)^{1-\frac1{k}}\ge \frac{1}{h-k+1}\sum_{s\in
L_{j_0}}\sum_{\substack{j\in \{1,\ldots,\ell\}\setminus \{j_0\}\\s\in
L_j'}} w(r_j)^{1-\frac1{k}}\\=\frac{1}{h-k+1} \sum_{j\in
\{1,\ldots,\ell\}\setminus
\{j_0\}}|L_j'|w(r_j)^{1-\frac1{k}}\stackrel{\eqref{eq:Lj'
size}}{\ge} \frac{h-2(k-1)}{h-(k-1)}\sum_{j\in
\{1,\ldots,\ell\}\setminus \{j_0\}}w(r_j)^{1-\frac1{k}}.
\end{multline}

Now define
\begin{equation}\label{eq:def C}
C=\left\{j\in \{1,\ldots,\ell\}: s_0\in L_j\right\}.
\end{equation}
We know that $j_0\in C$, since by construction  $s_0\in  L_{j_0}$.
By~\eqref{eq:Lj' size} we have $L'_j\neq\emptyset$ for every
$j\in\{1,\ldots,\ell\}$. Therefore~\eqref{eq:def s_0} implies  that
if $\ell\ge 2 $ then $|C|\ge 2$. Since~\eqref{eq:double power} is
trivial when $\ell=1$, we will now prove~\eqref{eq:double power}
assuming $\ell\ge 2$. By the choice of $j_0$ in~\eqref{eq:def j_0}
we know that for all $j\in \{1,\ldots,\ell\}\setminus \{j_0\}$ we
have
\begin{equation}\label{eq:half}
w\left(r_j\right)^{1-\frac1{k}}\le \frac12\sum_{i=1}^\ell
w(r_i)^{1-\frac{1}{k}}.
\end{equation}
This implies that for all $j\in \{1,\ldots,\ell\}\setminus \{j_0\}$,
\begin{equation}\label{eq:power of 2 appears}
w(r_j)^{\left(1-\frac{1}{k}\right)^2}\ge
\frac{h-(k-1)}{h-2(k-1)}\cdot
\frac{w(r_j)^{1-\frac{1}{k}}}{\left(\sum_{i=1}^\ell
w(r_i)^{1-\frac{1}{k}}\right)^{\frac{1}{k}}}.
\end{equation}
To check~\eqref{eq:power of 2 appears} note that it is equivalent to
the inequality
$$
\frac{h-(k-1)}{h-2(k-1)}\left(\frac{w(r_j)^{1-\frac{1}{k}}}{\sum_{i=1}^\ell
w(r_i)^{1-\frac{1}{k}}}\right)^{\frac{1}{k}}\le 1.
$$
By~\eqref{eq:half} and the fact that $h\ge 2k^2$, it suffices to
show that $(2k^2-k+1)/(2k^2-2k+2)\le 2^{1/k}$. Since $2^{1/k}\ge
1+1/(2k)$ it suffices to check that $2k(2k^2-k+1)\le
(2k+1)(2k^2-2k+2)$, which is immediate to verify.

Having established~\eqref{eq:power of 2 appears}, we proceed as
follows.
\begin{eqnarray*}
\sum_{j\in C}
w(r_j)^{\left(1-\frac{1}{k}\right)^2}&=&\frac{w(r_{j_0})^{1-\frac{1}{k}}}
{w(r_{j_0})^{\left(1-\frac{1}{k}\right)\frac{1}{k}}}+\sum_{j\in
C\setminus\{j_0\}}w(r_j)^{\left(1-\frac{1}{k}\right)^2}\\&\stackrel{\eqref{eq:power
of 2 appears}}{\ge}&
\frac{w(r_{j_0})^{1-\frac{1}{k}}}{\left(\sum_{i=1}^\ell
w(r_i)^{1-\frac{1}{k}}\right)^{\frac{1}{k}}}+\frac{h-(k-1)}{h-2(k-1)}\cdot\frac{\sum_{j\in
C\setminus \{j_0\}}w(r_j)^{1-\frac{1}{k}}}{\left(\sum_{i=1}^\ell
w(r_i)^{1-\frac{1}{k}}\right)^{\frac{1}{k}}}\\
&\stackrel{\eqref{eq:averaged lower}\wedge\eqref{eq:def C}}{\ge}&
\frac{w(r_{j_0})^{1-\frac{1}{k}}}{\left(\sum_{i=1}^\ell
w(r_i)^{1-\frac{1}{k}}\right)^{\frac{1}{k}}}+\frac{\sum_{j\in
\{1,\ldots,\ell\}\setminus
\{j_0\}}w(r_j)^{1-\frac1{k}}}{\left(\sum_{i=1}^\ell
w(r_i)^{1-\frac{1}{k}}\right)^{\frac{1}{k}}}\\
&=& \left(\sum_{i=1}^\ell
w(r_i)^{1-\frac{1}{k}}\right)^{1-\frac{1}{k}},
\end{eqnarray*}
completing the proof of~\eqref{eq:double power}.

We can now complete the proof of Lemma~\ref{lem:sparse-s-2} by
applying the inductive hypothesis. For every $j\in C$ let
$u_1^j,u_2^j,\ldots,u_{\ell_j}^j$ be the leaves of the tree
$\widehat T_j^{(s_0)}$, i.e., the subtree of $\hat T_j$ that was
sparsified at level $s_0$. Consider the subtrees of $T_j$ that are
rooted at $u_1^j,u_2^j,\ldots,u_{\ell_j}^j$, i.e.,
$(T_j)_{u_1^j},(T_j)_{u_2^j},\ldots,(T_j)_{u_{\ell_j}^j}$. By the
inductive hypothesis applied to these trees there exists a subset
$C_j\subseteq \{1,\ldots,\ell_j\}$ such that for each $i\in C_j$
there is a subtree $(T_j)_{u_i^j}'$ of $(T_j)_{u_i^j}$ and subsets
$S_{ji},R_{ji}\subseteq (T_j)_{u_i^j}'$ that satisfy the inductive
hypotheses.

Denote for $j\in C$,
$$
T_j'=\left(\bigcup_{i\in C_j} (T_j)_{u_i^j}'\right)\bigcup \left(\bigcup_{i\in C_j}\left\{u\in \widehat T_j^{(s_0)}:\ u\ \mathrm{ancestor\ of\ } u_i^j\right\}\right).
$$
Thus $T_j'$ is obtained by taking the subtree of $\widehat T_j^{(s_0)}$ whose leaves are $\left\{u_i^j\right\}_{i\in C_j}$, and replacing every leaf $u_i^j$ by the tree $(T_j)_{u_i^j}'$. We also define
$
R_j=\left(\bigcup_{i=1}^{\ell_j} R_{ji}\right)\bigcup \{r_j\},$
and
$$ S_j=
\left(\bigcup_{i=1}^{\ell_j} \left(S_{ji}\setminus\left\{u_i^j\right\}\right)\right)\bigcup\left\{u\in\widehat T_j^{(s_0)}:\ \depth_{T_j}(u)=s_0 \right\}\bigcup\{r_j\}.
$$
Note by the definition of $\widehat T_j^{(s_0)}$  every $u\in \widehat T_j^{(s_0)}$ with $\depth_{T_j}(u)=s_0$ has no siblings in $\widehat T_j^{(s_0)}$.

All the desired properties of $T_j',R_j,S_j$ follow immediately for the construction; only~\eqref{eq:power-kidsi} when $u=r_{j}$ requires justification as follows.
\begin{equation*}
\sum_{v\in D_{T_j'}(r_j,R_j)}w(v)^{\left(1-\frac{1}{k}\right)^2}=\sum_{i\in C_j} w\left(u_i^j\right)^{\left(1-\frac{1}{k}\right)^2}\stackrel{\eqref{eq:double power}}{\ge}\left(\sum_{i=1}^{\ell_j} w\left(u_i^j\right)^{1-\frac1{k}}\right)^{1-\frac1{k}}\stackrel{\eqref{eq:use the holder}}{\ge} w(r_j)^{\left(1-\frac{1}{k}\right)^2}.\qedhere
\end{equation*}
\end{proof}

\section{Proof of Theorem~\ref{lem:theta(D)}}
\label{sec:NT}

Here we prove Theorem~\ref{lem:theta(D)}, which is the last missing ingredient of the proof of
Theorem~\ref{thm:measure}. Theorem~\ref{lem:theta(D)} is a
weighted version of the results
of~\cite{MN07,NT-fragmentations}. Theorem~\ref{thm:two weight} below
follows from a slight modification of the argument
in~\cite{NT-fragmentations}, though it is not stated there
explicitly. We will therefore explain how~\cite{NT-fragmentations}
can be modified to deduce this statement. Alternatively, a similar
statement (with worse distortion bound) can be obtained by
natural modifications of the argument in~\cite{MN07}.

\begin{theorem}\label{thm:two weight}
Let $(X,d)$ be a finite metric space and $w_1,w_2:X\to [0,\infty)$
two nonnegative weight functions. Then for every $\e\in (0,1)$ there
exists a subset $S\subseteq X$ that embeds into an ultrametric space
with distortion
\begin{equation}\label{eq:dist bound}
D=\frac{2}{\e(1-\e)^\frac{1-\e}{\e}},
\end{equation}
and satisfying
\begin{equation}\label{eq:two weight}
\left(\sum_{x\in S} w_1(x)\right)\left(\sum_{x\in X}
w_2(x)\right)^\e\ge \sum_{x\in X} w_1(x)w_2(x)^\e.
\end{equation}
\end{theorem}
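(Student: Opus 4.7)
The plan is to adapt the argument of~\cite{NT-fragmentations} to the two-weight setting by induction on $|X|$; the base case $|X|=1$ is immediate with $S=X$, since~\eqref{eq:two weight} holds as equality. For the inductive step, set $\Delta=\diam(X)$ and $t=D/2$, and write $\sigma(Y)=\sum_{x\in Y}w_1(x)w_2(x)^\e$ for $Y\subseteq X$. The choice of $D$ in~\eqref{eq:dist bound} is equivalent to the identity $1/t=\e(1-\e)^{(1-\e)/\e}$, and it is this identity that the sharp padding step below will exploit.

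The key geometric ingredient is a two-weight padding lemma. I would show the existence of $x_0\in X$ and $r>0$ with $tr\le \Delta$ such that, setting $A=B(x_0,r)$, $E=X\setminus B(x_0,tr)$ and $C=X\setminus(A\cup E)$,
\[
\sigma(A)\Bigl(\Bigl(\tfrac{w_2(X)}{w_2(A)}\Bigr)^{\!\e}-1\Bigr)+\sigma(E)\Bigl(\Bigl(\tfrac{w_2(X)}{w_2(E)}\Bigr)^{\!\e}-1\Bigr)\ge \sigma(C),
\]
with the case $E=\emptyset$ handled by the obvious modification. Granting such a padded ball, I apply the inductive hypothesis to $(A,d|_A,w_1|_A,w_2|_A)$ and $(E,d|_E,w_1|_E,w_2|_E)$ to obtain $S_A\subseteq A$, $S_E\subseteq E$ and ultrametrics $\rho_A,\rho_E$ of distortion $\le D$. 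The output subset is $S=S_A\cup S_E$, equipped with the ultrametric $\rho$ that agrees with $\rho_A,\rho_E$ on $S_A,S_E$ after truncation via Lemma~\ref{lem:truncation}, and that equals a common value $M$ on every cross pair, with $M$ chosen appropriately in terms of $r,\Delta,D$. Since cross pairs $(a,e)\in S_A\times S_E$ satisfy $(t-1)r\le d(a,e)\le \Delta$, one verifies directly that $\rho$ is an ultrametric of distortion $\le D$; adding the two inductive inequalities and invoking the displayed padding inequality yields~\eqref{eq:two weight} for $S$.

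The existence of a padded scale is established by a geometric averaging argument over radii $r_j=\Delta/t^j$. A reference point $x_0$ is fixed (for instance to maximize the $w_2$-mass of a suitable ball), and an iterated H\"older inequality applied to the ``annular profiles'' $j\mapsto\sigma(C_j)$ and $j\mapsto w_2(A_j)$ (analogous in spirit to Lemma~\ref{lem:holder} of the present paper) produces some scale $j$ that satisfies the required inequality. The substitution scheme relative to the one-weight argument of~\cite[Sec.~2--3]{NT-fragmentations} is simple: the one-weight mass $w(\cdot)$ is replaced throughout by $w_1(\cdot)w_2(\cdot)^\e$ on the ``mass'' side, while normalization is performed against $w_2(\cdot)$. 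It is precisely the H\"older step with these two weights and exponent $\e$ that produces the sharp constant $1/t=\e(1-\e)^{(1-\e)/\e}$.

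The main obstacle is verifying that each averaging/H\"older step of~\cite{NT-fragmentations} remains tight under this two-weight substitution, and in particular that the combinatorial scale-selection mechanism that enables the iterated H\"older argument at successive depths of the recursion goes through without loss. This is the ``slight modification'' alluded to in the paragraph preceding the theorem statement, and the bookkeeping---specifically, checking that the optimal exponent in the padding inequality is still $\e$ when one tracks $w_1(\cdot)w_2(\cdot)^\e$ and $w_2(\cdot)$ in tandem---constitutes the bulk of the technical work, but no new conceptual ingredient beyond the one-weight argument is needed.
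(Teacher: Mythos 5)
Your proposal and the paper's proof diverge in a way that is not cosmetic: the paper's argument is probabilistic while yours is deterministic, and I do not believe the deterministic route reaches the sharp constant in~\eqref{eq:dist bound}.

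The paper proves Theorem~\ref{thm:two weight} by taking a sequence of \emph{random} radii $r_0\ge r_1\ge\cdots$ (independent of the geometry of $X$) satisfying $\Pr\bigl[r_n<r\le r_n+\tfrac{2r_{n-1}}{D}\bigr]\le\e$ for every fixed $r>0$, running the scale-by-scale fragmentation of~\cite{NT-fragmentations} with these radii and the deformed density $f_n$ in~\eqref{eq:def f_n}, and then extracting the weight estimate~\eqref{eq:two weight} \emph{in expectation} by Jensen's inequality and Fubini (equations~\eqref{eq:jensen}--\eqref{eq:fubini identity}). The constant $D=2/\bigl(\e(1-\e)^{(1-\e)/\e}\bigr)$ is exactly the threshold at which such random radii exist; this is the content of~\cite[Thm.~1.5]{NT-fragmentations}, and it is where the sharp constant comes from. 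No single good radius is ever exhibited; the existence of a good $S$ is guaranteed because the expectation of $\sum_{x\in S}w_1(x)$ is large enough.

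Your plan instead fixes a deterministic good scale at each step of a ball-and-complement recursion, with the existence of the padded scale established by an iterated H\"older argument over annular profiles. This is the shape of the argument in~\cite{BLMN05} (compare~\cite[Lem.~3.25]{BLMN05} and Lemma~\ref{lem:holder} of this paper, which serve a different purpose in our proof than the one you assign them). That deterministic mechanism is known to lose a logarithmic factor: it yields ultrametric distortion $O\bigl(\e^{-1}\log(2/\e)\bigr)$ rather than $O(1/\e)$, as recorded in the paragraph following Theorem~\ref{thm:1/eps}. The improvement to $O(1/\e)$, and a fortiori to the sharp $2/\bigl(\e(1-\e)^{(1-\e)/\e}\bigr)$, was obtained in~\cite{MN07,NT-fragmentations} precisely by replacing the deterministic scale selection with a randomized one and arguing in expectation. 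Your sentence ``It is precisely the H\"older step with these two weights and exponent $\e$ that produces the sharp constant'' therefore misattributes the source of the constant: in~\cite{NT-fragmentations} there is no iterated H\"older step; the constant comes out of the admissibility condition~\eqref{eq:addmissible} for the random radii together with Jensen.

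Concretely, the gap is the unproved two-weight padding lemma at scale $t=D/2$. The ultrametric gluing forces $r$ to lie in the window $\Delta/(D(t-1))\le r\le\Delta/t$; within this window, for a shortest-path metric on a bounded-degree expander (the extremal example for this theorem, see Section~\ref{sec:expander}) every ball $B(x_0,tr)$ with $tr<\Delta$ has size polynomially small in $n$, so neither $w_2(A)$ nor $w_2(X\setminus B(x_0,tr))$ can be forced close to $w_2(X)$ at a single scale, and the inequality
\[
\sigma(A)\Bigl(\bigl(\tfrac{w_2(X)}{w_2(A)}\bigr)^{\e}-1\Bigr)+\sigma(E)\Bigl(\bigl(\tfrac{w_2(X)}{w_2(E)}\bigr)^{\e}-1\Bigr)\ge\sigma(C)
\]
cannot be produced at the sharp $t$ by a pigeonhole over the $O(\log_t(D(t-1)))$ admissible scales without losing a $\log(1/\e)$ factor. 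If you want a deterministic proof, you should expect to recover only the~\cite{BLMN05} bound; to obtain~\eqref{eq:dist bound} you need the randomized scale-oblivious radii and the expectation/Jensen computation as in Section~\ref{sec:NT}.
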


Theorem~\ref{thm:two weight} implies the finite nonlinear Dvoretzky theorem (Theorem~\ref{thm:1/eps}) in the special case $w_1=w_2=1$.
Theorem~\ref{lem:theta(D)} follows by taking $w_1=w^{1-\e}$ and $w_2=w$.
In this case conclusion~\eqref{eq:two weight} becomes
\begin{equation}\label{eq:BLMN weighted}
\sum_{x\in S} w(x)^{1-\e}\ge \left(\sum_{x\in X}w(x)\right)^{1-\e}.
\end{equation}
This type of requirement was studied in~\cite{BLMN05} under the name of ``the weighted metric Ramsey problem", where is was shown that there always exists $S\subseteq X$ satisfying~\eqref{eq:BLMN weighted} that embeds into an ultrametric space with distortion $O\left(\e^{-1}\log(2/\e)\right)$.

\begin{proof}[Proof of Theorem~\ref{thm:two weight}]
The beginning of the argument is most natural to state in the
context of general metric measure spaces $(X,d,\mu)$. So, assume
that $(X,d,\mu)$ is a metric measure space; we will later specialize
the discussion to the case of finite spaces.

Let $f:X\to [0,\infty)$ be a nonnegative Borel measurable function.
Lemma~2.1 of~\cite{NT-fragmentations} states that for every compact
$S\subseteq X$ and every $R>r>0$ there exists a compact subset
$T\subseteq S$ satisfying
\begin{equation}\label{eq:NT for iteration}
\int_T\frac{\mu(B(x,R))}{\mu(B(x,r))}f(x)d\mu(x)\ge
\int_{S}fd\mu,
\end{equation}
such that $T$ can be partitioned as $T = \bigcup_{n=1}^\infty T_n$,
where each (possibly empty) $T_n$ is compact and contained in a ball
of radius $r$, and any two non-empty $T_n, T_m$ are separated by a
distance of at least $R-r$.

Fix a nonnegative Borel measurable $w\in L_1(\mu)$. Iterate the
above statement as follows; the same iteration is carried out for
the special case $w=1$ in Lemma~2.2 of~\cite{NT-fragmentations}.
Assume that we are given a non-increasing sequence of positive
numbers $R=r_0\ge r_1\ge r_2\ge \cdots>0$ converging to zero. Assume
also that $\diam(X)\le 2R$. For $n\in \N$ define $f_n:X\to
[0,\infty)$ by
\begin{equation}\label{eq:def f_n}
f_n(x)=\left(\prod_{m =n}^\infty \frac{\mu(B(x,r_m))}
{\mu\left(B\left(x,r_m +
\frac{2r_{m-1}}{D}\right)\right)}\right)w(x),
\end{equation}
Where $D$ be given
by~\eqref{eq:dist bound}. Note that $0\le f_n\le w$ for all $n\in \N$.  Assume that we already defined a compact
subset $S_{n-1}\subseteq X$. An application of Lemma~2.1
of~\cite{NT-fragmentations}, with radii $r_n+\frac{2r_{n-1}}{D}>r_n$
and weight function $f_n$, yields a compact subset $S_n\subseteq
S_{n-1}$ satisfying
$$
\int_{S_n}f_{n+1}d\mu=
\int_{S_n}\frac{\mu\left(B\left(x,r_n+\frac{2r_{n-1}}{D}\right)\right)}{\mu(B(x,r_n))}f_n(x)d\mu(x)
\stackrel{\eqref{eq:NT for iteration}}{\ge} \int_{S_{n-1}} f_nd\mu.
$$
Hence for all $n\in \N$ we have,
$$
\int_{S_n}wd\mu\ge \int_Xf_1d\mu.
$$
Consider the compact subset $S=\bigcap_{n=1}^\infty S_n$. By the
dominated convergence theorem,
\begin{equation}\label{eq:int S}
\int_{S}wd\mu\ge \int_Xf_1d\mu.
\end{equation}
In~\cite[Lem.~2.2]{NT-fragmentations} it is shown that $S$
embeds with distortion $D$ into an ultrametric space.

Assume now that the radii $1=r_0\ge r_1\ge r_2\ge \cdots>0$ are random variables satisfying $\lim_{n\to \infty} r_n=0$ and for every real number $r>0$
\begin{equation}\label{eq:addmissible}
\Pr\left[r_n<r\le r_n+\frac{2r_{n-1}}{D}\right]\le \e.
\end{equation}
For the existence of such random variables, as well as the optimality for this purpose of the choice of $D$ in~\eqref{eq:dist bound}, see~\cite[Thm.~1.5]{NT-fragmentations}. Specializing to the case of a finite metric measure space $(X,d,\mu)$ of diameter at most $2$,  apply~\eqref{eq:int S} when $w(x)=w_1(x)/\mu(\{x\})$ and $w_2(x)=\mu(\{x\})$, and the radii are the random radii chosen above. By taking expectation of the resulting (random) inequality and using Jensen's inequality we arrive at the following estimate.
\begin{equation}\label{eq:jensen}
\E\left[\sum_{x\in S} w_1(x)\right]\stackrel{\eqref{eq:def f_n}\wedge\eqref{eq:int S}}{\ge} \sum_{x\in X} w_1(x)\exp\left(\E\left[\sum_{n=1}^\infty\log\left(\frac{\mu(B(x,r_m))}
{\mu\left(B\left(x,r_m +
\frac{2r_{m-1}}{D}\right)\right)}\right)\right]\right).
\end{equation}

For every $x\in X$ let $0=t_1(x)<t_2(x)<\ldots<t_{k(x)}(x)$ be the radii at which $\mu(B(x,t))$ jumps, i.e.,
 $\mu(\{x\})=\mu(B(x,t_1(x)))<\mu(B(x,t_2(x)))<\ldots<\mu(B(x,t_{k(x)}(x)))=\mu(X)$, and
$B(x,t)=B(x,t_j(x))$ if $t_j(x)\le t<t_{j+1}(x)$ (where we use the
convention $t_{k(x)+1}(x)=\infty$). Then we have the following straightforward identity (see equation (15) in~\cite{NT-fragmentations}), which holds for every $x\in X$.
\begin{multline}\label{eq:fubini identity}
\E\left[\sum_{n=1}^\infty\log\left(\frac{\mu(B(x,r_m))}
{\mu\left(B\left(x,r_m +
\frac{2r_{m-1}}{D}\right)\right)}\right)\right]\\=-\sum_{j=2}^{k(x)}\left(\sum_{n=1}^\infty\Pr\left[r_n < t_j(x) \leq r_n + \frac{2r_{n-1}}{D}\right]\right)\log\left(\frac{\mu\left(B(x,t_j(x))\right)}{\mu\left(B(x,t_{j-1}(x))\right)}\right).
\end{multline}
Hence,
\begin{multline*}\label{eq:expectation}
\E\left[\sum_{x\in S} w_1(x)\right]\stackrel{\eqref{eq:addmissible}\wedge\eqref{eq:jensen}\wedge\eqref{eq:fubini identity}}{\ge} \sum_{x\in X} w_1(x) \exp\left(-\e\sum_{j=2}^{k(x)}\log\left(\frac{\mu\left(B(x,t_j(x))\right)}
{\mu\left(B(x,t_{j-1}(x))\right)}\right)\right)\\
=\sum_{x\in X} w_1(x) \left(\frac{\mu(\{x\})}{\mu(X)}\right)^{\e}=\frac{\sum_{x\in X} w_1(x)w_2(x)^\e}{\left(\sum_{y\in X}w_2(y)\right)^\e}.
\end{multline*}
We have shown that the required estimate~\eqref{eq:two weight} holds in expectation for our random subset $S\subseteq X$, completing the proof of Theorem~\ref{thm:two weight}.
\end{proof}


\section{Impossibility results} \label{sec:ub}

The purpose of this section is to prove the second part of Theorem~\ref{thm:hausdorff into} and Theorem~\ref{thm:<2}. In both cases the goal is to construct a metric space having the property that all its ``almost Euclidean" subsets have small Hausdorff dimension. We will do so by gluing together the finite examples from~\cite{BLMN05}: in the high distortion regime corresponding to Theorem~\ref{thm:hausdorff into} these building blocks are expander graphs, and in the low distortion regime corresponding to Theorem~\ref{thm:<2} these building blocks  arise from dense random graphs. The gluing procedure, which is an infinitary variant of the ``metric composition" method from~\cite{BLMN05}, starts with a sequence of finite metric spaces and joins them in a tree-like fashion. The details of the construction are contained in Section~\ref{sec:trees of metrics} below, and the specializations to prove Theorem~\ref{thm:hausdorff into} and Theorem~\ref{thm:<2} are described in Section~\ref{sec:expander} and Section~\ref{sec:random graph}, respectively.

\subsection{Trees of metric spaces}\label{sec:trees of metrics}

Fix $\{n_k\}_{k=0}^\infty\subseteq \N$ with $n_0=1$ and $n_k>1$ for $k\ge 1$. Fix also $\{\delta_k\}_{k=1}^\infty\subseteq (0,\infty)$. Assume that for each $k\in \N$ we are given a metric $d_k$ on $\{1,\ldots,n_k\}$ with
\begin{equation}\label{eq:normalizations}
\diam_{d_k}\left(\{1,\ldots,n_k\}\right)=1\quad\mathrm{and}\quad \min_{i,j\in \{1,\ldots,n_k\}} d_k(i,j)=\delta_k.
\end{equation}
For distinct $x=(x_k)_{k=1}^\infty,y=(y_k)_{k=1}^\infty\in \prod_{k=1}^\infty\{1,\ldots,n_k\}$ let $k(x,y)$ be the smallest $k\in \N$ such that $x_k\neq y_k$. For $\alpha\in (0,\infty)$ define
\begin{equation}\label{eq:def rho alpha}
\rho_\alpha(x,y)=\frac{d_{k(x,y)}\left(x_{k(x,y)},y_{k(x,y)}\right)}{\prod_{i=0}^{k(x,y)-1}n_i^{1/\alpha}}.
\end{equation}
Also, set $\rho_\alpha(x,x)=\rho_\alpha(y,y)=0$.

\begin{remark}\label{rem:visualize}
One can visualize the above construction as follows. Let $T$ be the infinite rooted tree such for $i\ge 0$ each vertex at depth $i$ in $T$ has exactly  $n_{i+1}$ children. Then $\prod_{k=1}^\infty\{1,\ldots,n_k\}$ can be identified with the set of all infinite branches of $T$. With this identification, the distance $\rho_\alpha$ has the following meaning: given two infinite branches in $T$, find the vertex $v\in T$ at which they split (i.e., their deepest common vertex). Say that the depth of $v$ is $i-1$. The metric $d_i$ induces a metric space structure on the $n_{i}$ children of $v$, and the distance between the two given branches is a multiple of the distance between the two children of $v$ that belong to these branches.
\end{remark}

\begin{lemma}\label{lem:is compact}
$\left(\prod_{k=1}^\infty\{1,\ldots,n_k\},\rho_\alpha\right)$ is a compact metric space provided that
\begin{equation}\label{eq:delta_k condition}
\forall k\in \N,\quad \delta_k>\frac{1}{n_k^{1/\alpha}}.
\end{equation}
\end{lemma}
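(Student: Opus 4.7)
The plan is to establish in turn that $\rho_\alpha$ is a metric and that the resulting space is compact. Nonnegativity and symmetry follow immediately from \eqref{eq:def rho alpha}, and positivity for $x\ne y$ is a direct consequence of \eqref{eq:normalizations}, which forces $d_{k(x,y)}(x_{k(x,y)},y_{k(x,y)})\ge \delta_{k(x,y)}>0$. The real work is verifying the triangle inequality.

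For the triangle inequality, given distinct $x,y,z$ I would set $i=k(x,y)$, $j=k(y,z)$, $\ell=k(x,z)$ and assume without loss of generality that $i\le j$. Inspection of level $\min(i,j)$ in the three sequences shows that only three configurations are possible: (a)~$i=j=\ell$; (b)~$i=j<\ell$ (so $x_i\ne y_i$, $y_i\ne z_i$, but $x_i=z_i$); or (c)~$i=\ell<j$ (so $y_i=z_i$). Case~(a) reduces, upon dividing by the common factor $\prod_{m=0}^{i-1}n_m^{1/\alpha}$, to the triangle inequality for $d_i$. Case~(c) is trivial because $y_i=z_i$ forces $d_i(x_i,z_i)=d_i(x_i,y_i)$, hence $\rho_\alpha(x,z)=\rho_\alpha(x,y)$.

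The hard part will be case~(b), and this is precisely where hypothesis \eqref{eq:delta_k condition} enters. Since $\ell\ge i+1$ and $\diam_{d_\ell}\left(\{1,\ldots,n_\ell\}\right)=1$, the definition \eqref{eq:def rho alpha} gives
\[
\rho_\alpha(x,z)\le \frac{1}{\prod_{m=0}^{\ell-1}n_m^{1/\alpha}}\le \frac{1}{n_i^{1/\alpha}\prod_{m=0}^{i-1}n_m^{1/\alpha}},
\]
while \eqref{eq:normalizations} and \eqref{eq:delta_k condition} yield
\[
\rho_\alpha(x,y)\ge \frac{\delta_i}{\prod_{m=0}^{i-1}n_m^{1/\alpha}}>\frac{1}{n_i^{1/\alpha}\prod_{m=0}^{i-1}n_m^{1/\alpha}}\ge \rho_\alpha(x,z),
\]
so the triangle inequality is immediate. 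Intuitively, \eqref{eq:delta_k condition} says that splitting at a shallow level of the tree $T$ described in Remark~\ref{rem:visualize} is always more costly than any distance remaining within a common deeper cylinder, which is exactly what prevents the triangle inequality from collapsing.

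For compactness I would identify the $\rho_\alpha$-topology with the product topology. Setting $r_N=1/\prod_{m=0}^{N}n_m^{1/\alpha}$, the same computation as in case~(b) shows that the closed ball $B_{\rho_\alpha}(x,r_N)$ coincides with the $N$-cylinder $\{y:y_m=x_m\text{ for }1\le m\le N\}$: points in the cylinder satisfy $k(x,y)\ge N+1$ and hence have $\rho_\alpha$-distance at most $r_N$, while points outside split from $x$ at some level $k\le N$ and thereby have $\rho_\alpha$-distance strictly exceeding $r_N$ by \eqref{eq:delta_k condition}. Since $n_m\ge 2$ for $m\ge 1$ forces $r_N\to 0$, these balls form a neighborhood base at each point in the metric topology, and they likewise form a base of the product topology, so the two topologies coincide. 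Compactness then follows from Tychonoff's theorem applied to the countable product of finite discrete spaces $\{1,\ldots,n_k\}$.
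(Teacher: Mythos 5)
Your proof is correct and follows essentially the same route as the paper: case analysis on the three splitting levels for the triangle inequality, with hypothesis~\eqref{eq:delta_k condition} doing the work exactly in the case where $k(x,z)$ is strictly largest, and compactness via identifying the $\rho_\alpha$-topology with the product topology and applying Tychonoff. Your write-up is actually slightly more careful: the WLOG $k(x,y)\le k(y,z)$ cleanly enumerates all three configurations, you correctly note that in your case (c) one gets $\rho_\alpha(x,z)=\rho_\alpha(x,y)$ (the paper's corresponding case contains a small typo, writing $\rho_\alpha(x,z)=\rho_\alpha(x,y)$ where it should read $\rho_\alpha(x,z)=\rho_\alpha(y,z)$), and you spell out the ball-equals-cylinder computation that the paper leaves implicit when asserting that $\rho_\alpha$ induces the product topology.
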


\begin{proof}
Take $x,y,z\in \prod_{k=1}^\infty\{1,\ldots,n_k\}$. If $k(x,y)=k(y,z)=k(x,z)=k$
 then because $d_k$ satisfies the triangle inequality, $\rho_\alpha(x,z)\le \rho_\alpha(x,y)+\rho_\alpha(y,z)$. If $k(x,y)>k(x,z)$ then necessarily $k(x,z)=k(y,z)=k$ and $x_k=y_k$. Hence $\rho_\alpha(x,z)=\rho_\alpha(x,y)\le \rho_\alpha(x,y)+\rho_\alpha(y,z)$. The remaining case $k(x,z)>k(x,y)$ is dealt with as follows.
\begin{equation*}
\rho_\alpha(x,z)=\frac{d_{k(x,z)}\left(x_{k(x,z)},z_{k(x,z)}\right)}{\prod_{i=0}^{k(x,z)-1}n_i^{1/\alpha}}
\stackrel{\eqref{eq:normalizations}}{\le}\frac{1}{\prod_{i=0}^{k(x,z)-1}n_i^{1/\alpha}}\stackrel{\eqref{eq:delta_k condition}}{\le} \frac{\delta_{k(x,y)}}{\prod_{i=0}^{k(x,y)-1}n_i^{1/\alpha}}\stackrel{\eqref{eq:normalizations}}{\le} \rho_\alpha(x,y).
\end{equation*}
This proves the triangle inequality. Compactness follows from Tychonoff's theorem since $\rho_\alpha$ induces the product topology on $\prod_{k=1}^\infty\{1,\ldots,n_k\}$. 
\end{proof}

\begin{lemma}\label{lem:compute hausdorff}
Assume that in addition to~\eqref{eq:delta_k condition} we have
\begin{equation}\label{eq:limit}
\lim_{k\to \infty}\frac{\log(1/\delta_k)}{\sum_{i=1}^{k-1}\log n_i}=0.
\end{equation}
Then
$$
\dim_H\left(\prod_{k=1}^\infty\{1,\ldots,n_k\},\rho_\alpha\right)=\alpha.
$$
\end{lemma}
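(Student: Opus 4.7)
The plan is to establish the matching bounds $\dim_H\le\alpha$ and $\dim_H\ge\alpha$ by exploiting the natural hierarchical structure of $X\eqdef\prod_{k=1}^\infty\{1,\dots,n_k\}$ described in Remark~\ref{rem:visualize}. For $k\ge 0$, call the set of sequences agreeing with a fixed $x\in X$ on the first $k$ coordinates the \emph{depth-$k$ cylinder} at $x$; there are $N_k\eqdef\prod_{i=1}^k n_i$ such cylinders partitioning $X$, and by~\eqref{eq:def rho alpha} combined with~\eqref{eq:normalizations} each has $\rho_\alpha$-diameter at most $N_k^{-1/\alpha}$.

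For the upper bound I take any $\beta>\alpha$ and cover $X$ by its $N_k$ depth-$k$ cylinders, producing
$$\sum_{\text{cylinders}}(\diam)^\beta\ \le\ N_k\cdot N_k^{-\beta/\alpha}=N_k^{1-\beta/\alpha}\xrightarrow[k\to\infty]{}0,$$
since $n_i\ge 2$ forces $N_k\to\infty$ and $1-\beta/\alpha<0$. The cylinder diameters themselves tend to zero, so this is an admissible sequence of covers, $\CH^\beta(X)=0$, and hence $\dim_H(X)\le\alpha$.

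For the reverse inequality I would apply the mass distribution principle (see~\cite{Mattila}) to the product probability measure $\mu$ on $X$ induced by the uniform distributions on each factor, under which every depth-$k$ cylinder has mass $1/N_k$. The key geometric lemma is that the quantity $s_k\eqdef\delta_k/\prod_{i=0}^{k-1} n_i^{1/\alpha}$, i.e., the minimum $\rho_\alpha$-distance between sequences that first differ at coordinate $k$, is strictly decreasing in $k$. This follows at once from~\eqref{eq:delta_k condition}: $s_{k+1}/s_k=\delta_{k+1}/(\delta_k n_k^{1/\alpha})<1$ because $\delta_{k+1}\le 1<\delta_k n_k^{1/\alpha}$. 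Monotonicity of $(s_k)$ implies that whenever $\rho_\alpha(x,y)<s_k$ the sequences $x,y$ must agree on coordinates $1,\dots,k$, so for any $r\in[s_{k+1},s_k)$ (with the convention $s_0=\infty$) the ball $B(x,r)$ lies inside a single depth-$k$ cylinder and $\mu(B(x,r))\le 1/N_k$.

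To finish, I fix $\beta\in(0,\alpha)$ and verify the Frostman-type estimate $\mu(B(x,r))\le C r^\beta$ for a suitable $C=C(\beta)$. For $r\in[s_{k+1},s_k)$ this reduces, after taking logarithms, to
$$\Bigl(1-\tfrac{\beta}{\alpha}\Bigr)\sum_{i=1}^k\log n_i\ \ge\ \beta\log(1/\delta_{k+1})-\log C,$$
which holds with $C=1$ for every sufficiently large $k$ because hypothesis~\eqref{eq:limit} says precisely that $\log(1/\delta_{k+1})=o\!\left(\sum_{i=1}^k\log n_i\right)$, while $\sum_{i=1}^k\log n_i\to\infty$ (as $n_i\ge 2$); the finitely many smaller values of $k$ (including $k=0$, which demands only $C\ge\delta_1^{-\beta}$) are absorbed by enlarging $C$. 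The mass distribution principle then gives $\CH^\beta(X)>0$, hence $\dim_H(X)\ge\beta$, and letting $\beta\uparrow\alpha$ completes the proof. The only nontrivial step is the monotonicity of $(s_k)$; everything else is direct bookkeeping with the scaling built into~\eqref{eq:def rho alpha}.
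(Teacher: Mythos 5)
Your proof is correct. The upper bound $\dim_H\le\alpha$ is identical to the paper's: cover by depth-$k$ cylinders and let $k\to\infty$.

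For the lower bound you take a genuinely different route. The paper proves a combinatorial inequality (its display~\eqref{eq:prem}) asserting that any cover of the space by cylinder sets $B_{x^j}^{k_j}$ satisfies $\sum_j\prod_{i=0}^{k_j}n_i^{-1}\ge 1$, by an induction on a minimal such cover; it then converts an arbitrary cover by $\rho_\alpha$-balls into a cylinder cover via a case analysis on the radii (its~\eqref{eq:def r_j^*}), and finally invokes hypothesis~\eqref{eq:limit} to convert $\sum_j (r_j^*)^\alpha\ge 1$ into a lower bound on $\sum_j r_j^\beta$ for $\beta<\alpha$. You instead invoke the mass distribution principle for the uniform product measure $\mu$ (each depth-$k$ cylinder gets mass $N_k^{-1}$), establish the key geometric fact that $s_k=\delta_k/\prod_{i=0}^{k-1}n_i^{1/\alpha}$ is decreasing so that $\rho_\alpha$-balls of radius $r\in[s_{k+1},s_k)$ lie in a single depth-$k$ cylinder, and verify the Frostman exponent $\beta$ bound from~\eqref{eq:limit}. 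This is exactly the ``less direct'' alternative the paper itself flags in Remark~\ref{rem:frostman}; the trade-off is that your argument is a standard Frostman computation but requires knowing the product measure exists as a Borel measure (Kolmogorov/Carath\'eodory), whereas the paper's direct approach is measure-free and fully self-contained at the cost of the induction on minimal cylinder covers. Your monotonicity claim for $(s_k)$ and the asymptotic inequality $\bigl(1-\beta/\alpha\bigr)\sum_{i=1}^k\log n_i\ge\beta\log(1/\delta_{k+1})-\log C$ both check out, the former using~\eqref{eq:delta_k condition} and $\delta_{k+1}\le1$, the latter from the index-shifted form of~\eqref{eq:limit}; the edge case $k=0$ is handled correctly.
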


\begin{proof}
Define $\Delta=\dim_H\left(\prod_{k=1}^\infty\{1,\ldots,n_k\},\rho_\alpha\right)$. The fact that $\Delta\le \alpha$ is simple. Indeed,  for $k\in \N$ consider the sets $\{B^k_x\}_{x\in \prod_{i=1}^k\{1,\ldots,n_i\}}$ given by
\begin{equation}\label{eq:def B_x}
B_x^k=\left(\prod_{i=1}^k \{x_i\}\right)\times\left(\prod_{i=k+1}^\infty\{1,\ldots,n_i\}\right).
\end{equation}
Then $\diam_{\rho_\alpha}(B_x^k)=\prod_{i=1}^k n_i^{-1/\alpha}$ and $\{B^k_x\}_{x\in \prod_{i=1}^k\{1,\ldots,n_i\}}$ cover $\prod_{i=1}^\infty\{1,\ldots,n_i\}$. Hence for $\beta>\alpha$ the $\beta$-Hausdorff content of $\left(\prod_{i=1}^\infty\{1,\ldots,n_i\},\rho_\alpha\right)$ can be estimated as follows.
$$
\mathcal{H}_\infty^\beta\left(\prod_{i=1}^\infty\{1,\ldots,n_i\},\rho_\alpha\right)\le \inf_{k\in \N} \sum_{x\in \prod_{i=1}^k\{1,\ldots,n_i\}}\frac{1}{\prod_{i=0}^kn_i^{\beta/\alpha}}=\inf_{k\in \N} \frac{1}{\prod_{i=0}^kn_i^{(\beta-\alpha)/\alpha}}=0.
$$
Thus $\Delta\le \alpha$.

We now pass to the proof of $\Delta\ge \alpha$. We first prove the following preliminary statement. Assume that $x^1,\ldots,x^m\in  \prod_{i=1}^\infty\{1,\ldots,n_i\}$ and $k_1,\ldots,k_m\in \N\cup \{0\}$ are such that $\{B_{x^j}^{k_j}\}_{j=1}^m$ cover $\prod_{i=1}^\infty\{1,\ldots,n_i\}$, where $B_x^k$ is given in~\eqref{eq:def B_x} and we use the convention $B_x^0=\prod_{i=1}^\infty\{1,\ldots,n_i\}$. We claim that this implies that
\begin{equation}\label{eq:prem}
\sum_{j=1}^m \frac{1}{\prod_{i=0}^{k_j}n_i}\ge 1.
\end{equation}
The proof is by induction on $m$. If $m=1$ then  $k_1=0$ and~\eqref{eq:prem} follows. Assume that $m\ge 2$, no subset of $\{B_{x^j}^{k_j}\}_{j=1}^m$ covers $\prod_{i=1}^\infty\{1,\ldots,n_i\}$, and that $k_1\le k_2\le\cdots\le k_m$. For every $y\in \{1,\ldots,n_{k_m}\}$ let $x^{k_m}(y)\in \prod_{i=1}^\infty\{1,\ldots,n_i\}$ have $y$ in the $k_m$'th coordinate, and coincide with $x^{k_m}$ in all other coordinates. The sets $\{B_{x^{k_m}(y)}^{k_m}\}_{y\in \{1,\ldots,n_{k_m}\}}$ are pairwise disjoint, and are either contained in or disjoint from $B_{x^j}^{k_j}$ for each $j\in \{1,\ldots,m\}$. Hence, by the minimality of the cover  $\{B_{x^j}^{k_j}\}_{j=1}^m$ we have  $k_m=k_{m-1}=\cdots =k_{m-n_{k_m}+1}$ and $\{B_{x^j}^{k_j}\}_{j=m-n_{k_m}+1}^{k_m}=\{B_{x^{k_m}(y)}^{k_m}\}_{y\in \{1,\ldots,n_{k_m}\}}$. This implies that
\begin{equation*}\label{remove last}
\sum_{j=1}^m \frac{1}{\prod_{i=0}^{k_j}n_i}=\sum_{j=1}^{m-n_{k_m}} \frac{1}{\prod_{i=0}^{k_j}n_i}+\sum_{y\in \{1,\ldots,n_{k_m}\}}\frac{1}{\prod_{i=0}^{k_{m}}n_i}
=\sum_{j=1}^{m-n_{k_m}} \frac{1}{\prod_{i=0}^{k_j}n_i}+\frac{1}{\prod_{i=0}^{k_{m}-1}n_i}.
\end{equation*}
The induction hypothesis applied to $\{B_{x^j}^{k_j}\}_{j=1}^{m-n_{k_m}}\cup\{B_{x^{m}}^{k_m-1}\}$ concludes the proof of~\eqref{eq:prem}.

Fix $\beta\in (0,\alpha)$. Due to~\eqref{eq:limit} there exists $C\in (0,\infty)$ such that for all $k\in \N$,
\begin{equation}\label{eq:use limit delta}
\frac{1}{\delta_k}\le  C\prod_{i=1}^{k-1} n_i^{(\alpha-\beta)/\alpha^2}.
\end{equation}
Let $\{B_{\rho_\alpha}(x^j,r_j)\}_{j\in J}$ be a family of balls that covers $\prod_{i=1}^\infty\{1,\ldots,n_i\}$. We will show that
\begin{equation}\label{eq:hausdorff lower}
\sum_{j\in J} r_j^\beta\ge \frac{1}{C^{\alpha}}.
\end{equation}
This would mean that $\mathcal{H}_\infty^\beta\left(\prod_{i=1}^\infty\{1,\ldots,n_i\},\rho_\alpha\right)>0$ for all $\beta\in (0,\alpha)$, proving that $\Delta\ge \alpha$. By compactness of $\left(\prod_{i=1}^\infty\{1,\ldots,n_i\},\rho_\alpha\right)$ it suffices to prove~\eqref{eq:hausdorff lower} when $J$ is finite. For every $j\in J$ choose $k_j\in \N$ such that $\prod_{i=0}^{k_j}n_i^{-1/\alpha}<r_j\le \prod_{i=0}^{k_j-1}n_i^{-1/\alpha}$. Define
\begin{equation}\label{eq:def r_j^*}
r^*_j=\left\{\begin{array}{ll} \prod_{i=1}^{k_j-1}n_i^{-1/\alpha}& \mathrm{if}\   \delta_{k_j}\prod_{i=1}^{k_j-1}n_i^{-1/\alpha}\le r_j\le\prod_{i=1}^{k_j-1}n_i^{-1/\alpha},\\
\prod_{i=1}^{k_j}n_i^{-1/\alpha}&\mathrm{if}\ \prod_{i=1}^{k_j}n_i^{-1/\alpha}<r_j<\delta_{k_j}\prod_{i=1}^{k_j-1}n_i^{-1/\alpha}.\end{array}\right.
\end{equation}
If $\delta_{k_j}\prod_{i=1}^{k_j-1}n_i^{-1/\alpha}\le r_j\le\prod_{i=1}^{k_j-1}n_i^{-1/\alpha}$ then $r_j^*\ge r_j$, hence $B_{\rho_\alpha}(x^j,r_j)\subseteq B_{\rho_\alpha}(x^j,r^*_j)=B_{x^j}^{k_j-1}$. Also, observe that $\rho_\alpha$ does not take values in the interval $\left(\prod_{i=1}^{k_j}n_i^{-1/\alpha},\delta_{k_j}\prod_{i=1}^{k_j-1}n_i^{-1/\alpha}\right)$. This implies that $B_{\rho_\alpha}(x^j,r_j)= B_{\rho_\alpha}(x^j,r^*_j)=B_{x^j}^{k_j}$ when $\prod_{i=1}^{k_j}n_i^{-1/\alpha}<r_j<\delta_{k_j}\prod_{i=1}^{k_j-1}n_i^{-1/\alpha}$. We deduce that the balls $B_{\rho_\alpha}(x^j,r^*_j)$ cover $\prod_{i=1}^\infty\{1,\ldots,n_i\}$, and they are all sets of the form $B_x^k$. It therefore follows from~\eqref{eq:prem} that
\begin{equation}\label{eq:hausdorff done}
1\le \sum_{j\in J} \left(r_j^*\right)^\alpha\stackrel{\eqref{eq:def r_j^*}}{\le} \sum_{j\in J} \left(\frac{r_j}{\delta_{k_j}}\right)^\alpha\stackrel{\eqref{eq:use limit delta}}{\le} C^\alpha \sum_{j\in J} r_j^\alpha\prod_{i=0}^{k_j-1}n_i^{(\alpha-\beta)/\alpha}\le C^\alpha\sum_{j\in J} r_j^\beta,
\end{equation}
where in the last inequality of~\eqref{eq:hausdorff done} we used the fact that $r_j\le \prod_{i=0}^{k_j-1}n_i^{-1/\alpha}$.
\end{proof}

\begin{remark}\label{rem:frostman}
A less direct way to prove the bound $\Delta\ge \alpha$ in Lemma~\ref{lem:compute hausdorff} is to define $\mu(B_x^k)=\prod_{i=0}^k n_i^{-1}$ and to argue that the Carath\'eodory extension theorem applies here and yields an extension of $\mu$ to a Borel measure on $\prod_{i=1}^\infty\{1,\ldots,n_i\}$. One can then show analogously to~\eqref{eq:hausdorff done} that this measure is a $\beta$-Frostman measure for $\left(\prod_{i=1}^\infty\{1,\ldots,n_i\},\rho_\alpha\right)$.
\end{remark}

In what follows we say that a property  $\P$ of metric spaces is a metric property if whenever $(X,d_X)\in P$ and $(Y,d_Y)$ is isometric to $(X,d_X)$ then also $(Y,d_Y)\in \P$. We say that $\P$ is hereditary if whenever $(X,d)\in \P$ and $Y\subseteq X$ then also $(Y,d)\in \P$. Finally, we say that $\P$ is dilation-invariant if whenever $(X,d)\in \P$ and $\lambda\in (0,\infty)$ also $(X,\lambda d)\in \P$.

\begin{theorem} \label{lem:ub-second-lem}
Fix $\alpha>0$. Fix also $\{n_k\}_{k=0}^\infty\subseteq \N$ with $n_0=1$ and $n_k>1$ for $k\ge 1$, a sequence $\{\delta_k\}_{k=1}^\infty\subseteq (0,\infty)$, and for each $k\in \N$ a metric $d_k$ on $\{1,\ldots,n_k\}$. Assume that~\eqref{eq:normalizations}, \eqref{eq:delta_k condition} and~\eqref{eq:limit} hold true. Then there exists a metric space $(Y,\rho)$ with $\dim_H(Y,\rho)=\alpha$ that satisfies the following property. Let $\{\P_k\}_{i=1}^\infty$ is a non-decreasing (with respect to inclusion) sequence of hereditary dilation-invariant metric properties and for every $k\in \N$ let $m_k$ be the cardinality of the largest subset $S$ of $\{1,\ldots,n_k\}$ such that $(S,d_k)$ has the property $\P_k$. Then every $Z\subseteq Y$ that has the property $\bigcup_{k=1}^\infty\P_k$ satisfies
$$
\dim_H(Z,\rho)\le \limsup_{k\to \infty} \frac{\alpha\log m_k}{\log n_k}.
$$
\end{theorem}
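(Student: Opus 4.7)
The plan is to take $(Y,\rho)=\bigl(\prod_{k=1}^\infty\{1,\ldots,n_k\},\rho_\alpha\bigr)$, the tree-of-metric-spaces construction set up just above the theorem. Lemmas~\ref{lem:is compact} and~\ref{lem:compute hausdorff} already furnish compactness and the identity $\dim_H(Y,\rho)=\alpha$, so the entire task reduces to producing the matching upper bound on $\dim_H(Z,\rho)$ for every $Z\subseteq Y$ that enjoys the property $\bigcup_{k=1}^\infty\P_k$. Such a $Z$ automatically possesses $\P_{k_0}$ for some $k_0$, hence $\P_k$ for every $k\ge k_0$ by monotonicity.

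The crux, which I expect to carry the proof, is to bound the ``branching'' of $Z$ by $m_k$ at every scale $k\ge k_0$. For $x=(x_1,\ldots,x_{k-1})\in\prod_{i=1}^{k-1}\{1,\ldots,n_i\}$ such that $Z\cap B_x^{k-1}\ne\emptyset$, let
$$S_x^k=\bigl\{i\in\{1,\ldots,n_k\}:\ \exists\, z\in Z\cap B_x^{k-1}\ \text{with}\ z_k=i\bigr\},$$
and for each $i\in S_x^k$ pick a representative $z^{(i)}\in Z\cap B_x^{k-1}$ with $z^{(i)}_k=i$. Any two such representatives agree on coordinates $1,\ldots,k-1$ and first split at coordinate $k$, so $k(z^{(i)},z^{(j)})=k$ and~\eqref{eq:def rho alpha} gives
$$\rho_\alpha\bigl(z^{(i)},z^{(j)}\bigr)=\frac{d_k(i,j)}{\prod_{\ell=0}^{k-1}n_\ell^{1/\alpha}}.$$
Thus $i\mapsto z^{(i)}$ exhibits $(S_x^k,d_k)$ as a uniform rescaling of a subset of $(Z,\rho_\alpha)$; heredity and dilation-invariance of $\P_k$ then force $(S_x^k,d_k)\in\P_k$, whence $|S_x^k|\le m_k$.

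Iterating scale by scale, for every $k\ge k_0$ the set $Z$ is covered by at most $\bigl(\prod_{j=1}^{k_0-1}n_j\bigr)\prod_{j=k_0}^k m_j$ cells of the form $B_x^k$, each of diameter $\prod_{j=1}^k n_j^{-1/\alpha}$. Given $\beta>\alpha\limsup_{k\to\infty}(\log m_k)/(\log n_k)$, pick $\e>0$ and $J\ge k_0$ so that $m_j\le n_j^{\beta/\alpha-\e}$ for all $j\ge J$. Then the sum of $(\diam B_x^k)^\beta$ over the cover is at most a constant (depending on $k_0,J,\beta$) times $\prod_{j=J}^k n_j^{-\e}$, which vanishes as $k\to\infty$ because $n_j\ge 2$ makes $\sum_j\log n_j$ diverge; the cover diameters themselves tend to $0$ for the same reason. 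This gives $\mathcal{H}_\infty^\beta(Z,\rho)=0$ and hence $\dim_H(Z,\rho)\le\beta$, and letting $\beta$ shrink to the limsup completes the argument. The only delicate point I foresee is the dilation claim underpinning $|S_x^k|\le m_k$: it depends crucially on the structure of $\rho_\alpha$, which ``forgets'' coordinates past the first disagreement, so that the induced distances among the chosen representatives are insensitive to the arbitrary choice of their later coordinates.
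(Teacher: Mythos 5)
Your proposal is correct and follows essentially the same path as the paper's proof: same choice of $(Y,\rho)=\bigl(\prod_k\{1,\ldots,n_k\},\rho_\alpha\bigr)$, same branching sets $S_x^k$ whose representatives realize a dilated copy of $(S_x^k,d_k)$ inside $Z$, the same inductive count $|Z_k|\le\bigl(\prod_{j<k_0}n_j\bigr)\prod_{j=k_0}^k m_j$, and the same Hausdorff-content estimate via the cylinder cover $\{B_x^k\}$. The only cosmetic difference is that you make the scaling formula for $\rho_\alpha$ explicit and phrase the final estimate with an $\e$-slack below $\beta/\alpha$ rather than the paper's midpoint $(\beta+\gamma)/(2\alpha)$; both are equivalent.
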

\begin{proof} Take $(Y,\rho)=\left(\prod_{k=1}^\infty \{1,\ldots,n_k\},\rho_\alpha\right)$, where $\rho_\alpha$ is given in~\eqref{eq:def rho alpha}. By Lemma~\ref{lem:compute hausdorff} and Lemma~\ref{lem:is compact} we know that $(Y,\rho)$ is a compact metric space of Hausdorff dimension $\alpha$.

Assume that $Z\subseteq Y$ and $(Z,\rho)\in \P_K$ for some $K\in \N$. Since $\{\P_k\}_{i=1}^\infty$ are non-decreasing properties, we know that $(Z,\rho)\in \P_k$ for all $k\ge K$. For every $x\in Y$ and $k\in \N$ denote
$$
S_x^k=\left\{j\in \{1,\ldots,n_{k}\}:\ Z\cap\left(\left(\prod_{i=1}^{k-1}\{x_i\}\right)\times \{j\}\times \left(\prod_{i=k+1}^\infty\{1,\ldots,n_i\}\right)\right)\neq\emptyset \right\}.
$$
If $j\in S_x^k$ choose $x^k(j)\in Z$ whose first $k-1$ coordinates coincide with the corresponding coordinates of $x$, and whose $k$'th coordinate equals $j$. Then $\{x^k(j)\}_{j\in S_x^k}$ is a subset of $Z$ whose metric is isometric to a dilation of the metric $d_k$ on $S_x^k$. Since $\P_k$ is a hereditary dilation-invariant metric property, it follows that $S_x^k\in \P_k$ for all $k\ge K$. Hence $|S_x^k|\le m_k$. Let $Z_k$ be the projection of the set $Z$ onto the first $k$ coordinates, i.e., the set of all $x\in \prod_{i=1}^k\{1,\ldots,n_i\}$ such that $B_x^k\cap Z\neq\emptyset$, where $B_x^k$ is given in~\eqref{eq:def B_x}. Then it follows by induction that for every $k\ge K$ we have $|Z_k|\le \prod_{i=0}^{K-1}n_i\cdot\prod_{i=K}^km_i$. Denote $\gamma=\limsup_{k\to \infty} \frac{\alpha\log m_k}{\log n_k}$. If $\beta>\gamma$ then there exists $K'\ge K$ such that for every $k\ge K'$ we have $m_k\le n_k^{(\beta+\gamma)/(2\alpha)}$. Since the sets $\{B_x^k\}_{x\in Z_k}$ cover $Z$ and have $\rho$-diameter $\prod_{i=0}^kn_i^{-1/\alpha}$,
\begin{multline*}
\mathcal{H}_\infty^\beta(Z,\rho)\le \inf_{k\ge K'} \sum_{x\in Z_k} \frac{1}{\prod_{i=0}^k n_i^{\beta/\alpha}}\le \inf_{k\ge K'} \frac{\prod_{i=0}^{K-1}n_i\cdot\prod_{i=K}^km_i}{\prod_{i=0}^k n_i^{\beta/\alpha}}\\\le
\inf_{k\ge K'} \frac{\prod_{i=0}^{K'-1}n_i\cdot\prod_{i=K'}^kn_i^{(\beta+\gamma)/(2\alpha)}}{\prod_{i=0}^k n_i^{\beta/\alpha}}\le
\inf_{k\ge K'} \frac{\prod_{i=0}^{K'-1}n_i}{\prod_{i=K'}^kn_i^{(\beta-\gamma)/(2\alpha)}}=0.
\end{multline*}
Hence $\dim_H(Z,\rho)\le \gamma$.
\end{proof}

\begin{corollary} \label{cor:ub-first-lem}
Fix an integer $n\ge 2$. Let $(X,d)$ be an $n$-point metric space, and assume that $\Psi\in (0,\infty)$ satisfies
$$
\Psi\cdot\min_{\substack{x,y\in X\\x\neq y}}d(x,y) > \diam(X).
$$
Then there exists a compact metric space $(Y,\rho)$ with $\dim_H(Y,\rho)=\log_\Psi n$ that has the following property. Fix $m\in \{1,\ldots,n-1\}$ and assume that $\P$ is a hereditary dilation-invariant metric property such that the largest subset of $X$ having the property $\P$ is of size $m$. Then $\dim_H(Z,\rho)\le \log_\Psi m$ for every $Z\subseteq Y$ with property $\P$.
\end{corollary}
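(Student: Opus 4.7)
The plan is to obtain Corollary~\ref{cor:ub-first-lem} as an immediate specialization of Theorem~\ref{lem:ub-second-lem} by taking every building block $(\{1,\ldots,n_k\},d_k)$ to be (an isometric copy of) the given finite metric space $(X,d)$, and by taking the ascending sequence of properties $\{\P_k\}$ to be constant and equal to $\P$.

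First I would perform the harmless rescaling $d \mapsto d/\diam(X)$, after which $\diam(X)=1$ and $\delta\eqdef\min_{x\ne y}d(x,y)>1/\Psi$, where the strict inequality comes from the hypothesis. Because $\P$ is a dilation-invariant metric property, the family of subsets of $X$ having property $\P$, and in particular the maximum cardinality $m$ of such a subset, is unaffected by the rescaling. I fix a bijection between $X$ and $\{1,\ldots,n\}$ and set $n_0=1$, $n_k=n$, $d_k=d$ and $\delta_k=\delta$ for every $k\ge 1$. Set $\alpha=\log_\Psi n$, so that $n_k^{1/\alpha}=\Psi$ for all $k\ge 1$.

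Next I would check the three hypotheses of Theorem~\ref{lem:ub-second-lem}. Condition~\eqref{eq:normalizations} holds by construction. For~\eqref{eq:delta_k condition}, we have $1/n_k^{1/\alpha}=1/\Psi<\delta=\delta_k$. For~\eqref{eq:limit}, $\log(1/\delta_k)/\sum_{i=1}^{k-1}\log n_i = \log(1/\delta)/((k-1)\log n)\to 0$. Theorem~\ref{lem:ub-second-lem} therefore supplies a (compact, by Lemma~\ref{lem:is compact}) metric space $(Y,\rho)$ of Hausdorff dimension exactly $\alpha=\log_\Psi n$.

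Finally, taking $\P_k\eqdef\P$ for every $k$ yields a non-decreasing sequence of hereditary dilation-invariant metric properties with $m_k=m$ for all $k$. The conclusion of Theorem~\ref{lem:ub-second-lem} then reads
\[
\dim_H(Z,\rho)\le \limsup_{k\to\infty}\frac{\alpha\log m_k}{\log n_k}=\frac{\log_\Psi n\cdot\log m}{\log n}=\log_\Psi m
\]
for every $Z\subseteq Y$ with property $\P$, which is exactly the desired bound. No genuine obstacle arises here; the only thing to be careful about is to record that the rescaling step is legitimate (this uses dilation-invariance of $\P$) and that identifying $X$ with $\{1,\ldots,n\}$ is legitimate (this uses that $\P$ is a metric property, i.e., invariant under isometry).
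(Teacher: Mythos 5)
Your proposal is correct and matches the paper's own proof, which is precisely the one-line specialization of Theorem~\ref{lem:ub-second-lem} to the constant sequences $X_i=X$, $n_i=n$, $\P_i=\P$ and $\alpha=\log_\Psi n$ after rescaling to unit diameter. The extra details you supply (verifying~\eqref{eq:normalizations}, \eqref{eq:delta_k condition}, \eqref{eq:limit}, and the role of dilation- and isometry-invariance of $\P$) are the same routine checks the paper leaves implicit.
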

\begin{proof}
By rescaling assume that $\diam(X)=1$. Now apply Theorem~\ref{lem:ub-second-lem} with  $X_i=X$, $n_i=n$, $\P_i=\P$ and $\alpha =\log_\Psi n$.
\end{proof}
\subsection{Expander fractals}\label{sec:expander} It is shown in~\cite{BLMN05} that there is $c\in (0,\infty)$ such that for any $n\in \N$ there exists an $n$-point metric space $X_n$ such that for every $\e\in(0,1)$ all the subsets of $X_n$ of cardinality greater than $n^{1-\e}$ incur distortion greater than $c/\e$ in any embedding into Hilbert space. In fact, the spaces $X_n$ are the shortest-path metrics on expander graphs, implying that $\diam X_n\le C\log n$ for some $C\in (0,\infty)$ and all $n\in \N$ (see~\cite{Chu89}). We will apply Corollary~\ref{cor:ub-first-lem} to these spaces, thus obtaining compact metric spaces that can be called ``expander fractals". The property $\P$ that will be used is ``$X$ embeds with distortion $c/\e$ into Hilbert space", which is clearly a hereditary dilation-invariant metric property.

\begin{proof}[Proof of the second part of Theorem~\ref{thm:hausdorff into}]
Let $c,C, X_n$ be as above. Fix $\alpha>0$ and choose an integer $n\ge 2$ such that $n^{1/\alpha}>C\log n$. We may therefore use Corollary~\ref{cor:ub-first-lem} with $X=X_n$ and $\Psi=n^{1/\alpha}$. The resulting compact metric space $(Y,\rho)$ will then have Hausdorff dimension equal to $\alpha$. For every $\e\in (0,1)$ let $\P_\e$ be the property ``$X$ embeds with distortion $c/\e$ into Hilbert space". Then all the subsets $Z$ of $Y$ that embed into Hilbert space with distortion $c/\e$ satisfy $\dim_H(Z,\rho)\le \log_\Psi \left(n^{1-\e}\right)=(1-\e)\alpha=(1-\e)\dim_H(Y,\rho)$.
\end{proof}

\subsection{$G\left(n,1/2\right)$ fractals}\label{sec:random graph}
It is shown in~\cite{BLMN05} that there exists $K\in (1,\infty)$ such that for any $n\in \N$ there exists an $n$-point metric space $W_n$  such that for every $\delta\in (0,1)$ any subset of $W_n$ of size larger than $2\log_2 n +K\left(\delta^{-2}\log(2/\delta)\right)^2$ must incur distortion at least $2-\delta$ when embedded into Hilbert space. The space $W_n$ comes from a random construction: consider a random graph $G$ on $n$ vertices, drawn from the Erd\H{o}s-Reyni model $G(n,1/2)$ (thus every edge is present independently with probability $1/2$). The space $W_n$ is obtained from $G$ by declaring two vertices that are joined by an edge to be at distance $1$, and two distinct vertices that are not joined by an edge are declared to be at distance $2$. Therefore the positive distances in $W_n$ are either $1$ or $2$. This description of $W_n$ is implicit in~\cite{BLMN05} but follows immediately from the proof of~\cite{BLMN05}; see~\cite{BLMN05-low} for an alternative proof of this fact (yielding a worse asymptotic dependence on $\delta$ that is immaterial for our purposes). We will apply Theorem~\ref{lem:ub-second-lem} to $\{W_n\}_{n=2}^\infty$, thus obtaining compact metric spaces that can be called ``$G(n,1/2)$ fractals".

\begin{proof}[Proof of Theorem~\ref{thm:<2}] Let $K$ and $\{W_n\}_{n=2}^\infty$ be as in the above discussion.  Set $X_n=W_{n+\lceil 3^\alpha \rceil}$. Hence $|X_n|^{1/\alpha}> 2$.
Let $\P_n$ be the property ``$X$ embeds with distortion $2-1/\log \log n$ into Hilbert space". Then $\{P_n\}_{n\ge 20}$ is a  non-decreasing sequence of hereditary dilation-invariant metric properties. Moreover, $\bigcup_{n\ge 20} \P_n$ is the property ``$X$ embeds into Hilbert space with distortion smaller than $2$". By the above discussion, letting $m_n$ be the size of the largest subset of $X_n$ that has the property $\P_n$, we have
$ m_n \le 2\log_2n+O\left((\log\log)^2(\log\log\log n)^2\right)$. Hence $\limsup_{n \to \infty} \frac{\alpha \log m_n}{\log |X_n|}=0$.
By Theorem~\ref{lem:ub-second-lem} it follows there exists a compact metric space $(Y,\rho)$ with Hausdorff dimension $\alpha$
such that all of its subsets with positive Hausdorff dimension do not have property $\bigcup_{n\ge 20} \P_n$, namely any embeding of such a subset into Hilbert space must incur distortion at least 2.
\end{proof}

\bigskip

\noindent{\bf Acknowledgements.} We are grateful to Terence Tao for sharing with us his initial attempts to solve Question~\ref{Q:tao}. We thank Tam\'as Keleti, Andr\'as M\'ath\'e and Ond\v{r}ej Zindulka for helpful comments. We are also grateful to an anonymous referee who suggested a reorganization of our proof so as to improve the exposition.

\bibliographystyle{abbrv}
\bibliography{hausdorff}

\def\cprime{$'$} \def\cprime{$'$}
\begin{thebibliography}{10}

\bibitem{ACP05}
G.~Alberti, M.~Cs{\"o}rnyei, and D.~Preiss.
\newblock Structure of null sets in the plane and applications.
\newblock In {\em European {C}ongress of {M}athematics}, pages 3--22. Eur.
  Math. Soc., Z\"urich, 2005.

\bibitem{AK00}
L.~Ambrosio and B.~Kirchheim.
\newblock Rectifiable sets in metric and {B}anach spaces.
\newblock {\em Math. Ann.}, 318(3):527--555, 2000.

\bibitem{BBM}
Y.~Bartal, B.~Bollob{\'a}s, and M.~Mendel.
\newblock Ramsey-type theorems for metric spaces with applications to online
  problems.
\newblock {\em J. Comput. System Sci.}, 72(5):890--921, 2006.

\bibitem{BBM06}
Y.~Bartal, B.~Bollob{\'a}s, and M.~Mendel.
\newblock Ramsey-type theorems for metric spaces with applications to online
  problems.
\newblock {\em J. Comput. System Sci.}, 72(5):890--921, 2006.

\bibitem{BLMN05}
Y.~Bartal, N.~Linial, M.~Mendel, and A.~Naor.
\newblock On metric {R}amsey-type phenomena.
\newblock {\em Ann. of Math. (2)}, 162(2):643--709, 2005.

\bibitem{BLMN05-low}
Y.~Bartal, N.~Linial, M.~Mendel, and A.~Naor.
\newblock Some low distortion metric {R}amsey problems.
\newblock {\em Discrete Comput. Geom.}, 33(1):27--41, 2005.

\bibitem{BL}
Y.~Benyamini and J.~Lindenstrauss.
\newblock {\em Geometric nonlinear functional analysis. {V}ol. 1}, volume~48 of
  {\em American Mathematical Society Colloquium Publications}.
\newblock American Mathematical Society, Providence, RI, 2000.

\bibitem{BKRS00}
A.~Blum, H.~Karloff, Y.~Rabani, and M.~Saks.
\newblock A decomposition theorem for task systems and bounds for randomized
  server problems.
\newblock {\em SIAM J. Comput.}, 30(5):1624--1661 (electronic), 2000.

\bibitem{BFM86}
J.~Bourgain, T.~Figiel, and V.~Milman.
\newblock On {H}ilbertian subsets of finite metric spaces.
\newblock {\em Israel J. Math.}, 55(2):147--152, 1986.

\bibitem{Burago}
D.~Burago, Y.~Burago, and S.~Ivanov.
\newblock {\em A course in metric geometry}, volume~33 of {\em Graduate Studies
  in Mathematics}.
\newblock American Mathematical Society, Providence, RI, 2001.

\bibitem{Car67}
L.~Carleson.
\newblock {\em Selected problems on exceptional sets}.
\newblock Van Nostrand Mathematical Studies, No. 13. D. Van Nostrand Co., Inc.,
  Princeton, N.J.-Toronto, Ont.-London, 1967.

\bibitem{Chu89}
F.~R.~K. Chung.
\newblock Diameters and eigenvalues.
\newblock {\em J. Amer. Math. Soc.}, 2(2):187--196, 1989.

\bibitem{Dvo60}
A.~Dvoretzky.
\newblock Some results on convex bodies and {B}anach spaces.
\newblock In {\em Proc. {I}nternat. {S}ympos. {L}inear {S}paces ({J}erusalem,
  1960)}, pages 123--160. Jerusalem Academic Press, Jerusalem, 1961.

\bibitem{Fun11}
K.~Funano.
\newblock Two infinite versions of nonlinear {D}voretzky's theorem, 2011.
\newblock Preprint available at \url{http://arxiv.org/abs/1111.1627}.

\bibitem{Gro53-dvo}
A.~Grothendieck.
\newblock Sur certaines classes de suites dans les espaces de {B}anach et le
  th\'eor\`eme de {D}voretzky-{R}ogers.
\newblock {\em Bol. Soc. Mat. S\~ao Paulo}, 8:81--110 (1956), 1953.

\bibitem{How95}
J.~D. Howroyd.
\newblock On dimension and on the existence of sets of finite positive
  {H}ausdorff measure.
\newblock {\em Proc. London Math. Soc. (3)}, 70(3):581--604, 1995.

\bibitem{Hug04}
B.~Hughes.
\newblock Trees and ultrametric spaces: a categorical equivalence.
\newblock {\em Adv. Math.}, 189(1):148--191, 2004.

\bibitem{KKR94}
H.~Karloff, Y.~Rabani, and Y.~Ravid.
\newblock Lower bounds for randomized {$k$}-server and motion-planning
  algorithms.
\newblock {\em SIAM J. Comput.}, 23(2):293--312, 1994.

\bibitem{Kel95}
T.~Keleti.
\newblock A peculiar set in the plane constructed by {V}itu\v skin, {I}vanov
  and {M}elnikov.
\newblock {\em Real Anal. Exchange}, 20(1):291--312, 1994/95.

\bibitem{KMZ12}
T.~Keleti, A.~M\'ath\'e, and O.~Zindulka.
\newblock Hausdorff dimension of metric spaces and {L}ipschitz maps onto cubes.
\newblock Preprint, available at \url{http://arxiv.org/abs/1203.0686}, 2012.

\bibitem{LN05}
J.~R. Lee and A.~Naor.
\newblock Extending {L}ipschitz functions via random metric partitions.
\newblock {\em Invent. Math.}, 160(1):59--95, 2005.

\bibitem{Mattila}
P.~Mattila.
\newblock {\em Geometry of sets and measures in {E}uclidean spaces}, volume~44
  of {\em Cambridge Studies in Advanced Mathematics}.
\newblock Cambridge University Press, Cambridge, 1995.
\newblock Fractals and rectifiability.

\bibitem{MN07}
M.~Mendel and A.~Naor.
\newblock Ramsey partitions and proximity data structures.
\newblock {\em J. Eur. Math. Soc.}, 9(2):253--275, 2007.

\bibitem{MN11}
M.~Mendel and A.~Naor.
\newblock Ultrametric skeletons.
\newblock Preprint, available at~\url{http://arxiv.org/abs/1112.3416}, 2011.

\bibitem{MS99}
V.~Milman and G.~Schechtman.
\newblock An ``isomorphic'' version of {D}voretzky's theorem. {II}.
\newblock In {\em Convex geometric analysis ({B}erkeley, {CA}, 1996)},
  volume~34 of {\em Math. Sci. Res. Inst. Publ.}, pages 159--164. Cambridge
  Univ. Press, Cambridge, 1999.

\bibitem{Mil71}
V.~D. Milman.
\newblock A new proof of {A}. {D}voretzky's theorem on cross-sections of convex
  bodies.
\newblock {\em Funkcional. Anal. i Prilo\v zen.}, 5(4):28--37, 1971.

\bibitem{MP10}
P.~M{\"o}rters and Y.~Peres.
\newblock {\em Brownian motion}.
\newblock Cambridge Series in Statistical and Probabilistic Mathematics.
  Cambridge University Press, Cambridge, 2010.
\newblock With an appendix by Oded Schramm and Wendelin Werner.

\bibitem{NT-fragmentations}
A.~Naor and T.~Tao.
\newblock Scale-oblivious metric fragmentation and the nonlinear {D}voretzky
  theorem, 2010.
\newblock Preprint available at \url{http://arxiv.org/abs/1003.4013}. To appear
  in Israel J. Math.

\bibitem{Sag94}
H.~Sagan.
\newblock {\em Space-filling curves}.
\newblock Universitext. Springer-Verlag, New York, 1994.

\bibitem{Sch06}
G.~Schechtman.
\newblock Two observations regarding embedding subsets of {E}uclidean spaces in
  normed spaces.
\newblock {\em Adv. Math.}, 200(1):125--135, 2006.

\bibitem{SVY09}
C.~Sommer, E.~Verbin, and W.~Yu.
\newblock Distance oracles for sparse graphs.
\newblock In {\em 2009 50th {A}nnual {IEEE} {S}ymposium on {F}oundations of
  {C}omputer {S}cience ({FOCS} 2009)}, pages 703--712. IEEE Computer Soc., Los
  Alamitos, CA, 2009.

\bibitem{Tal87}
M.~Talagrand.
\newblock Regularity of {G}aussian processes.
\newblock {\em Acta Math.}, 159(1-2):99--149, 1987.

\bibitem{Tal05}
M.~Talagrand.
\newblock {\em The generic chaining}.
\newblock Springer Monographs in Mathematics. Springer-Verlag, Berlin, 2005.
\newblock Upper and lower bounds of stochastic processes.

\bibitem{Tal11}
M.~Talagrand.
\newblock {\em Upper and Lower Bounds for Stochastic Processes}.
\newblock 2011.
\newblock Modern Methods and Classical Problems. Forthcoming book.

\bibitem{TZ05}
M.~Thorup and U.~Zwick.
\newblock Approximate distance oracles.
\newblock {\em J. ACM}, 52(1):1--24 (electronic), 2005.

\bibitem{Urb09}
M.~Urba{\'n}ski.
\newblock Transfinite {H}ausdorff dimension.
\newblock {\em Topology Appl.}, 156(17):2762--2771, 2009.

\bibitem{VT79}
I.~A. Vestfrid and A.~F. Timan.
\newblock A universality property of {H}ilbert spaces.
\newblock {\em Dokl. Akad. Nauk SSSR}, 246(3):528--530, 1979.

\bibitem{VIM63}
A.~G. Vitu{\v{s}}kin, L.~D. Ivanov, and M.~S. Mel{\cprime}nikov.
\newblock Incommensurability of the minimal linear measure with the length of a
  set.
\newblock {\em Dokl. Akad. Nauk SSSR}, 151:1256--1259, 1963.

\bibitem{Wul12}
C.~Wulff-{N}ilsen.
\newblock Approximate distance oracles with improved query time.
\newblock Preprint, available at~\url{http://arxiv.org/abs/1202.2336}, 2011.

\end{thebibliography}

\end{document}